\def\ps@pprintTitle{%
 \let\@oddhead\@empty
 \let\@evenhead\@empty
 \def\@oddfoot{}%
 \let\@evenfoot\@oddfoot}
\newtheoremstyle{style}   
 {0.5cm}                 
 {0.5cm}                 
  {}                         
  {}                         
  {\normalfont\bfseries}  
  {\normalfont }{   }
  {}
\newtheorem{thm}{Theorem}
\newtheorem{satz}[thm]{Theorem}
\newtheorem{Proposition}[thm]{Proposition}
\theoremstyle{remark}
\newtheorem{remark}{Remark}
\newcommand{\ph}{\varphi}
\newcommand{\R}{\mathbb{R}}
\newcommand{\C}{\mathbb{C}}
\newcommand{\De}{\Delta_{p,q}}
\newcommand{\Pe}{\mathcal{P}}
\newcommand{\Q}{\mathcal{Q}}
\begin{document}

\begin{frontmatter}

\title{{Charged Conformal Killing Spinors}}


\author{Andree Lischewski}

\address{Department of Mathematics, Humboldt University, Rudower Chausse 25, 12489 Berlin, Germany}
\date{today}
\begin{abstract}
We study the twistor equation on pseudo-Riemannian $Spin^c-$manifolds whose solutions we call charged conformal Killing spinors (CCKS). We derive several integrability conditions for the existence of CCKS and study their relations to spinor bilinears. A construction principle for Lorentzian manifolds admitting CCKS with nontrivial charge starting from CR-geometry is presented. We obtain a partial classification result in the Lorentzian case under the additional assumption that the associated Dirac current is normal conformal and complete the Classification of manifolds admitting CCKS in all dimensions and signatures $\leq 5$ which has recently been initiated in the study of supersymmetric field theories on curved space.
\end{abstract}

\begin{keyword}
twistor spinors \sep $Spin^c$-Geometry \sep conformal Killing forms 
\MSC[2010] 53B30 \sep 53A30 \sep 53C27 \sep 15A66  

\end{keyword}
\ead{lischews@math.hu-berlin.de}

\end{frontmatter}
\tableofcontents
\section{Introduction}
The study of pseudo-Riemannian geometries admitting symmetries or conformal symmetries is a classical problem in differential geometry, cf. \cite{kr1,kr2,kr3,dude}. The spinorial analogue leads to the determination of manifolds on which certain spinor field equations can be solved. The pseudo-Riemannian Berger list opens up a way to distinguish the holonomy groups of irreducible geometries admitting parallel spinors, see \cite{bk}. Furthermore, Lorentzian manifolds with special holonomy admitting parallel spinors or pseudo-Riemannian geometries with parallel pure spinor fields have been studied intensively in \cite{ldr,bll,bv6,kath}. A list of local normal forms of the metric is known in low dimension, see \cite{br}.\\
Generalizing the concept of a parallel spinor, the spinorial analogue of Killing vector fields leads to (geometric) Killing spinors which -at least in the Riemannian and Lorentzian case- haven been well-studied in \cite{bfkg,boh,kath,leiks} and many construction principles are known. Interest in these objects arose independently from the fact that as shown in \cite{fr1} on a a compact Riemannian spin manifold the eigenspinors to the minimal possible eigenvalue of the Dirac operator are Killing spinors. Moreover, \cite{baer} relates Killing spinors to parallel spinors on the cone. It is natural to consider a generalization of this problem to conformal geometry giving rise to the study of conformal Killing spinors, or twistor spinors. They lie in the kernel of a natural differential operator acting on spinor bundles which can be interpreted as being complementary to the spin Dirac operator. Local geometries admitting twistor spinors have been intensively studied in \cite{bfkg,bl,lei,leihabil,kr4,kr5,ha96} for the Riemannian and Lorentzian case. However, also the study of the twistor equation in higher signatures is of interest as indicated in \cite{hs1,hs,lis}. Among other aspects it leads to a spinorial characterization of 5-manifolds admitting generic 2-distributions and to new construction principles for projective structures. Twistor spinors square to conformal vector fields with the special additional property that they insert trivially into the Weyl-and Cotton tensor, see \cite{bl,raj,med} for which the term normal conformal vector field has become standard in the literature. A generalization of this property to differential forms has been studied in \cite{nc,sem}, leading to new classification results for pseudo-Riemannian decomposable conformal holonomy, cf. \cite{baju,al}.\\
The study of these spinor field equations has also been motivated by progress in the understanding of physical theories with supergravity and vice versa. For instance, Riemannian manifolds admitting parallel or Killing spinors allow one to place certain supersymmetric Yang Mills theories on them, see \cite{jf5,ym}. In physics, the twistor equation first appeared in \cite{pen}. Moreover, the generalized Killing spinor equations appearing in the Freund-Rubin product ansatz for 11-dimensional supergravity (cf. \cite{of12}) lead to conformal Killing spinor equations on the factors. Recently, one has started to place and study some supersymmetric Minkowski-space theories on curved space which may lead to new insights in the computation of observables, see \cite{pes,ym,fes1,CCKS1,CCKS2,CCKS3}. Requiring that the deformed theory on curved space preserves some (rigid) supersymmetry (cf. \cite{fes1}) again leads to generalized Killing spinor equations. Interestingly, one finds in \cite{CCKS1,CCKS2,CCKS3} for different theories and signatures, namely Euclidean and Lorentzian 3-and 4 manifolds the same type of spinorial equation, namely a $Spin^c$-analogue of the twistor spinor equation whose solutions have been named charged conformal Killing spinors (CCKS). As shown in these references, one obtains this twistor equation also by using the AdS/CFT-correspondence and studying the gravitino-variation near the conformal boundary.\\
\newline
In order to put these local results into a more global mathematical context, consider a space- and time-oriented, connected pseudo-Riemannian $Spin^c$ manifold $(M,g)$ of signature $(p,q)$ with underlying $S^1$-principal bundle $\Pe_1$. One can canonically associate to this setting  the complex spinor bundle $S^g$ with its Clifford multiplication, denoted by $\mu : TM \times S^g \rightarrow S^g$. If moreover a connection $A$ on $\Pe_1$ is given, there is a canonically induced covariant derivative $\nabla^{A}$ on $S^g$. Besides the Dirac operator $D^A$, there is another conformally covariant differential operator acting on spinor fields, obtained by performing the spinor covariant derivative $\nabla^{A}$ followed by orthogonal projection onto the kernel of Clifford multiplication,
\[ P^A : \Gamma(S^g)  \stackrel{\nabla^{A}}{\rightarrow} \Gamma(T^*M \otimes S^g ) \stackrel{g}{\cong} \Gamma(TM \otimes S^g)  \stackrel{\text{proj}_{\text{ker}\mu}}{\rightarrow} \Gamma(\text{ker }\mu), \]
called the $Spin^c$-twistor operator. Elements of its kernel are precisely CCKSs and they are equivalently characterized as solutions of the conformally covariant $Spin^c$-twistor equation
\begin{align}
\nabla^{A}_X \ph + \frac{1}{n} X \cdot D^A \ph = 0 \text{    for all } X \in \mathfrak{X}(M). \label{tatar}
\end{align}
This article is devoted to the study of the twistor equation on $Spin^c$-manifolds. From a physics perspective, the motivation for its study lies in the determination of geometries in dimensions 3 and 4 on which supersymmetric field theories can be placed. In these signatures, (\ref{tatar}) has been solved locally in \cite{CCKS1,CCKS2,CCKS3}. However, there are compelling additional purely geometric reasons for the study of (\ref{tatar}). First, it is a natural generalization of $Spin^c$-parallel and Killing spinors which have been investigated in \cite{mor}. Their study has lead to new spinorial characterizations of Sasakian and pseudo-K\"ahler structures. Generalizations of the $Spin^c-$Killing spinor equations have appeared in \cite{gross}. Moreover, CCKS might lead to equivalent characterizations of manifolds admitting certain conformal Killing forms: Given a CCKS $\ph$, one can always form its associated Dirac current $V_{\ph}$. In the $Spin-$case, i.e. $dA=0$, $V_{\ph}$ is always a causal, normal conformal vector field. However, for Lorentzian 3-manifolds it has been shown in \cite{CCKS2} that for every non spacelike conformal vector field $V$ there is a CCKS $\ph$ wrt. a generically non-flat connection $A$ such that $V=V_{\ph}$. The same holds on Lorentzian 4-manifolds for lightlike conformal vector fields: \cite{lei} shows that for $(M^{3,1},g)$ a non-conformally flat Lorentzian manifold admitting a null normal conformal vector field $V$ without zeroes such that its twist $V^{\flat} \wedge dV^{\flat}$ vanishes everywhere or nowhere on $M$, there exists locally a real twistor spinor $\ph \in \Gamma(S^g)$ such that $V_{\ph} = V$. 
In view of this, we ask whether the existence of a generic null conformal vector field on $(M^{3,1},g)$ which is not necessarily normal conformal can be characterized in terms of spinor fields, which has been answered in  \cite{CCKS1}. Namely for $V$ a null conformal vector field without zeroes on a Lorentzian manifold $(M^{3,1},g)$ there exists locally a connection $A$ and a CCKS $\ph \in \Gamma(S^g)$ wrt. $A$ such that $V=V_{\ph}$.
We want to investigate whether this principle carries over also to other signatures. This would lead to spinorial characterizations of manifold admitting certain conformal symmetries via CCKS.
Consequently, the study of supersymmetric theories on curved space as well as the natural generalization of already studied $Spin^c$ spinor field equations together with the question of what the spinorial analogue of conformal, not necessarily normal conformal vector fields might be in low dimensions motivates the study of the twistor equation on pseudo-Riemannian $Spin^c$-manifolds in full generality.\\
\newline
This article starts with the investigation of basic properties of the $Spin^c$-twistor operator. In particular, its conformal covariance is revealed where the $S^1-$bundle data remain unchanged. It is straightforward to derive integrability conditions in section \ref{ikon} relating the conformal Weyl curvature tensor $W^g$ to the curvature $dA$ of the $S^1$-connection. Moreover, we study forms and vector fields associated to a CCKS via a natural squaring map as known from \cite{bl,nc,cortes}. They turn out to provide examples for conformal, not necessarily normal conformal Killing forms. In section \ref{crg} we then ask for global construction principles of Lorentzian manifolds admitting solutions of the CCKS equation. Here, one is motivated by some known constructions: Every pseudo-Riemannian Ricci-flat K\"ahler spin manifold admits (at least) 2 parallel spinors, see \cite{bk}. Given a K\"ahler manifold equipped with its canonical $Spin^c$-structure and the $S^1$-connection $A$ canonically induced by the Levi-Civita connection, \cite{mor} shows that there is (generically) one $Spin^c$-parallel spinor wrt. $A$ and $dA=0$ iff the manifold is Ricci flat. Passing to conformal geometry, it is known that Fefferman spin spaces over strictly pseudoconvex manifolds (cf. \cite{bafe,baju,leihabil}) can be viewed as the Lorentzian and conformal analogue of Riemannian K\"ahler manifolds and that they always admit 2 conformal Killing spinors. Further properties of these distinguished twistor spinors can even be used to characterize Fefferman spin spaces. This construction is presented in detail in \cite{bafe} and from a conformal holonomy point of view in \cite{baju,leihabil}. In view of this, it is natural to conjecture that there is a conformal $Spin^c$-analogue. Indeed, we find in Theorem \ref{feff} that every Fefferman space $(F^{2n+2},h_{\theta})$ over a strictly pseudoconvex manifold $(M^{2n+1},H,J,\theta)$ admits a canonical $Spin^c$-structure and a natural $S^1$-connection $A$ on the auxiliary bundle induced by the Tanaka Webster connection on $M$ such that there exists a CCKS on $F$. Under additional natural assumptions we show in Theorem \ref{converse} that also the converse direction is true. Thus, one obtains a characterization of Fefferman spaces in terms of $Spin^c$-spinor equations. It seems very natural to characterize Fefferman spaces in terms of distinguished $Spin^c$-spinor fields as every Fefferman space over a strictly pseudoconvex CR-manifold admits a natural $Spin^c$-structure. For the classical characterization from \cite{bafe,baju} in terms of ordinary twistor spinors, one has to restrict to the class of Fefferman spin-spaces.\\
Further, we obtain in section \ref{pcr} a classification of local Lorentzian geometries admitting CCKS under the additional assumption that the associated conformal vector field is normal conformal in Theorem \ref{clar}. Hereby, we use known results about conformal holonomy for Lorentzian geometries as known from \cite{leihabil,lcc,al,baju}. \\
Finally, we complete in section \ref{lodi} the classification of local geometries admitting solutions to (\ref{tatar}) which has been initiated in \cite{CCKS1,CCKS2,CCKS3}. Hereby, our study of the $Spin^c-$twistor equation on Lorentzian 5-manifolds leads to an equivalent spinorial characterization of geometries admitting Killing 2-forms of a certain causal type in Theorem \ref{satz3}. It is straightforward to obtain similar results in signature $(3,2)$. For signatures $(2,2)$ we find in Theorem \ref{22} that CCKS of nonzero length equivalently characterize the local existence of pseudo-K{\"a}hler metrics in the conformal class. Considering an irreducible Riemannian 5-manifold, we deduce in Theorem \ref{50} that locally there exists a CCKS if and only if there is a Sasakian (not necessarily Einstein-Sasakian) structure in the conformal class. 


\section{$Spin^c$-Geometry and the twistor operator} \label{srty}
\subsection*{$Spin^c(p,q)$-groups and spinor representations}


For the algebraic background we follow \cite{ba81,fr,lm}. Consider $\R^{p,q}$, that is, $\R^n$, where $n=p+q$, equipped with the scalar product $\langle \cdot, \cdot \rangle_{p,q}$ of index $p$, given by $\langle e_i, e_j \rangle_{p,q} = \epsilon_i \delta_{ij}$, where $(e_1,...,e_n)$ denotes the standard basis of $\R^n$ and $\epsilon_{i \leq p} = -1$, $\epsilon_{i > p} = 1$  Let $e_i^{\flat}:=\langle e_i, \cdot \rangle_{p,q} \in \left(\R^{p,q}\right)^*$. We denote by $Cl_{p,q}$ the Clifford algebra of $(\R^{n},- \langle \cdot, \cdot \rangle_{p,q})$ and by $Cl_{p,q}^{\C}$ its complexification. It is the associative real or complex algebra with unit multiplicatively generated by $(e_1,...,e_n)$ with the relations $e_ie_j+e_je_i=-2 \langle e_i,e_j \rangle_{p,q}$.
The representation theory of $Cl(p,q)$ and  ${Cl}^{\C}_{p,q}$ is well-known from \cite{lm,har}.
In particular, using the action of the volume element $\omega_{\R}:= e_1 \cdot....\cdot e_{n}  \in Cl_{p,q}$ resp. $\omega_{\C}:= (-i)^{\left[\frac{n+1}{2}\right]-p} \omega_{\R} \in Cl^{\C}_{p,q}$, cf. \cite{lm}, there is a way to distinguish a up to equivalence unique real resp. complex irreducible representation for all Clifford algebras $Cl_{p,q}$ and $Cl_{p,q}^{\C}$. 

\begin{remark} \label{ds}
Let $E,T,g_1$ and $g_2$ denote the $2 \times 2$ matrices
\begin{align*} 
E = \begin{pmatrix} 1 & 0 \\ 0 & 1 \end{pmatrix} \text{ , } T = \begin{pmatrix} 0 & -i \\ i & 0 \end{pmatrix} \text{ , } U = \begin{pmatrix} i & 0 \\ 0 & -i \end{pmatrix} \text{ , } V = \begin{pmatrix} 0 & i \\ i & 0 \end{pmatrix}.
\end{align*}
Furthermore, let $\tau_j =\begin{cases}  1 &  \epsilon_j = 1, \\ i & \epsilon_j = -1. \end{cases}$.
Let $n=2m$. In this case, ${Cl}^{\C}(p,q) \cong M_{2^m}(\C)$ as complex algebras, and an explicit realisation of this isomorphism is given by
\begin{align*}
\Phi_{p,q} (e_{2j-1})&= \tau_{2j-1} E \otimes...\otimes E \otimes U \otimes T \otimes...\otimes T,\\
\Phi_{p,q} (e_{2j})  &= \tau_{2j} E \otimes...\otimes E \otimes V \otimes \underbrace{T \otimes...\otimes T}_{(j-1) \times}.
\end{align*}
For $n=2m+1$ and $q>0$ an isomorphism $\widetilde{\Phi}_{p,q} : {Cl}^{\C}({p,q}) \rightarrow M_{2^m}(\C) \oplus M_{2^m}(\C)$ is given by
\begin{align*}
\widetilde{\Phi}_{p,q} (e_j) &= (\Phi_{p,q-1}(e_j),\Phi_{p,q-1}(e_j)) \text{,  } j=1,...,2m ,\\
\widetilde{\Phi}_{p,q} (e_{2m+1}) & = \tau_{2m+1} (iT \otimes...\otimes T, -iT \otimes...\otimes T),
\end{align*}
and $\Phi_{p,q}:=pr_1 \circ \widetilde{\Phi}_{p,q}$ is an irreducible representation mapping $\omega_{\C}$ to $Id$.
\end{remark}

The Clifford group contains $Spin^+(p,q)$, the identity component of the spin group, as well as the unit circle $S^1 \subset \C$ as subgroups. Together they generate the group $Spin^c(p,q)$ and since $S^1 \cap Spin^+(p,q) = \{ \pm1 \}$, we have $Spin^c(p,q) = Spin^+(p,q) \cdot S^1 = Spin^+(p,q)\times_{\mathbb{Z}_2} S^1$.
$Spin^c(p,q)$ has various algebraic relations to other groups, see \cite{fr}: Let $\lambda:Spin^+(p,q) \rightarrow SO^+(p,q)$ denote the two-fold covering of the special orthogonal group. There are natural maps
\begin{align*}
\lambda^c:Spin^c(p,q) &\rightarrow SO^+(p,q),\text{ } [g,z]\mapsto\lambda(g), \\
\zeta:Spin^c(p,q) &\rightarrow SO^+(p,q) \times S^1,\text{ }[g,z]\mapsto (\lambda(g),z^2),
\end{align*}
where $\zeta$ is a 2-fold covering. The Lie algebras of $Spin^+(p,q)$ and $Spin^c(p,q)$ are given by $\mathfrak{spin}(p,q)= \{ e_i \cdot e_j \mid 1 \leq i < j \leq n \}$ and $\mathfrak{spin}^c(p,q) = \mathfrak{spin}(p,q) \oplus i \R$. $\zeta_*$ turns out to be a Lie algebra isomorphism, given by $\zeta_*(e_i \cdot e_j, it) = (2E_{ij},2it)$, where $E_{ij} = -\epsilon_j D_{ij} + \epsilon_{i} D_{ji}$ for the standard basis $D_{ij}$ of $\mathfrak{gl}(n,\R)$. Finally, for $(p,q)=(2p',2q')$, the group $Spin^c(p,q)$ is related to the group $U(p',q')$ of pseudo-unitary matrices as follows: Let $\iota : \mathfrak{gl}(m,\C) \hookrightarrow \mathfrak{gl}(2m,\R)$ denote the natural inclusion and define $F:U(p',q') \rightarrow SO(p,q) \times S^1$ by $f(A) = (\iota A, \text{det }A)$. Then there is exactly one group homomorphism $l:U(p',q') \rightarrow Spin^c(p,q)$ such that
\begin{align*} \zeta \circ l = F \end{align*}


For $n=2m$ or $n=2m+1$, fixing an irreducible complex representation $\rho:{Cl}^{\C}_{p,q} \rightarrow \text{End}\left(\De^{\C} \right)$ on the space of spinors $\De:=\De^{\C} = \C^{2^m}$, for instance $\rho = \Phi$ from Remark \ref{ds}, and restricting it to $Spin^{c}(p,q) \subset Cl^{\C}_{p,q}$ yields the complex spinor representation
\[ \rho: Spin^c(p,q) \rightarrow \text{End}\left(\De^{\C} \right), \text{ }\rho([g,z])(v)=z \cdot \rho(g)(v)=:z \cdot g \cdot v. \]
In case $n$ odd, this yields an irreducible representation of $Spin^c(p,q)$, whereas in case $n=2m$ even $\Delta_{p,q}^{\C}$ splits into the sum of two inequivalent $Spin^c(p,q)$ representations $\De^{\C,\pm}$ according to the $\pm 1$ eigenspaces of $\omega$ (cf. \cite{har,ba81}). In our realisation from Remark \ref{ds} one can find these half spinor modules as follows (cf. \cite{ba81}):
Let us denote by $u(\delta) \in \C^2$ the vector $u(\delta)= \frac{1}{\sqrt{2}} \begin{pmatrix}1 \\ -\delta i \end{pmatrix}, \delta = \pm 1,$ and set $u(\delta_1,...,\delta_m):=u(\delta_1) \otimes...\otimes u(\delta_m)$ for $\delta_{j} = \pm1$. Then $\Delta^{\C,\pm}_{p,q} = \text{span} \{ u(\delta_1,...,\delta_m) \mid \prod_{j = 1}^m \delta_{j} = \pm 1 \}$.\\
As $\R^n \subset Cl_{p,q} \subset Cl^{\C}_{p,q}$, $\rho$ also defines the Clifford multiplication $(X,\ph) \mapsto  X \cdot \ph := \rho(X)(\ph)$ of a vector by a spinor. Further, due to the canonical vector space isomorphism $Cl(p,q) \cong \Lambda^*_{p,q} := \Lambda^* \left(\R^{p,q} \right)^*$, forms act on the spinor module in a natural way.
We consider the Hermitian inner product $\langle \cdot, \cdot \rangle_{\De^{\C}}$ on the spinor module $\De^{\C} = \C^{2^m}$ given by
\[ \langle u, v \rangle_{\De^{\C}} = d \cdot (e_{1} \cdot...\cdot e_{p} \cdot u, v )_{\C}, \]
where $d$ is some power of $i$ depending on $p,q$ and the concrete realisation of the representation only and $( \cdot, \cdot )_{\C}$ is the standard Hermitian inner product on $\C^{2^m}$. In the realisation from Remark \ref{ds} we take $d=i^{p(p-1)/2}$. If $p,q >0$, $\langle \cdot, \cdot \rangle_{\De^{\C}}$ has neutral signature and it holds that 
\begin{align*} \langle X \cdot u, v \rangle_{\De^{\C}} + (-1)^p \langle u, X \cdot v \rangle_{\De^{\C}} = 0\end{align*} for all $u,v \in \De^{\C}$ and $X \in \R^n$. In particular, $\langle \cdot, \cdot \rangle_{\De^{\C}}$ is invariant under $Spin^c(p,q)$.\\ 
Moreover, bilinears can be constructed out of spinors generalizing the well-known Dirac current from the Lorentzian case. Concretely, we associate to spinors $\chi_{1,2} \in \De$ a series of forms $\alpha_{\chi_1,\chi_2}^k \in \Lambda^k_{p,q}$, $k \in \mathbb{N}$, given by 
\begin{align}
\langle \alpha_{\chi_1,\chi_2}^k,\alpha \rangle_{p,q} := d_{k,p} \left( \langle \alpha \cdot \chi_1, \chi_2 \rangle_{\Delta_{p,q}} \right) \textit{  } \forall \alpha \in \Lambda^k_{p,q}. \label{6}
\end{align}
$d_{k,p} \in \{Re, Im\}$depends on the chosen representation but not on $\chi$, ensures that the so defined form is indeed a real form. We set $\alpha_{\chi}^k:= \alpha_{\chi,\chi}^k$ In more invariant notation these forms arise in even dimension as the image of a pair of spinors under the map
\[\Delta \otimes \Delta \stackrel{\langle \cdot, \cdot \rangle_{\Delta}}{\rightarrow} \text{End} (\Delta) \cong Cl^{\C}(p,q) \cong \left(\Lambda^*_{p,q}\right)^{\C} \rightarrow \Lambda^k(p,q). \]
and the following properties are easily checked:
\begin{Proposition} \label{10}
Let $\chi \in \De$ and $k \in \mathbb{N}$.
\begin{enumerate}
\item $\alpha^p_{\chi}=0 \Leftrightarrow \chi = 0$,
\item  $\alpha_{\chi}^k =  \sum_{1 \leq i_1 < i_2 <...<i_k \leq n} \epsilon_{i_1}...\epsilon_{i_k} d_{k,p}  \left(\langle e_{i_1}\cdot...e_{i_k}\cdot \chi, \chi  \rangle_{\Delta_{p,q}} \right) e^{\flat}_{i_1} \wedge...\wedge e^{\flat}_{i_k}$,
\item Equivariance: $\alpha^k_{z\cdot g \cdot \chi } = \lambda(g) (\alpha^k_{\chi})$ for all $k \in \mathbb{N}$, $z \cdot g \in Spin^c(p,q)$ and $\chi \in \Delta_{p,q}$.
\end{enumerate}
\end{Proposition}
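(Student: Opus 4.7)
The plan is to unpack definition (\ref{6}) and reduce each claim to an explicit computation in the Clifford algebra; no deep structure is needed beyond the identification $Cl(p,q)\cong \Lambda^{*}_{p,q}$, the $Spin^c(p,q)$-invariance of $\langle\cdot,\cdot\rangle_{\Delta_{p,q}}$ recorded above, and the positive definiteness of $(\cdot,\cdot)_{\C}$.

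For part~(2), I would expand $\alpha^{k}_{\chi}=\sum_{i_{1}<\cdots<i_{k}} c_{i_{1}\ldots i_{k}}\,e^{\flat}_{i_{1}}\wedge\cdots\wedge e^{\flat}_{i_{k}}$ in the standard basis of $\Lambda^{k}_{p,q}$ and read off the coefficients by evaluating (\ref{6}) on a single basis element $e^{\flat}_{j_{1}}\wedge\cdots\wedge e^{\flat}_{j_{k}}$. Since the inner product on $\Lambda^{k}_{p,q}$ is diagonal in this basis with diagonal entries $\epsilon_{j_{1}}\cdots\epsilon_{j_{k}}$, and the Clifford identification sends $e^{\flat}_{j_{1}}\wedge\cdots\wedge e^{\flat}_{j_{k}}$ to $e_{j_{1}}\cdot\ldots\cdot e_{j_{k}}$, one obtains $c_{j_{1}\ldots j_{k}}\,\epsilon_{j_{1}}\cdots\epsilon_{j_{k}} = d_{k,p}(\langle e_{j_{1}}\cdot\ldots\cdot e_{j_{k}}\cdot\chi,\chi\rangle_{\Delta_{p,q}})$, and the claimed formula follows after multiplying by $\epsilon_{j_{1}}\cdots\epsilon_{j_{k}}$ and using $\epsilon_{j}^{2}=1$.

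For part~(3), the equivariance splits into the $S^{1}$ and the $Spin^{+}(p,q)$ contributions of $Spin^c(p,q)$. Rescaling $\chi$ by $z\in S^{1}$ multiplies $\langle\alpha\cdot\chi,\chi\rangle_{\Delta_{p,q}}$ by $|z|^{2}=1$, giving $S^{1}$-invariance. For $g\in Spin^{+}(p,q)$, the defining identity $g\cdot v\cdot g^{-1}=\lambda(g)(v)$ for $v\in\R^{p,q}$ extends through the Clifford isomorphism to $g\cdot\alpha\cdot g^{-1}=\lambda(g)(\alpha)$ for every $\alpha\in\Lambda^{*}_{p,q}$, so combining with the $Spin^c$-invariance of $\langle\cdot,\cdot\rangle_{\Delta_{p,q}}$ yields
\begin{align*}
\langle\alpha\cdot g\chi,g\chi\rangle_{\Delta_{p,q}} = \langle g^{-1}\cdot\alpha\cdot g\cdot\chi,\chi\rangle_{\Delta_{p,q}} = \langle\lambda(g^{-1})(\alpha)\cdot\chi,\chi\rangle_{\Delta_{p,q}}.
\end{align*}
Applying $d_{k,p}$ and using that $\lambda(g)$ is an isometry for $\langle\cdot,\cdot\rangle_{p,q}$ converts this directly into the equivariance identity $\alpha^{k}_{g\chi}=\lambda(g)(\alpha^{k}_{\chi})$.

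Part~(1) is the step that requires the only genuine computation. I would test (\ref{6}) against the distinguished form $\alpha:=e^{\flat}_{1}\wedge\cdots\wedge e^{\flat}_{p}$, whose Clifford action is, up to sign, multiplication by $e_{1}\cdot\ldots\cdot e_{p}$. Using $e_{i}^{2}=-\epsilon_{i}=1$ for $i\leq p$ and a direct rearrangement, $(e_{1}\cdot\ldots\cdot e_{p})^{2}=(-1)^{p(p-1)/2}$. Substituting into $\langle u,v\rangle_{\Delta_{p,q}}=i^{p(p-1)/2}\cdot(e_{1}\cdot\ldots\cdot e_{p}\cdot u,v)_{\C}$ gives $\langle e_{1}\cdot\ldots\cdot e_{p}\cdot\chi,\chi\rangle_{\Delta_{p,q}}=i^{3p(p-1)/2}(\chi,\chi)_{\C}$, a nonzero power of $i$ times the positive-definite quantity $(\chi,\chi)_{\C}$. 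Since $d_{p,p}\in\{Re,Im\}$ is chosen precisely so that $\alpha^{p}_{\chi}$ is a (nonzero, real) form, the coefficient of $e^{\flat}_{1}\wedge\cdots\wedge e^{\flat}_{p}$ in $\alpha^{p}_{\chi}$ is a nonzero real multiple of $(\chi,\chi)_{\C}$, whence $\alpha^{p}_{\chi}=0\Rightarrow\chi=0$; the converse is immediate. The only real obstacle in the whole proposition is this phase bookkeeping in~(1), involving the powers of $i$ in $d$, in $(e_{1}\cdot\ldots\cdot e_{p})^{2}$, and in the sign conventions of the Clifford isomorphism; parts~(2) and~(3) are essentially formal.
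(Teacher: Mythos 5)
The paper offers no proof of this proposition (it is dismissed as "easily checked"), and your direct verification from definition (\ref{6}) — reading off coefficients in the orthogonal basis for part (2), splitting the $Spin^c$-action into its $S^1$- and $Spin^+$-factors and conjugating $\alpha$ through the invariant inner product for part (3), and testing against $e^{\flat}_1\wedge\cdots\wedge e^{\flat}_p$ for part (1) — is correct and is precisely the computation the paper intends. The one point to make explicit in part (1) is that $d_{p,p}$ is the element of $\{Re,Im\}$ matching the Hermiticity type of Clifford multiplication by $e_1\cdot\ldots\cdot e_p$ (which follows from the displayed relation $\langle X\cdot u,v\rangle_{\De^{\C}}+(-1)^p\langle u,X\cdot v\rangle_{\De^{\C}}=0$, giving the sign $(-1)^{p(p-1)/2}$ and hence the same parity as your phase $i^{3p(p-1)/2}$), so that the chosen $d_{p,p}$ really does pick up the nonzero part of $i^{3p(p-1)/2}(\chi,\chi)_{\C}$ rather than annihilating it; your wording "chosen so that $\alpha^p_{\chi}$ is a (nonzero, real) form" assumes this compatibility rather than checking it, but it is a one-line consequence of the stated adjointness relation.
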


\subsection*{$Spin^c$-structures and spinor bundles} \label{tes}

Let $(M,g)$ be a space-and time-oriented, connected pseudo-Riemannian manifold of  index $p$ and dimension $n=p+q \geq 3$. By $\Pe^g$ we denote the $SO^+(p,q)$-principal bundle of all space-and time-oriented pseudo-orthonormal frames $s=(s_1,...,s_n)$. A $Spin^c$-structure  of $(M,g)$ is given by the data $\left( \Q^c, \Pe_1, f^c \right)$, where $\Pe_1$ is a $S^1$-principal bundle over $M$, $\Q^c$ is a $Spin^c(p,q)$-principal bundle over $M$ which together with $f^c : \Q^c \rightarrow \Pe^g {\times} \Pe_1$ defines a $\zeta-$reduction of the product $SO^+(p,q) \times S^1$-bundle $\Pe^g {\times} \Pe_1$ to $Spin^c(p,q)$.
Existence and uniqueness of $Spin^c-$structures is discussed elsewhere, see \cite{lm}. We will from now on assume that $(M,g)$ admits a $Spin^c-$structure (which is locally always guaranteed) and assume that this structure is fixed.\\
Given a $Spin^c$-manifold, the associated bundle $S^g:=\mathcal{Q}^c \times_{Spin^c(p,q)} \De^{\C}$ is called the \textit{complex spinor bundle} and its elements, i.e. classes are denoted by ${u,v}$. In case $n$ even , it holds that $S^g=S^{g,+} \oplus S^{g,-}$, as $\De=\De^+ \oplus \De^-$. Sections, i.e. elements of $\Gamma(M,S^{g,(\pm)})$ are called (half-)spinor fields.
The algebraic objects introduced in the last section define fibrewise Clifford multiplication $\mu : \Omega^*(M) \otimes S^g \rightarrow S^g$ and an Hermitian inner product $\langle \cdot , \cdot \rangle_{S^g}$ with analogous fibrewise properties.. Moreover, pointwise applying the construction of spinor bilinears (\ref{6}) leads to series of differential forms $\Gamma(M,S^{g}) \otimes \Gamma(M,S^{g}) \rightarrow \Omega^k(M)$ associated to a pair of spinor fields. Dualizing this for $k=1$, leads to the well-known Dirac current $V_{\ph} \in \mathfrak{X}(M)$, cf. \cite{lei,bl}.

Let $\omega^g\in \Omega^1 \left(\Pe^g,\mathfrak{so}(p,q) \right)$ denote the Levi Civita connection $\nabla^g$ on $(M,g)$, considered as a bundle connection. Moreover, fix a connection $A \in \Omega^1 \left(\Pe_1,i\R \right)$ in the $S^1$ bundle. Together, they form a connection $\omega^g \times A$ on $\Pe^g \times \Pe_1$, which lifts to $\widetilde{\omega^g \times A}:= \zeta_*^{-1} \circ (\omega^g \times A) \circ df^c \in \Omega^1 \left(\Q^c,\mathfrak{spin}^c(p,q) \right)$. The covariant derivative $\nabla^{A}$ on $S^g$ induced by this connection can locally be described as follows: Let $\ph \in \Gamma(S^g)$ be locally given by $\ph_{|U} = \left[\widetilde{s \times e},v \right]$, where $s \in \Gamma(U,\Pe^g), e \in \Gamma(U,\Pe_1)$ and $\widetilde{s \times e}$ is a lifting to $\Gamma(U,\Q^c)$. Denoting $A^e:=e^*A \in \Omega^1(U,i\R)$, we have for $X \in TU$:
\begin{align}
\nabla^{A}_X \ph_{|U} = [\widetilde{s \times e},  X(v) + \frac{1}{2} \sum_{1 \leq k < l \leq n} \underbrace{\epsilon_k \epsilon_l g(\nabla^g_X s_k,s_l)}_{=:\omega_{kl}(X)} e_k \cdot e_l \cdot v + \frac{1}{2} A^e(X) \cdot v ]  \label{lofo}
\end{align}

The inclusion of a $S^1$-connection $A$ in the construction of this covariant derivative gauges the natural $S^1$-action on $S^g$ in the following sense: Let $f=e^{2i\sigma}:M \rightarrow S^1$ be a smooth function. Then we have by (\ref{lofo}) that
\begin{align}
\nabla_X^A (f \cdot \ph) = 2i d\sigma(X) \cdot f \cdot \ph + f \cdot \nabla_X^A \ph = f \cdot \nabla_X^{A+i d\sigma} \ph \label{gauge}
\end{align}
It is moreover known from \cite{fr} that for all $X,Y \in \mathfrak{X}(M)$ and $\ph,\psi \in \Gamma(S^g)$ we have
\begin{align*}
\nabla^A_X (Y \cdot \ph) &= \nabla^g_X Y \cdot \ph + Y \cdot \nabla_X^A \ph, \\
X \langle \ph, \psi \rangle_{S^g} &=\langle \nabla_X^A \ph, \psi \rangle_{S^g} + \langle \ph, \nabla^A_X \psi \rangle_{S^g}.
\end{align*}
Let $dA$ denote the curvature form of $A$, seen as element of $\Omega^2(M,i\R)$. Let $R^A$ denote the curvature tensor of $\nabla^A$ and $R^g : \Lambda^2(TM) \rightarrow \Lambda^2(TM)$ the curvature tensor of $(M,g)$. These quantities are related by
\begin{equation} \label{cur}
\begin{aligned}
R^A(X,Y)\ph &= \frac{1}{2} R^g(X,Y)\cdot \ph + \frac{1}{2} dA \cdot \ph\text{, }\sum_i \epsilon_i s_i \cdot R^A(s_i,X) \ph = \frac{1}{2}Ric(X) \cdot \ph - \frac{1}{2} (X \invneg dA) \cdot \ph 
\end{aligned}
\end{equation}

\begin{remark} \label{drr}
Every spin-manifold is canonically $Spin^c$ with trivial auxiliary bundle. Moreover, if one takes for $A$ the canonically flat connection on $M \times S^1$ in this situation, then $\nabla^A$ corresponds to the connection on $S^g$ induced by the Levi Civita connection, see \cite{mor}.\\
For $(M,g)$ a manifold which admits a $U(p',q') \hookrightarrow SO^+(p,q)$ reduction $(\Pe_U,h:\Pe_U \rightarrow \Pe^g)$ of its frame bundle, the bundles $(\Q^c:=\Pe_U \times_l Spin^c(p,q), \Pe_1:= \Pe_U \times_{\text{det}} S^1)$ together with the map
\[ f^c: \Q^c \rightarrow \Pe^g \times \Pe_1, \text{ }[q,zg]_l \mapsto ([q,\lambda(g)],[q,z^2]) \]
define a $Spin^c(p,q)$ structure on $M$. In this situation, there are natural reduction maps
\begin{align}
\phi_c : \Pe_U &\rightarrow \Q^c\text{, }p \mapsto [p,1]_l \\
\phi_1: \Pe_U &\rightarrow \Pe_1\text{, } p \mapsto [p,1]_{det}.
\end{align}
Moreover, local sections in $\Q^c$ can be obtained as follows: Let $s \in \Gamma(U,\Pe_U)$ be a local section. Then we have that $\phi_c(s) \in \Gamma(U,\Q^c)$ and 
\begin{align}f^c(\phi_c (s)) = s \times e\text{, where }e=\phi_1(s). \label{sec} \end{align}
\end{remark}

\subsection*{Basic properties of charged conformal Killing spinors}

Given a pseudo-Riemannian $Spin^c$-manifold $(M,g)$ together with a connection $A$ on the underlying $S^1$-bundle there are naturally associated differential operators: For the Dirac operator $D^A:= \mu \circ \nabla^A : \Gamma(S^g) \rightarrow \Gamma(S^g)$ the Schroeder-Lichnerowicz formula (cf. \cite{fr}) gives 
\begin{align}
D^{A,2} \ph = \Delta^A \ph + \frac{R}{4} \ph + \frac{1}{2} dA \cdot \ph, \label{slr}
\end{align}
where $\Delta_A \ph = - \sum_i \epsilon_i \left(\nabla^A_{s_i} \nabla^A_{s_i} \ph - \text{div}(s_i) \nabla^A_{s_i} \ph \right)$ and $R$ is the scalar curvature of $(M,g)$. The complementary $Spin^c$-twistor operator $P^A$ is obtained by performing the spinor covariant derivative $\nabla^{A}$ followed by orthogonal projection onto the kernel of Clifford multiplication,
\[ P^A : \Gamma(S^g)  \stackrel{\nabla^{A}}{\rightarrow} \Gamma(T^*M \otimes S^g ) \stackrel{g}{\cong} \Gamma(TM \otimes S^g)  \stackrel{\text{proj}_{\text{ker}\mu}}{\rightarrow} \Gamma(\text{ker} \mu). \]
Spinor fields $\ph \in \text{ker }P^A$ are called $Spin^c$-twistor spinors. A local calculation shows that they are equivalently characterized as solutions of the twistor equation
\[\nabla^{A}_X \ph + \frac{1}{n} X \cdot D^A \ph = 0 \text{    for all } X \in \mathfrak{X}(M). \]
Following the conventions in \cite{CCKS1,CCKS2,CCKS3}, we shall call $Spin^c$-twistor spinors \textit{charged conformal Killing spinors} and abbreviate them by \textit{CCKS}.\\
\newline
In analogy to the $Spin$-case, CCKS are objects of conformal $Spin^c-$geometry: Let $f^c_g: \Q^c_g \rightarrow \Pe_+^g \times \Pe_1$ be a $Spin^c(p,q)$-structure for $(M,g)$ and let $\widetilde{g}=e^{2 \sigma}g$ be a conformally equivalent metric. As in the case of spin structures (cf. \cite{ba81,bfkg}), there exists a canonically induced $Spin^c-$structure $f^c_{\widetilde{g}}: \Q_{\widetilde{g}}^c \rightarrow \Pe_+^{\widetilde{g}} \times \Pe_1$ and a $Spin^c(p,q)$-equivariant map $\widetilde{\phi}_{\sigma} : \mathcal{Q}^c_g \rightarrow \mathcal{Q}^c_{\widetilde{g}}$ such that the diagram 
\begin{align*}
\begin{xy}
  \xymatrix{
      \mathcal{Q}^c_g  \ar[r]^{\widetilde{\phi}_{\sigma}} \ar[d]_{f^c_g}    &   \mathcal{Q}^c_{\widetilde{g}} \ar[d]^{f^c_{\widetilde{g}}}  \\
      \Pe_+^g \times \Pe_1 \ar[r]_{\phi_{\sigma}}             &   \Pe_+^{\widetilde{g}} \times \Pe_1   
  }
\end{xy}
\end{align*}
commutes, where $\phi_{\sigma}((s_1,...,s_n),e) = \left(\left(e^{-\sigma}s_1,...,e^{-\sigma}s_n \right),e \right)$. We obtain identifications
\begin{align*}
\begin{array}{llllllll} 
\widetilde{}:S^g & \rightarrow & S^{\widetilde{g}}, & \ph&=&[\widehat{q},v] & \mapsto & [\widetilde{\phi}_{\sigma} (\widehat{q}),v ] = \widetilde{\ph}, \\
\widetilde{}: TM & \rightarrow &TM, & X&=&[q,x] & \mapsto & [\phi_{\sigma} (q), x] = e^{- \sigma} X,
\end{array}
\end{align*}
where the second map is an isometry wrt. $g$ and $\widetilde{g}$. With these identifications, the covariant derivative $\nabla^A$ on the spinor bundle, the Dirac operator and the twistor operator transform as
\begin{align*}
\nabla^{A,\widetilde{g}}_{\widetilde{X}} \widetilde{\varphi} &= e^{-\sigma} \widetilde{\nabla^{A,g}_X \varphi} - \frac{1}{2} e^{-2 \sigma} (X \cdot \text{grad}^g(e^{\sigma}) \cdot \varphi + g(X,\text{grad}^g(e^{\sigma})) \cdot \varphi) \widetilde{},\\
D^{A,\widetilde{g}} \widetilde{\varphi} &= e^{-\frac{n+1}{2}\sigma} \left( D^{A,g}(e^{\frac{n-1}{2}\sigma} \varphi) \right) \widetilde{},\\
P^{A,\widetilde{g}} \widetilde{\varphi} &= e^{-\frac{\sigma}{2}} \left( P^{A,g}(e^{-\frac{\sigma}{2}} \varphi) \right) \widetilde{}. 
\end{align*}
Thus, $ P^{A,g}$ is conformally covariant and $\ph \in \text{ker }P^{A,g}$ iff $e^{\sigma/2}\widetilde{\ph} \in \text{ker }P^{A,\widetilde{g}}$. The $S^1$-bundle data, and in particular $A$,  are unaffected by the conformal change. However, applying (\ref{gauge}) directly yields the following additional $S^1-$gauge invariance of the CCKS-equation:

\begin{Proposition} \label{huuk}
Let $\ph \in \text{ker }P^{A,g}$ and $f=e^{i \tau /2} \in C^{\infty}(M,S^1)$. Then $f\ph \in \text{ker }P^{A-i d\tau,g}$ and $D^{A-i d \tau} (f \ph) = f D^A \ph$. 
\end{Proposition}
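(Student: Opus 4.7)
The plan is to reduce both claims to a direct application of the gauge-invariance relation (\ref{gauge}), together with the elementary fact that a complex scalar function on $M$ commutes with Clifford multiplication by a tangent vector.

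First, I would apply (\ref{gauge}) with the phase $\tau/2$ in place of $2\sigma$ (matching $f=e^{i\tau/2}$ with the form $e^{2i\sigma}$ appearing there, and tracking the resulting shift of the $i\R$-valued connection). This yields the pointwise identity
\[
\nabla^{A-id\tau}_X (f\ph) \;=\; f \cdot \nabla^{A}_X \ph \qquad \text{for every } X \in \mathfrak{X}(M).
\]
This is the only non-trivial input; everything else is formal. Clifford multiplying this identity by a local pseudo-orthonormal frame $(s_i)$, weighting by $\epsilon_i$, summing, and then pulling the scalar $f$ across the Clifford product immediately gives
\[
D^{A-id\tau}(f\ph) \;=\; \sum_i \epsilon_i\, s_i \cdot \nabla^{A-id\tau}_{s_i}(f\ph) \;=\; f \sum_i \epsilon_i\, s_i \cdot \nabla^{A}_{s_i}\ph \;=\; f \cdot D^{A}\ph,
\]
which is the second assertion.

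Plugging both identities into the CCKS equation for $f\ph$ with respect to the shifted connection, I obtain
\[
\nabla^{A-id\tau}_X(f\ph) + \frac{1}{n} X \cdot D^{A-id\tau}(f\ph) \;=\; f \cdot \nabla^{A}_X\ph + \frac{1}{n} X \cdot f \cdot D^{A}\ph \;=\; f \cdot \Bigl(\nabla^{A}_X\ph + \frac{1}{n} X \cdot D^{A}\ph \Bigr) \;=\; 0,
\]
the last step because $\ph \in \ker P^{A,g}$. This gives $f\ph \in \ker P^{A-id\tau,g}$ and completes the argument.

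The proof is essentially bookkeeping, so I do not anticipate a genuine obstacle; the only care required is to correctly match the factors of $2$ and of $\tfrac{1}{2}$ arising from the parametrization $f=e^{2i\sigma}$ in (\ref{gauge}) and from the Lie-algebra lift $\zeta_*^{-1}$ used to define $\nabla^{A}$ in (\ref{lofo}), so that the shifted connection appears correctly as $A - id\tau$.
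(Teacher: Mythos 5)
Your argument is correct and is essentially the paper's own proof: the paper derives Proposition \ref{huuk} precisely by "applying (\ref{gauge}) directly", i.e.\ the pointwise identity $\nabla^{A-id\tau}_X(f\ph)=f\,\nabla^A_X\ph$ (which follows from the local formula (\ref{lofo}), since the $\tfrac12(-id\tau)(X)$ term cancels $X(e^{i\tau/2})e^{-i\tau/2}=\tfrac{i}{2}d\tau(X)$), followed by the same formal steps for $D^{A-id\tau}$ and the twistor equation. Your closing caution about the factors of $2$ is well placed, but as long as you verify the key identity against (\ref{lofo}) (rather than the literal normalization in (\ref{gauge})), the proof goes through exactly as you describe.
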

Consequently, the data needed to define CCKSs are in fact a conformal manifold $(M,[g])$, where we require that $(M,g)$ is $Spin^c$ for one - and hence for all - $g \in c$, and a gauge equivalence class of $S^1$-connections in the underlying $S^1$-bundle $\Pe_1$.

\begin{Proposition} \label{tg}
The following hold for $\ph \in \text{ker }P^{A,g}$:
\begin{align}
D^{A,2} \ph &= \frac{n}{n-1} \left( \frac{R}{4}\ph + \frac{1}{2}dA \cdot \ph \right),\label{1p} \\
\nabla_X^A D^A \ph &= \frac{n}{2} \left(K^g(X) + \frac{1}{n-2} \cdot \left(\frac{1}{n-1} X \cdot dA +X \invneg dA \right) \right) \cdot \ph. \label{2p}
\end{align} 
Here, $K^g:= \frac{1}{n-2} \cdot \left(Ric - \frac{R}{2(n-1)}g \right)$ denotes the Schouten tensor.
\end{Proposition}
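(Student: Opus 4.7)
The plan is to establish both identities by combining the twistor equation $\nabla^A_X\ph = -\frac{1}{n}X\cdot D^A\ph$ with the Schrödinger–Lichnerowicz formula (\ref{slr}) and the Ricci-type contraction (\ref{cur}).

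For (\ref{1p}) I would first compute $\Delta^A\ph$ for a CCKS by plugging in the twistor equation. Working at a point in a local pseudo-orthonormal synchronous frame $(s_1,\dots,s_n)$ (so that $\nabla^g_{s_i}s_j = 0$ and $\mathrm{div}(s_i)=0$ there), I get
$$\Delta^A\ph \;=\; -\sum_i\epsilon_i\nabla^A_{s_i}\nabla^A_{s_i}\ph \;=\; \frac{1}{n}\sum_i\epsilon_i\,s_i\cdot\nabla^A_{s_i}D^A\ph \;=\; \frac{1}{n}D^{A,2}\ph.$$
Substituting this into (\ref{slr}) and solving the resulting identity $\tfrac{n-1}{n}D^{A,2}\ph = \tfrac{R}{4}\ph + \tfrac{1}{2}dA\cdot\ph$ yields (\ref{1p}).

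For (\ref{2p}) the idea is to compute $R^A(X,Y)\ph$ in two independent ways. Differentiating the twistor equation,
$$\nabla^A_X\nabla^A_Y\ph \;=\; -\tfrac{1}{n}\bigl(\nabla^g_X Y\cdot D^A\ph + Y\cdot\nabla^A_X D^A\ph\bigr),$$
and using the torsion-freeness of $\nabla^g$ to cancel all $D^A\ph$-terms leaves
$$R^A(X,Y)\ph \;=\; \tfrac{1}{n}\bigl(X\cdot\nabla^A_Y D^A\ph - Y\cdot\nabla^A_X D^A\ph\bigr).$$
Now I would take the Ricci-type trace $\sum_i\epsilon_i s_i\cdot R^A(s_i,X)\ph$. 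On one side, the contraction identity in (\ref{cur}) gives it in terms of $\mathrm{Ric}(X)\cdot\ph$ and $(X\invneg dA)\cdot\ph$. On the other side, rearranging via the Clifford relation $s_i\cdot X + X\cdot s_i = -2g(s_i,X)$, together with $\sum_i\epsilon_i s_i\cdot s_i = -n$, $\sum_i\epsilon_i g(s_i,X)\nabla^A_{s_i}=\nabla^A_X$ and $\sum_i\epsilon_i s_i\cdot\nabla^A_{s_i}D^A\ph = D^{A,2}\ph$, produces an expression of the form
$$\tfrac{1}{n}X\cdot D^{A,2}\ph \;+\; \tfrac{2-n}{n}\,\nabla^A_X D^A\ph.$$

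Equating the two expressions for the trace and isolating $\nabla^A_X D^A\ph$ gives a formula in $\mathrm{Ric}(X)\cdot\ph$, $(X\invneg dA)\cdot\ph$ and $X\cdot D^{A,2}\ph$. Substituting (\ref{1p}) for $D^{A,2}\ph$ converts $X\cdot D^{A,2}\ph$ into a combination of $R\,X\cdot\ph$ and $X\cdot dA\cdot\ph$; collecting the scalar- and Ricci-curvature contributions reveals the Schouten combination $\tfrac{1}{n-2}\bigl(\mathrm{Ric}(X)-\tfrac{R}{2(n-1)}X\bigr)=K^g(X)$, while the remaining $dA$-terms organize precisely as $\tfrac{1}{n-2}\bigl(\tfrac{1}{n-1}X\cdot dA + X\invneg dA\bigr)$, which is exactly (\ref{2p}).

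The routine part is the Clifford bookkeeping. The real subtlety I expect is tracking sign conventions carefully — in particular the sign in the Ricci contraction (\ref{cur}), the order in which $s_i$ and $X$ are Clifford-multiplied, and the sign of $\sum_i\epsilon_i s_i\cdot s_i$ in indefinite signature — since a misplaced sign at any of these steps would misplace $\mathrm{Ric}$ relative to $R$ and destroy the Schouten recombination at the end.
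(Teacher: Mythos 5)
Your proposal is correct and follows essentially the same route as the paper: (\ref{1p}) by taking the Clifford trace of the twistor operator and feeding the result into the Schr\"odinger--Lichnerowicz formula (\ref{slr}), and (\ref{2p}) by computing $R^A(X,Y)\ph$ from the differentiated twistor equation, contracting against the Ricci-type identity in (\ref{cur}), substituting (\ref{1p}), and solving for $\nabla^A_X D^A\ph$. Your closing remark about sign conventions is apt, since that is exactly where the bookkeeping (the order of Clifford factors and the sign in (\ref{cur})) must be done with the paper's conventions, but the argument itself is the paper's.
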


\textit{Proof. }All calculations are carried out at a fixed point $x \in M$. Let $(s_1,...,s_n)$ be a pseudo-orthonormal frame which is parallel in $x$. We have at $x$:
\begin{align*}
- \Delta^A \ph + \frac{1}{n} D^{A,2} \ph = \sum_i \epsilon_i \nabla_{s_i}^A \left(\nabla^A_{s_i} \ph + \frac{1}{n} s_i \cdot D^A \ph \right) = 0,
\end{align*}
and thus by (\ref{slr}) $\frac{1}{n}D^{A,2} \ph = \Delta^A \ph = D^{A,2} \ph - \frac{R}{4}\ph - \frac{1}{2} dA \cdot \ph$, from which (\ref{1p}) follows. To prove (\ref{2p}), note that the twistor equation yields $R^A(X,s_i)\ph = -\frac{1}{n} \left( s_i \nabla_X^A D^A\ph - X \cdot \nabla_{s_i}^A D^A \ph \right)$, for $X$ a vector field which is parallel in $x$. Inserting this into (\ref{cur}) implies that
\begin{align*}
Ric(X) \cdot \ph &= \frac{2}{n} (2-n) \nabla_X^A D^{A} \ph + \frac{2}{n} X \cdot D^{A,2} \ph + (X \invneg dA) \cdot \ph \\
&= \frac{2}{n} (2-n) \nabla_X^A D^{A} \ph + \frac{R}{2(n-1)} X \cdot \ph + \frac{1}{n-1}X \cdot dA \cdot \ph + (X \invneg dA) \cdot \ph.
\end{align*}
Solving for $\nabla_X^A D^A \ph$ yields the claim.
$\hfill \Box$\\
\newline
Proposition \ref{tg} leads to an equivalent characterization of CCKS. To this end, consider the bundle $E^g := S^g\oplus S^g$ together with the covariant derivative
\begin{align}
\nabla^{E^g,A}_X \begin{pmatrix}\ph \\ \psi \end{pmatrix}:= \begin{pmatrix} \nabla_X^A \ph + \frac{1}{n} X \cdot \psi \\ \nabla_X^A \psi -\frac{n}{2} \left(K^g(X) + \frac{1}{n-2} \cdot \left(\frac{1}{n-1} X \cdot dA +X \invneg dA \right) \right) \cdot \ph \end{pmatrix}. \label{dat}
\end{align}
Obviously, $\ph \in \text{ker }P^A$ implies that $\nabla^{E^g,A} \begin{pmatrix}\ph \\ D^A \ph \end{pmatrix} = 0$, and on the other hand, if $\nabla^{E^g,A} \begin{pmatrix}\ph \\ \psi \end{pmatrix} = 0$, then $\ph \in \text{ker }P^A$ and $\psi = D^A \ph$. It follows as in the $Spin$-case that for a nontrivial CCKS the spinors $\ph$ and $D^A \ph$ never vanish at the same point and dim ker $P^A\leq 2^{\left\lfloor n/2 \right\rfloor+1}$.

\begin{remark}
The appearance of the $dA$-term in (\ref{dat}) hinders a conformally invariant interpretation of CCKS in terms of parallel spin tractors associated to a parabolic Cartan geometry as known for $Spin-$twistor spinors from \cite{leihabil,baju}.
\end{remark}

\section{Integrability conditions and spinor bilinears} \label{ikon}
We obtain integrability conditions for the existence of CCKS by computing the curvature operator $R^{\nabla^{E^g,A}}$ which has to vanish when applied to $(\ph, D^A \ph)^T$, where $\ph \in \text{ker }P^{A,g}$. Let $pr_{1,2}$ denote the projections onto the corresponding summands of $E^g$. We calculate:

\begin{align*}
pr_1 \left(R^{\nabla^{E^g,A}}(X,Y) \begin{pmatrix} \ph \\ \psi \end{pmatrix} \right) =& \frac{1}{2} \left( R^g(X,Y) -X \cdot K^g(Y) + Y \cdot K^g(X) \right)\cdot  \ph + \frac{1}{2} dA(X,Y) \cdot \ph \\
&- \frac{1}{2(n-2)} \left( \frac{1}{n-1} (X \cdot Y-Y \cdot X)\cdot dA + (X \cdot (Y \invneg dA) - Y \cdot (X \invneg dA))\right) \cdot \ph \end{align*}
With the definition of the Weyl tensor $W^g$ and using the identities
\begin{equation} \label{clid}
\begin{aligned}
X \cdot \omega &= X^{\flat} \wedge \omega - X \invneg \omega, \\
\omega \cdot X &= (-1)^{k} \left(X^{\flat} \wedge \omega + X \invneg \omega \right),
\end{aligned}
\end{equation}
where $X$ is a vector and $\omega$ a $k-$form, we obtain the integrability condition
\begin{equation} \label{int1}
\begin{aligned}
0=& \frac{1}{2} \cdot W^g(X,Y) \cdot \ph + \left(\frac{n-3}{2(n-1)} \cdot dA(X,Y) -\frac{1}{(n-2)(n-1)} \cdot X^{\flat} \wedge Y^{\flat} \wedge dA \right) \cdot \ph \\
&+ \frac{1}{n-2} \left(\frac{1}{n-1}-\frac{1}{2} \right)\cdot \left( X^{\flat} \wedge (Y \invneg dA) - Y^{\flat} \wedge (X \invneg dA) \right)  \cdot \ph.
\end{aligned}
\end{equation}
In particular, ker $P^A$ being of maximal possible dimension implies $W^g=0$ and $dA=0$. The integrability condition resulting from $pr_2 \left(R^{\nabla^{E^g,A}}(X,Y) \begin{pmatrix} \ph \\ D^A \ph \end{pmatrix} \right) =0$ is with the same formulas and the Cotton-York tensor $C^g(X,Y)=(\nabla_X^g K^g)(Y)-(\nabla_Y^g K^g)(X)$, straightforwardly calculated to be
\begin{align*}
0 =& \frac{1}{2}W^g(X,Y) \cdot D^A \ph + \frac{n}{2}C(X,Y) \cdot \ph -\frac{n}{2}\frac{1}{(n-2)(n-1)} \left( Y^{\flat} \wedge \nabla_X dA - X^{\flat} \wedge \nabla_Y dA \right) \\
&-\frac{n}{2(n-1)} \left(g(\nabla_X dA,Y) - g(\nabla_Y dA,X) \right) \cdot \ph - (\frac{1}{(n-2)(n-1)} X^{\flat} \wedge Y^{\flat} \wedge dA + \frac{n-3}{2(n-1)} dA(X,Y)\\
&+ \frac{1}{n-2} ((X \invneg dA) \wedge Y^{\flat}- (Y \invneg dA) \wedge X^{\flat} )) \cdot D^A \ph.
\end{align*}

\begin{remark}
For Riemannian 4-manifolds these integrability conditions are given in \cite{cas}. 
\end{remark}
We now clarify the relation of CCKS to conformal Killing forms. For this purpose, we introduce the following set of differential forms for a spinor field $\ph \in \Gamma(S^g)$ and $k \in \mathbb{N}$:

\begin{equation}\label{varfom}
\begin{aligned}
g \left( \alpha_{\ph}^k, \alpha \right) & := d_k \cdot \langle \alpha \cdot \ph, \ph \rangle_{S^g}, & \alpha \in \Omega^k(M), \\
g \left( \alpha_0^{k+1}, \beta \right) & := \frac{2d_k(-1)^{k-1}}{n} h \left( \langle \beta \cdot D^A \ph, \ph \rangle_{S^g} \right), & \beta \in \Omega^{k+1}(M),\\
g \left( \alpha_{\mp}^{k-1}, \gamma \right) & := \frac{2d_k(-1)^{k-1}}{n} h \left( \langle \gamma \cdot D^A \ph, \ph \rangle_{S^g} \right), & \gamma \in \Omega^{k-1}(M), \\
\end{aligned}
\end{equation}
where $h(z):= \frac{1}{2} \left(z + (-1)^{k\left(p+1 + \frac{k-1}{2} \right)} \overline{z} \right)$. $d_k \in U(1)$ are powers of $i$, ensuring that $\alpha_{\ph}^k$ is indeed a real form. Imposing the $Spin^c$-twistor equation yields for $\ph \in \text{ker }P^A$
\begin{align}
\nabla_X^g \alpha_{\ph}^k = X \invneg \alpha_0^{k+1} + X^{\flat} \wedge \alpha_{\mp}^{k-1}, \label{ncf}
\end{align}
i.e. $\alpha_{\ph}^k$ is a conformal Killing form. Such forms have been studied intensively in \cite{sem,nc}. From (\ref{ncf}) we deduce that $(k+1) \alpha_0^{k+1} = d \alpha_{\ph}^k$ and $(n-k+1)\alpha_{\mp}^{k-1} = d^* \alpha_{\ph}^k$. Moreover, in case $k=1$ (\ref{ncf}) is equivalent to say that $V_{\ph} = \left(\alpha_{\ph}^1\right)^{\sharp}$ is a conformal vector field. Note that under a conformal change of the metric with factor $e^{2 \sigma}$, $\alpha_{\ph}^k$ transforms with factor $e^{(k+1)\sigma}$, and thus $V_{\ph}$ depends on the conformal class only.\\
\newline
We now derive further equations for the \textit{Lorentzian} case and $k=1$. Note that in this case we may set $d_1=1$. Let us introduce further forms for $\ph \in \Gamma(S^g)$ by setting
\begin{align*}
g \left( \alpha_{dA}^j, \alpha \right) & := \frac{1}{(n-2)(n-1)} \cdot \text{Re } \langle dA \cdot \ph, \alpha \cdot \ph \rangle_{S^g}, & \alpha \in \Omega^j(M), \\
g \left( \widetilde{\alpha}_0^{2}, \beta \right) & := \frac{2}{n} \text{Im } \langle \beta \cdot D^A \ph, \ph \rangle_{S^g} , & \beta \in \Omega^{2}(M),\\
\widetilde{\alpha}_{\mp} & := \frac{2}{n} \text{Im } \langle  D^A \ph, \ph \rangle_{S^g}. & \\
\end{align*}
Differentiating the various forms and straightforward calculation reveals that the twistor equation and (\ref{2p}) yield the following system of equations:
\begin{equation}\label{noco}
\begin{aligned}
\begin{pmatrix}
\nabla_X^g & - X \invneg & - X^{\flat} \wedge & 0 \\ -K^{g}(X) \wedge  & \nabla_X^g & 0 & X^{\flat} \wedge \\ -K^{g}(X) \invneg & 0 & \nabla_X^g &  -X \invneg \\ 0 & K^g(X) \invneg & -K^{g} (X)\wedge & \nabla_X^g 
\end{pmatrix} \begin{pmatrix} \alpha_{\ph}^1 \\ \alpha^2_0 \\ \alpha_{\mp} \\ \frac{2}{n^2} \alpha_{D^A \ph}^1 \end{pmatrix} = \begin{pmatrix} 0 \\ \frac{1}{n-2} (X \invneg \frac{1}{i}dA)^{\sharp} \invneg \alpha_{\ph}^3 - X^{\flat} \wedge \alpha_{dA}^1 + X \invneg \alpha_{dA}^3 \\ X \invneg \alpha_{dA}^1 \\ \frac{1}{n-1} \left( \frac{1}{n-2} (X \invneg \frac{1}{i}dA)^{\sharp} \invneg \widetilde{\alpha}_0^2 + \widetilde{\alpha}_{\mp} \cdot (X \invneg \frac{1}{i}dA) \right)\end{pmatrix}
\end{aligned}
\end{equation}

\begin{remark} \label{tdf}
Elements in the kernel of the operator on the left hand side define precisely \textit{normal} conformal Killing forms resp. vector fields. For a conformal vector field $V$, $V^{\flat}$ being normal conformal is equivalent to the curvature conditions (see \cite{cov1,cov2,raj}) $V \invneg W^g = 0$ and $V \invneg C^g = 0 \in \mathfrak{X}(M)$.
Due to the $dA-$terms, the associated vector to a CCKS is in general no normal conformal vector field, in contrast to the $Spin$ setting. In general, there is no additional equation for $\alpha_{\ph}^1$ only, except the conformal Killing equation.
\end{remark}

We next study the relation of $V_{\ph}$ with the two main curvature quantities related to a CCKS, namely $W^g$ and $dA$, for the Lorentzian case. First, we show that $V_{\ph}$ preserves $dA$.
\begin{Proposition} \label{ed}
It holds that $V_{\ph} \invneg \left( \frac{1}{i}dA \right) =  \frac{2(1-n)}{n} d \left(Im \langle D^A \ph, \ph \rangle_{S^g} \right)$. In particular, we have that
\begin{align*}
L_{V_{\ph}} \frac{1}{i}dA = 0.
\end{align*}
\end{Proposition}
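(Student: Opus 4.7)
The plan is to prove the first identity by directly computing $d(\text{Im}\langle D^A\ph, \ph\rangle_{S^g})$ and recognising it as a constant multiple of the 1-form $V_\ph\invneg (dA/i)$. Once that identity is in hand, the second claim follows instantly from Cartan's magic formula, since $d(dA)=0$ gives $L_{V_\ph}dA = d(V_\ph\invneg dA)$, and $V_\ph\invneg dA$ will be exact by the first claim.

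The crucial input is a parity rule for the Hermitian inner product $\langle\cdot,\cdot\rangle_{S^g}$ in Lorentzian signature. The relation $\langle X\cdot u, v\rangle_{S^g}+(-1)^p \langle u, X\cdot v\rangle_{S^g}=0$ specialises at $p=1$ to self-adjointness of every vector on the spinor module, so a $k$-fold Clifford product of pairwise orthogonal vectors is self-adjoint for $k\equiv 0,1\pmod 4$ and skew-adjoint for $k\equiv 2,3\pmod 4$. In particular, for a real $k$-form $\omega$ the scalar $\langle \omega\cdot\ph,\ph\rangle_{S^g}$ is real when $k\in\{0,1\}$ and purely imaginary when $k\in\{2,3\}$.

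I would then expand
\[ X\bigl(\text{Im}\langle D^A\ph, \ph\rangle_{S^g}\bigr)=\text{Im}\bigl(\langle \nabla^A_X D^A\ph, \ph\rangle_{S^g}+\langle D^A\ph, \nabla^A_X\ph\rangle_{S^g}\bigr) \]
and substitute (\ref{2p}) together with the twistor equation $\nabla^A_X\ph=-\frac{1}{n}X\cdot D^A\ph$. By the parity rule, the Schouten contribution $\langle K^g(X)\cdot\ph,\ph\rangle_{S^g}$ is real (a 1-form) and the term $\langle D^A\ph, X\cdot D^A\ph\rangle_{S^g}$ is real by self-adjointness of $X$; both drop out of the imaginary part. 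Writing $dA=iF$ with $F\in\Omega^2(M,\R)$ and using $X\cdot F=X^\flat\wedge F-X\invneg F$ from (\ref{clid}), the 3-form piece and the 1-form piece in $X\cdot dA$ have opposite adjoint behaviour, and after multiplication by $i$ only the 1-form contributions survive in the imaginary part. Invoking $\langle Y\cdot\ph,\ph\rangle_{S^g}=g(V_\ph, Y)$ (available since $d_1=1$ in the Lorentzian case) to identify $\langle (X\invneg F)\cdot\ph,\ph\rangle_{S^g}=F(X,V_\ph)=-(V_\ph\invneg F)(X)$ and collecting the coefficients $\frac{n}{2(n-2)(n-1)}$ and $\frac{n}{2(n-2)}$ from (\ref{2p}) yields
\[ X\bigl(\text{Im}\langle D^A\ph, \ph\rangle_{S^g}\bigr)=\frac{n}{2(n-2)}\Bigl(\tfrac{1}{n-1}-1\Bigr)(V_\ph\invneg F)(X)=-\frac{n}{2(n-1)}(V_\ph\invneg F)(X), \]
which rearranges to the claimed formula. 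The main obstacle is the careful bookkeeping of real versus imaginary parts through the Clifford identities; once the parity rule is in place, the remaining work is routine coefficient matching.
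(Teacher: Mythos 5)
Your proposal is correct and is essentially the paper's own argument run in the opposite direction: the paper evaluates $(V_{\ph}\invneg \tfrac{1}{i}dA)(Y)=-\langle (Y\invneg \tfrac{1}{i}dA)\cdot\ph,\ph\rangle_{S^g}$ and inserts (\ref{2p}), while you differentiate $\text{Im}\langle D^A\ph,\ph\rangle_{S^g}$ and insert (\ref{2p}), but both rest on the same ingredients — the Lorentzian reality/imaginarity of the spinor bilinears, the twistor equation to discard $\langle Y\cdot D^A\ph, D^A\ph\rangle_{S^g}$, the identification $\langle Y\cdot\ph,\ph\rangle_{S^g}=g(V_{\ph},Y)$, and Cartan's formula for the Lie derivative. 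Your coefficient bookkeeping reproduces the stated constant $\tfrac{2(1-n)}{n}$ correctly.
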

\textit{Proof. }
Let us write $\omega = \frac{1}{i}dA \in \Omega^2(M)$. We have for $Y \in TM$:
\begin{align*}
\left(V_{\ph} \invneg \omega \right) (Y)=& \omega(V_{\ph},Y) = - \omega(Y,V_{\ph}) = -g \left( (Y \invneg \omega)^{\sharp},V_{\ph} \right) \\
=&- \langle (Y \invneg \omega) \cdot \ph, \ph \rangle_{S^g} \\
{=}& \frac{1}{i} \frac{2(1-n)}{n} \cdot \langle \nabla_Y^A D^A \ph, \ph \rangle_{S^g} +(n-1) \cdot \underbrace{\frac{1}{i} \langle K^g(Y) \cdot \ph, \ph \rangle_{S^g}}_{\in i \R} \\
&+ \frac{1}{n-2} \cdot \underbrace{\langle (Y^{\flat} \wedge \omega) \cdot \ph, \ph \rangle_{S^g}}_{\in i\R} \in \R \text{ (}(\ref{2p})\text{ inserted)} \\
=& 2 \frac{1-n}{n} \cdot \text{Im} \langle \nabla_Y^A D^A \ph, \ph \rangle_{S^g} = 2 \frac{1-n}{n} \cdot \text{Im} ( Y (\langle D^A \ph, \ph \rangle_{S^g}) + \frac{1}{n} \underbrace{\langle Y \cdot D^A \ph, D^A \ph \rangle_{S^g}}_{\in \R} ) \\
=& 2 \frac{1-n}{n} \cdot d \left(\text{Im }\langle D^A \ph, \ph \rangle_{S^g} \right) (Y).
\end{align*}
The second formula follows directly with Cartans relation $L = \invneg \circ d + d \circ \invneg$.
$\hfill \Box$

\begin{remark}
For 4-d. Lorentzian manifolds \cite{CCKS1} gives an alternative proof of Proposition \ref{ed}.
\end{remark}

Next, we investigate how $V_{\ph}$ inserts into the Weyl tensor. We have by definition and using (\ref{clid})for $X,Y,Z \in TM$:
\begin{equation}\label{ufff}
\begin{aligned}
W^g(V_{\ph},X,Y,Z) &= - \langle \ph, W^g(X,Y,Z) \cdot \ph \rangle_{S^g} \\
 & {=} \langle \ph, Z \cdot W^g(X,Y) \cdot \ph \rangle_{S^g} - \langle \ph, (Z^{\flat} \wedge W^g(X,Y)) \cdot \ph \rangle_{S^g} \in \R 
\end{aligned}
\end{equation}
In Lorentzian signature, $\langle \ph, \omega \cdot \ph  \rangle_{S^g} \in i \R$ for $\omega \in \Omega^3(M)$. Inserting the integrability condition (\ref{int1}) and keeping only real terms, we arrive with the aid of (\ref{clid}) at
\begin{align*}
W^g(V_{\ph},X,Y,Z) = c_n \cdot \langle \ph, \left(Z \invneg (X^{\flat} \wedge Y^{\flat} \wedge dA) + \frac{3-n}{2} Z^{\flat} \wedge ( X^{\flat} \wedge (Y \invneg dA) - Y^{\flat} \wedge (X \invneg dA)) \right) \cdot \ph \rangle_{S^g},
\end{align*}
where $c_n=- \frac{2}{(n-2)(n-1)}$. By permuting $X,Y$ and $Z$, it is pure linear algebra to conclude that the last expression vanishes for all $X,Y,Z \in TM$ if and only if $\langle (X^{\flat} \wedge Y^{\flat} \wedge (Z \invneg dA)) \cdot \ph, \ph \rangle_{S^g} = 0$ for all $X,Y,Z \in TM$. We can express this as follows:

\begin{Proposition} \label{poll}
For a Lorentzian CCKS $\ph \in \text{ker }P^A$, it holds the curvature relation
\begin{align*}
V_{\ph} \invneg W^g = 0 \Leftrightarrow (Z \invneg \frac{1}{i}dA)^{\sharp} \invneg \alpha_{\ph}^3 = 0 \text{ }\forall Z \in TM. 
\end{align*}
\end{Proposition}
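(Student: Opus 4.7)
The plan is to proceed by a direct curvature computation, expressing $W^g(V_\ph, X, Y, Z)$ as a spinor bilinear involving only $dA$, and then translating the vanishing of that bilinear into a statement about contractions of the 3-form $\alpha_\ph^3$.

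Starting from $W^g(V_\ph, X, Y, Z) = -\langle \ph, W^g(X,Y,Z) \cdot \ph \rangle_{S^g}$ as in (\ref{ufff}), I would use (\ref{clid}) to rewrite $W^g(X,Y,Z) = Z \cdot W^g(X,Y) - Z^\flat \wedge W^g(X,Y)$, then substitute $W^g(X,Y) \cdot \ph$ using the integrability condition (\ref{int1}), keeping only the real part of the resulting bilinear (in Lorentzian signature $\langle \ph, \omega \cdot \ph \rangle_{S^g}$ is real or purely imaginary depending on $\deg\omega \bmod 4$). A careful Clifford bookkeeping then yields the identity already displayed right after (\ref{ufff}),
\begin{equation*}
W^g(V_\ph, X, Y, Z) = c_n \cdot \langle \ph, \bigl[Z \invneg (X^\flat \wedge Y^\flat \wedge dA) + \tfrac{3-n}{2} Z^\flat \wedge \bigl(X^\flat \wedge (Y \invneg dA) - Y^\flat \wedge (X \invneg dA)\bigr)\bigr] \cdot \ph \rangle_{S^g}
\end{equation*}
with $c_n = -\frac{2}{(n-2)(n-1)}$.

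The key algebraic step is to show the equivalence between the vanishing of this full bilinear for all $X, Y, Z$ and the vanishing of the single bilinear $\langle \ph, T(X,Y,Z) \cdot \ph \rangle_{S^g}$, where $T(X,Y,Z) := X^\flat \wedge Y^\flat \wedge (Z \invneg dA)$. Using the Leibniz rule $Z \invneg (X^\flat \wedge Y^\flat \wedge dA) = g(Z,X) Y^\flat \wedge dA - g(Z,Y) X^\flat \wedge dA + T(X,Y,Z)$, the bracket decomposes into three $T$-type terms (with permuted arguments) and two \emph{trace-type} terms of the shape $W^\flat \wedge dA$. One direction is clean: assuming all $T$-bilinears vanish, the identity $\sum_i \epsilon_i e_i^\flat \wedge (e_i \invneg dA) = 2\, dA$ lets one contract two indices of $T$ to recover $\langle \ph, W^\flat \wedge dA \cdot \ph \rangle_{S^g} = 0$, so that the full bracket bilinear vanishes. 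The converse is the main obstacle: here one uses the automatic antisymmetry of $W^g(V_\ph, X, Y, Z)$ in $(Y, Z)$ together with cyclic permutations of $(X, Y, Z)$ to solve an algebraic system for the individual $T$-bilinears, and this permutation argument is what the text labels as ``pure linear algebra''.

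Finally, the translation to $\alpha_\ph^3$ is a formal rewriting. Since $X^\flat \wedge Y^\flat \wedge (Z \invneg dA) = (Z \invneg dA) \wedge X^\flat \wedge Y^\flat$, the adjointness identity $g(\alpha \wedge \omega, \tau) = g(\omega, \alpha^\sharp \invneg \tau)$ for $\alpha \in \Omega^1, \omega \in \Omega^2, \tau \in \Omega^3$ combined with the defining relation $g(\alpha_\ph^3, \eta) = d_3 \langle \eta \cdot \ph, \ph \rangle_{S^g}$ gives
\begin{equation*}
\langle \ph, X^\flat \wedge Y^\flat \wedge (Z \invneg dA) \cdot \ph \rangle_{S^g} = d_3^{-1} \cdot g\bigl(X^\flat \wedge Y^\flat,\; (Z \invneg dA)^\sharp \invneg \alpha_\ph^3\bigr),
\end{equation*}
the factor $\tfrac{1}{i}$ appearing in the proposition absorbing the imaginary nature of $dA$. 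Since the decomposable forms $X^\flat \wedge Y^\flat$ separate points of $\Lambda^2 T^*M$ under the metric pairing, the bilinear vanishes for all $X, Y, Z$ if and only if $(Z \invneg \tfrac{1}{i} dA)^\sharp \invneg \alpha_\ph^3 = 0$ for all $Z$, establishing the claimed equivalence.
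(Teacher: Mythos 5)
Your proposal is correct and matches the paper's own argument essentially step for step: the same derivation of $W^g(V_{\ph},X,Y,Z)$ as a $dA$-bilinear from (\ref{ufff}) and the integrability condition (\ref{int1}), the same reduction of its vanishing to the vanishing of $\langle \ph, (X^{\flat}\wedge Y^{\flat}\wedge(Z\invneg dA))\cdot\ph\rangle_{S^g}$ for all $X,Y,Z$, and the same reinterpretation of that condition via $\alpha_{\ph}^3$. The one step you only sketch, the permutation argument for the converse, is precisely the step the paper itself dismisses as ``pure linear algebra'', while your added details (the trace identity $\sum_i \epsilon_i e_i^{\flat}\wedge(e_i\invneg dA)=2\,dA$ for the easy direction and the wedge/contraction adjointness for the final translation) are consistent with the paper's computation.
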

In particular, one does not need to compute $W^g$ to check the first condition for $V_{\ph}$ being normal conformal. One obtains another relation between $dA$ and $V_{\ph}$ by requiring the imaginary part of (\ref{ufff}) to vanish. Again, inserting (\ref{int1}) and straightforward manipulations yield with the notation $g((X \invneg dA)^{\sharp},Y):=i \cdot g((X \invneg \frac{1}{i}dA)^{\sharp},Y) \in i\R$ for $X,Y \in TM$:
\begin{align*}
0 =& (3-n) (g(V_{\ph},Z) dA(X,Y) + g(V_{\ph},X) dA(Y,Z) + g(V_{\ph},Y)dA(Z,X) - g(X,Z) g((Y \invneg dA)^{\sharp},V_{\ph})\\
 &+ g(Y,Z) g((X \invneg dA)^{\sharp},V_{\ph}) ) + \frac{2}{n-2} g\left(\alpha_{\ph}^5, X^{\flat} \wedge Y^{\flat} \wedge Z^{\flat} \wedge dA \right)+ i(n-1) \cdot g \left( \alpha_{\ph}^3, Z^{\flat} \wedge W^g(X,Y) \right)
\end{align*}
As a consistency check, note that all integrability conditions including the Weyl curvature become trivial in case $n=3$. Finally, inserting (\ref{int1}) into $g \left(\alpha_{\ph}^2, W^g(X,Y) \right)=i \cdot \langle \ph, W^g(X,Y) \cdot \ph \rangle_{S^g} \in \R$ and splitting into real and imaginary part, we arrive at the relations
\begin{align*}
i \cdot (1-n) g \left(\alpha_{\ph}^2, W^g(X,Y) \right) &= (3-n) dA(X,Y) \langle \ph, \ph \rangle_{S^g} + \frac{2}{n-2} g \left(\alpha_{\ph}^4, X^{\flat} \wedge Y^
{\flat} \wedge dA \right), \\
0 &= \langle \ph, \left(X^{\flat} \wedge (Y \invneg dA) - Y^{\flat} \wedge (X \invneg dA) \right) \cdot \ph \rangle_{S^g}.
\end{align*}

We conclude these general observations about CCKS with some remarks regarding the zero set $Z_{\ph} \subset M$ of a CCKS $\ph \in \text{ker }P^A$. By (\ref{2p}) every $x \in Z_{\ph}$ satisfies $\nabla D^A \ph (x) = 0$. This observation allows one to prove literally as in \cite{bfkg} and \cite{lei} the following:

\begin{Proposition}
Let $\ph \in \text{ker }P^A$ be a CCKS on $(M^{p,q},g)$. If $\gamma:I \rightarrow Z_{\ph} \subset M$ is a curve which runs in the zero set, then $\gamma$ is isotropic. If $p=0$, then $Z_{\ph}$ consists of a countable union of isolated points. If $p=1$, then the image of every geodesic $\gamma_v$ starting in $x \in Z_{\ph}$ with initial velocity $v$ satisfying that $v \cdot D^g \ph(x) = 0$ is contained in $Z_{\ph}$
\end{Proposition}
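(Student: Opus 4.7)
The plan is to exploit the fact that $(\ph, D^A \ph)$ is parallel with respect to the prolongation connection $\nabla^{E^g,A}$ from (\ref{dat}), which collapses the twistor equation together with (\ref{2p}) into a second-order linear ODE for $\ph$ along geodesics. The starting observation, already noted in the hint, is that (\ref{2p}) shows $\nabla_X^A D^A \ph$ depends linearly on $\ph$, so indeed $\nabla D^A \ph(x) = 0$ for every $x \in Z_{\ph}$. For a curve $\gamma : I \to Z_{\ph}$ one has $\ph \circ \gamma \equiv 0$, hence $\nabla^A_{\dot\gamma}\ph = 0$ along $\gamma$; substituting into the twistor equation (\ref{tatar}) yields $\dot\gamma \cdot D^A \ph = 0$. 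Applying Clifford multiplication by $\dot\gamma$ once more and using $\dot\gamma \cdot \dot\gamma = -g(\dot\gamma,\dot\gamma)$ gives $g(\dot\gamma,\dot\gamma)\, D^A \ph = 0$; since a nontrivial CCKS cannot vanish simultaneously with its Dirac image (as observed after (\ref{dat})), $D^A \ph$ is nonzero along $\gamma$, so $\gamma$ is isotropic.

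For signature $p = 0$ the metric is positive definite, hence the only isotropic vector is $0$. Consequently $Z_{\ph}$ contains no non-constant smooth curve, is totally disconnected, and by second countability of $M$ must be at most a countable union of isolated points.

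For the Lorentzian case $p = 1$, fix $x \in Z_{\ph}$ and $v \in T_xM$ with $v \cdot D^A \ph(x) = 0$ (so $v$ is automatically null by the same Clifford-squaring argument). Along the geodesic $\gamma_v$, differentiate the twistor equation once and use (\ref{2p}) together with $\nabla^g_{\dot\gamma_v}\dot\gamma_v = 0$ to obtain a second-order linear ODE
\begin{align*}
\nabla^A_{\dot\gamma_v}\nabla^A_{\dot\gamma_v} \ph \;=\; -\tfrac{1}{2}\, \dot\gamma_v \cdot M(t) \cdot \ph,
\end{align*}
where $M(t)$ is a smoothly varying Clifford-algebra-valued coefficient built from $K^g(\dot\gamma_v)$ and $dA$ according to (\ref{2p}). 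The initial data at $t = 0$ are $\ph(x) = 0$ and $\nabla^A_{\dot\gamma_v}\ph(x) = -\tfrac{1}{n}\, v \cdot D^A\ph(x) = 0$. Uniqueness for linear second-order ODEs with smooth coefficients then forces $\ph \equiv 0$ along $\gamma_v$, i.e.\ $\gamma_v(I) \subset Z_{\ph}$. The one mildly subtle step is verifying that the above second-order equation really closes in $\ph$ alone; this is automatic because $\nabla^A_{\dot\gamma_v} D^A \ph$ is proportional to $\ph$ by (\ref{2p}), and the $\nabla^g_{\dot\gamma_v}\dot\gamma_v$ term produced by differentiating $\dot\gamma_v \cdot D^A\ph$ vanishes on a geodesic.
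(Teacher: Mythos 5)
Your arguments for the first and third assertions are correct and are essentially the standard ones (the paper itself only cites \cite{bfkg,lei} after noting $\nabla D^A\ph(x)=0$ on $Z_\ph$): along a curve in $Z_\ph$ the twistor equation forces $\dot\gamma\cdot D^A\ph=0$, Clifford squaring gives $g(\dot\gamma,\dot\gamma)\,D^A\ph=0$, and $D^A\ph\neq 0$ on $Z_\ph$ because $\ph$ and $D^A\ph$ cannot vanish simultaneously for a nontrivial CCKS; and in the Lorentzian case the prolonged system (\ref{dat}) closes along a geodesic into a second-order linear ODE for $\ph$ alone with vanishing initial data $\ph(x)=0$, $\nabla^A_{\dot\gamma_v}\ph(x)=-\tfrac{1}{n}v\cdot D^A\ph(x)=0$, so uniqueness gives $\ph\equiv 0$ on $\gamma_v$.

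The Riemannian case $p=0$, however, has a genuine gap. From ``$Z_\ph$ contains no non-constant curve'' you infer that $Z_\ph$ is a countable union of isolated points, but total disconnectedness does not imply that points are isolated or that the set is countable: a Cantor-type subset of $M$ contains no non-constant smooth curve, is totally disconnected and second countable, yet is uncountable and perfect. What is needed is a local argument showing each zero is isolated, and this again comes from the prolonged ODE system rather than from topology: for $x\in Z_\ph$ and any unit vector $v\in T_xM$, along $\gamma_v$ one has $\ph(\gamma_v(t)) = -\tfrac{t}{n}\bigl(v\cdot D^A\ph(x)\bigr)^{\parallel} + O(t^2)$ (parallel transport along $\gamma_v$), and in positive definite signature Clifford multiplication by a nonzero vector is injective, so $v\cdot D^A\ph(x)\neq 0$ since $D^A\ph(x)\neq 0$. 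Hence $\ph$ does not vanish for small $t\neq 0$; compactness of the unit sphere in $T_xM$ makes the bound uniform in $v$, so $x$ is isolated in $Z_\ph$, and a discrete subset of a second countable manifold is countable. With this replacement the proof of the second assertion is complete; as written, the inference is invalid.
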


\section{CCKS and CR-Geometry} \label{crg}
\subsection*{The Fefferman metric}
This section gives a global construction principle of CCKS with nontrivial curvature $dA \in \Omega^2(M,i\R)$ on Lorentzian manifolds $(M^{1,2n+1},g)$ starting from $(2n+1)-$dimensional strictly pseudoconvex structures. It can be viewed as the $Spin^c$-analogue of \cite{bafe}. In view of the subsequent considerations, let us review the following well-known fact:\\
\newline
Consider a pseudo-Riemannian K\"ahler manifold $(M^{p,q},g,J)$, where $(p,q)=(2p',2q')$, $p+q=2n$, endowed with its canonical $Spin^c-$structure (cf. Remark \ref{drr}), where the $U(p',q')-$reduction $\Pe_U$ of $\Pe^g$ is given by considering only pseudo-orthonormal bases of the form $(s_1,J(s_1),...,s_n,J(s_n))$. As $J$ is parallel, $\nabla^g$ reduces to a connection $\omega_U^g \in \Omega^1 (\Pe_U,\mathfrak{u}(p',q'))$. By Remark \ref{drr}, $\Pe_U$ and the $S^1$-bundle $\Pe_1$ are related by det-reduction, 
\[ \phi_1: \Pe_U \rightarrow \Pe_1 = \Pe_U \times_{\text{det}} S^1. \]
Whence, there exists a connection $A \in \Omega^1 (\Pe_1,i \R)$, uniquely determined by
\begin{align*}
(\phi_1(s))^*A=:A^{\phi_1 \circ s} = \text{tr} \left(\omega_U^g \right)^s \text{ for }s \in \Gamma(V,\Pe_U).
\end{align*} 
One calculates that $dA(X,Y) = i\cdot Ric^g(X,JY)$.

\begin{Proposition} 
On every pseudo-Riemannian K\"ahler manifold $(M^{p,q},g,J)$ there exists a $\nabla^A$- parallel spinor.
\end{Proposition}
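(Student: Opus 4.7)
The plan is to exhibit an explicit parallel spinor built from a distinguished pure spinor of the canonical $Spin^c$-structure. Let $v_0 := u(1,1,\ldots,1) \in \Delta^{\mathbb{C},+}$ in the notation of Remark \ref{ds}, and define $\varphi_0 \in \Gamma(S^g)$ locally by $\varphi_0|_U := [\phi_c(s), v_0]$ using a local $J$-adapted frame $s = (s_1, Js_1, \ldots, s_n, Js_n) \in \Gamma(U, \mathcal{P}_U)$. Two things must be checked: that $\varphi_0$ is globally well-defined, and that $\nabla^A \varphi_0 = 0$.

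For global well-definedness, I would use that $\mathcal{Q}^c = \mathcal{P}_U \times_l Spin^c(p,q)$ and $U(p',q')$ is connected (it retracts onto $U(p') \times U(q')$); hence it suffices to show the infinitesimal statement $\rho(l_*(X)) v_0 = 0$ for all $X \in \mathfrak{u}(p',q')$. The relation $\zeta_* \circ l_* = F_*$ combined with $\zeta_*(e_k \cdot e_l, it) = (2E_{kl}, 2it)$ yields $l_*(X) = \tfrac{1}{2}\widetilde{\iota(X)} + \tfrac{1}{2}\operatorname{tr}_{\mathbb{C}}(X)$. Writing $X \in \mathfrak{u}(p',q')$ in the $J$-adapted basis, the diagonal blocks contribute a sum of terms $-\tfrac{1}{2} a_j\, e_{2j-1} e_{2j} \cdot v_0$ (with $X_{jj} = i a_j$), and a direct computation from Remark \ref{ds} gives $e_{2j-1} e_{2j} \cdot v_0 = \epsilon_{2j-1} i\, v_0$; combined with $\operatorname{tr}_{\mathbb{C}}(X) = i \sum_j a_j$ and the fact that $\epsilon_{2j-1} = \epsilon_{2j}$ in any $J$-adapted frame, the diagonal contribution precisely cancels $\tfrac{1}{2}\operatorname{tr}_{\mathbb{C}}(X) v_0$. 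For the off-diagonal blocks, I would rewrite $e_{2k-1}, e_{2k}$ in terms of the isotropic Witt basis $E_k^{\pm} := \tfrac{1}{2}(e_{2k-1} \mp i\, e_{2k})$ and observe that $v_0$ is the pure spinor characterized by $E_k^{-} \cdot v_0 = 0$ for all $k$; then the off-diagonal contribution reorganizes into a linear combination of $E_k^{+} E_l^{-}$ and $E_k^{-} E_l^{+}$, both of which annihilate $v_0$ in view of $E_l^{-} v_0 = 0$ and $\{E_k^-, E_l^+\} = 0$ for $k \neq l$.

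With invariance established, the parallelism follows directly from formula (\ref{lofo}). In a $J$-adapted frame the Levi-Civita connection reduces to the $\mathfrak{u}(p',q')$-valued form $\omega_U^g$, so that $(\omega_{kl}(X))$ is $\iota(\omega_U^g(X))$ and, by construction of $A$ together with (\ref{sec}), $A^e(X) = \operatorname{tr}_{\mathbb{C}}(\omega_U^g(X))$. Inserting these into (\ref{lofo}) identifies the bracket expression with $\rho\bigl(l_*(\omega_U^g(X))\bigr)\,v_0$, which vanishes by the infinitesimal statement just proved. The main obstacle is the Clifford-algebraic step: controlling the pseudo-Riemannian signs $\tau_j, \epsilon_j$ from Remark \ref{ds} while recasting the off-diagonal part of $\mathfrak{u}(p',q')$ in the $J$-creation/annihilation basis. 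The key simplification is that $J$ preserves the causal type of basis vectors, which is what makes the diagonal eigenvalue $\epsilon_{2j-1} i$ match the corresponding trace term and produce the cancellation uniformly in the signature $(p',q')$.
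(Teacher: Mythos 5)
Your overall strategy is the same as the paper's: pick the distinguished extremal weight spinor, check that it is fixed by $l(U(p',q'))$ so that $[\phi_c(s),v_0]$ is frame-independent, and then read off parallelism from (\ref{lofo}) using $A^e=\operatorname{tr}(\omega^g_U)$. The paper simply quotes the invariance (\ref{fu}) from the literature, so filling in the Lie-algebra computation is welcome. However, there is a genuine error in which spinor you pick and in the sign bookkeeping that is supposed to justify the cancellation. In the realisation of Remark \ref{ds} one has $e_{2j-1}e_{2j}\cdot u(\delta_1,\dots,\delta_m)=\epsilon_{2j-1}\,i\,\delta_{k(j)}\,u(\delta_1,\dots,\delta_m)$ (your eigenvalue formula, correct), but the diagonal block of $\iota(X)$ for $X_{jj}=ia_j$ is $a_j(D_{2j,2j-1}-D_{2j-1,2j})=a_j\,\epsilon_{2j-1}\,E_{2j-1,2j}$, because $E_{ij}=-\epsilon_jD_{ij}+\epsilon_iD_{ji}$ carries the signature signs. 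Hence the $\mathfrak{spin}$-part of $l_*(X)$ contains $+\tfrac{1}{2}a_j\epsilon_{2j-1}\,e_{2j-1}e_{2j}$, not $-\tfrac{1}{2}a_j\,e_{2j-1}e_{2j}$ as you wrote, and acting on $u(\delta_1,\dots,\delta_m)$ the two $\epsilon$'s square away, giving per block the contribution $\tfrac{i}{2}a_j(\delta_{k(j)}+1)$ after adding $\tfrac{1}{2}\operatorname{tr}_{\C}(X)=\tfrac{i}{2}\sum_j a_j$. The cancellation therefore forces $\delta_j\equiv -1$: the invariant spinor is $u(-1,\dots,-1)$, exactly as in (\ref{fu}), and \emph{not} your $v_0=u(1,\dots,1)$. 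With your choice, $\rho(l_*(X))v_0=\operatorname{tr}_{\C}(X)\,v_0$, so $\varphi_0$ transforms by the determinant character: it is not a well-defined section of $S^g$ independent of the adapted frame, and inserting it into (\ref{lofo}) produces $\nabla^A_X\varphi_0=\operatorname{tr}(\omega^g_U(X))\,\varphi_0\neq 0$ (the trace terms double instead of cancelling). This is not a harmless convention issue: the paper's remark after Theorem \ref{feff} points out precisely that the eigenspinor to the \emph{other} extremal eigenvalue of the K\"ahler form is in general no $Spin^c$-parallel spinor for this canonical choice of $A$; it would only work after replacing $A$ by the connection induced by $\det^{-1}$ (the anti-canonical structure).

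A second, related slip: in this realisation $E_k^-=\tfrac{1}{2}(e_{2k-1}+ie_{2k})$ annihilates $u(\delta)$ in the $k$-th slot precisely when $\delta=-1$, so the pure spinor characterized by $E_k^-\cdot v=0$ for all $k$ is $u(-1,\dots,-1)$, not $u(1,\dots,1)$; your off-diagonal argument (writing the $(1,1)$-part of $\iota(X)$ in terms of $E_k^{\pm}E_l^{\mp}$ and using the annihilation property plus anticommutation) is fine in structure, but it too only applies to $u(-1,\dots,-1)$. If you replace $v_0$ by $u(-1,\dots,-1)$ and redo the diagonal computation with the $\epsilon$'s from $E_{ij}$ kept, your argument closes and reproduces the paper's proof, with the cancellation indeed uniform in the signature because $\epsilon_{2j-1}=\epsilon_{2j}$ and $\epsilon_{2j-1}^2=1$.
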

 
\textit{Proof. } As known from \cite{kb} the complex spinor module $\Delta_{2n}^{\C}$ decomposes into $\Delta_{2n}^{\C} = \oplus_{k=0}^{n} \Delta^{k,\C}_{2n}$, where the $\Delta_{2n}^{k,\C}$ are eigenspaces of the action of the K\"ahler form $\Omega = \langle \cdot, J \cdot \rangle_{p,q}$ to the eigenvalue $\mu_k = (n-2k)i$. $\Delta_{2n}^{n,\C}$ turns out to be one-dimensional, in the notation from section \ref{srty} it is spanned by $u(-1,...,-1)$ and acted on trivially by $U(p',q')$, i.e.
\begin{align}
l(U) \cdot u(-1,...,-1) = u(-1,...,-1) \text{ for }U \in U(p',q'). \label{fu}
\end{align}
The global section $\ph \in \Gamma(M,S^g)$, where $\ph_{|V}:= [\phi_c(s),u(-1,...,-1)]$ for $s \in \Gamma(V,\Pe_U)$ with $\phi_c$ given in Remark \ref{drr}, is well-defined by (\ref{fu}), i.e. independent of the chosen $s$. Writing $s^*\omega_{U}^g$ and $(\phi_1(s))^*A$ in terms of $\nabla^g$ is straightforward and then one directly calculates with (\ref{lofo}) that $\nabla^A \ph = 0$.$\hfill \Box$\\

The rest of this section is devoted to the conformal analogue of this construction. We follow \cite{bafe} and sometimes refer to this article when leaving out steps which are identical in our construction. To start with, let us recall some basic facts from (real) CR-geometry, following \cite{bafe,baju,sta}.\\

For $M^{2n+1}$ a smooth, connected, oriented manifold of odd dimension $2n+1$, a real CR-structure is a pair $(H,J)$, where
\begin{enumerate}
\item $H \subset TM$ is a real $2n-$dimensional subbundle,
\item $J:H \rightarrow H$ is an almost complex structure on $H$: $J^2=-Id_{H}$,
\item If $X,Y \in \Gamma(H)$, then $[JX,Y]+[X,JY] \in \Gamma(H)$ and the integrability condition
$N_J(X,Y):=J([JX,Y]+[X,JY])-\left[JX,JY\right]+[X,Y] \equiv 0$ holds.
\end{enumerate}
$(M,H,J)$ is called a (oriented) CR-manifold. We fix a nowhere vanishing 1-form $\theta \in \Omega^1(M)$ with $\theta_{|H} \equiv 0$, which is unique up to multiplication with a nowhere vanishing function and define the Levi-form $L_{\theta}$ on $H$ as
\begin{align*}
L_{\theta}(X,Y):=d\theta(X,JY)
\end{align*}
for $X,Y \in \Gamma(H)$. $(M,H,J,\theta)$ is called a strictly-pseudoconvex pseudo-Hermitian manifold if $L_{\theta}$ is positive definite. In this case, $\theta$ is a contact form and we let $T$ denote the characteristic vector field of the contact form $\theta$, i.e. $\theta(T) \equiv 1$ and $T \invneg d\theta \equiv 0$. \\
Under the above assumptions $g_{\theta}:=  L_\theta + \theta \circ \theta$ defines a Riemannian metric on $M$. Clearly, the $SO^+(2n+1)-$frame bundle $\Pe^{g_{\theta}}_M$ reduces to the $U(n)$ bundle
\begin{align*}
\Pe_{U,H}:=\{(X_1,JX_1,...,X_n,JX_n,T) \mid (X_1,JX_1,...,X_n,JX_n) \text{ pos. oriented ONB of }(H,L_{\theta}) \},
\end{align*}
where $U(n) \hookrightarrow SO^+(2n) \hookrightarrow SO^+(2n+1)$. By Remark \ref{drr} this induces a $Spin^c(2n+1)$-structure $\left(\Q^c_M=\Pe_{U,H} \times_l Spin^c(2n+1),f^c_M\right)$ on $(M,g_{\theta})$, where $Spin^c(2n) \hookrightarrow Spin^c(2n+1)$, with auxiliary bundle $\Pe_{1,M}= \Pe_{U,H} \times_{\text{det}}S^1$ and natural reduction maps 
\[ \phi_{c,M}:\Pe_{U,H} \rightarrow \Q^c_M\text{, } \phi_{1,M}:\Pe_{U,H} \rightarrow \Pe_{1,M}. \]

There is a special covariant derivative on a strictly pseudoconvex manifold, the Tanaka Webster connection $\nabla^W: \Gamma(TM) \rightarrow \Gamma(T^*M \otimes TM)$, uniquely determined by requiring it to be metric wrt. $g_{\theta}$ and the torsion tensor $\text{Tor}^W$ to satisfy for $X,Y \in \Gamma(H)$ that
$\text{Tor}^W(X,Y) = L_{\theta}(JX,Y) \cdot T$ and $\text{Tor}^W(T,X) = -\frac{1}{2} ([T,X]+J[T,JX])$.
Let $\text{Ric}^W \in \Omega^2(M,i\R)$ and $R^W \in C^{\infty}(M,\R)$ denote the Tanaka-Webster Ricci-and scalar curvature (see \cite{bafe}). 
As $\nabla^W g_{\theta} = 0, \nabla^WT = 0$ and $\nabla^W J = 0$, it follows that $\nabla^W$ descends to a connection $\omega^W \in \Omega^1(\Pe_{U,H},\mathfrak{u}(n))$. In the standard way, this induces a connection $A^W \in \Omega^1(\Pe_{1,M},i\R)$, uniquely determined by
\begin{align*}
(\phi_{1,M}(s))^*A^W = \text{Tr}\left(s^*\omega^W\right),
\end{align*}
where $s \in \Gamma(V,\Pe_{U,H})$ is a local section. Two connections on a $S^1$-bundle over $M$ differ by an element of $\Omega^1(M,i\R)$. Consequently, 
\begin{align*}
A_{\theta}:= A^W + \frac{i}{2(n+1)}R^W \theta
\end{align*}
is a $S^1-$connection on $\Pe_{1,M}$. Let $\pi:\Pe_{1,M} \rightarrow M$ denote the projection. Setting
\begin{align*}
h_{\theta}:=\pi^*L_{\theta} - i\frac{4}{n+2} \pi^*\theta \circ A_{\theta} 
\end{align*}
defines a right-invariant Lorentzian metric on the total space $F:=\Pe_{1,M}$ considered as manifold, the so-called Fefferman metric. Its further properties are discussed in \cite{leihabil,gr}. In particular, one finds that the conformal class $[h_{\theta}]$ does not depend on $\theta$, which is unique up to multiplication with a nowhere vanishing function, but on the CR-data $(M,H,J)$ only.
We next define a natural $Spin^c(1,2n+1)$-structure on the Lorentzian manifold $(F,h_{\theta})$ and show that it admits a CCKS for a natural choice of $A$.

\subsection*{$Spin^c-$characterization of Fefferman spaces}
This subsection is mainly an application of the spinor calculus for $S^1$-bundles with isotropic fibres over strictly pseudoconvex spin manifolds from \cite{bafe} to our case with slight modifications as we are dealing with $Spin^c$-structures. Let $(F,h_{\theta})$ denote the Fefferman space  of $(M,H,J,\theta)$, where $F=\Pe_{1,M} \stackrel{\pi}{\rightarrow} M$ is the $S^1$-bundle. Let $N \in \mathfrak{X}(F)$ denote the fundamental vector field of $F$ defined by $\frac{n+2}{2}i \in i\R$, i.e. $N(f):=\frac{d}{dt}_{|t=0} \left(f \cdot e^{\frac{n+2}{2}it} \right)$ for $f \in F$. For a vector field $X \in \mathfrak{X}(M)$, let $X^* \in \mathfrak{X}(F)$ be its $A_{\theta}-$horizontal lift. We define the $h_{\theta}$-orthogonal timelike and spacelike vectors $s_1:=\frac{1}{\sqrt{2}}(N-T^*)$, $s_2:=\frac{1}{\sqrt{2}}(N+T^*)$ which are of unit length. Let the time orientation of $(F,h_{\theta})$ be given by $s_1$ and the space orientation by vectors $(s_2,X_1^*,JX_1^*,...,X_n^*,JX_n^*)$, where $(X_1,JX_1,...,X_n,JX_n,T) \in \Pe_{U,H}$. Obviously, the bundle
\begin{align*}
\Pe_{U,F}:=\{(s_1,s_2,X_1^*,JX_1^*,...,X_n^*,JX_n^*) \mid (X_1,JX_1,...,X_n,JX_n,T) \in \Pe_{U,H} \} \rightarrow F
\end{align*}
is a $U(n) \hookrightarrow SO^+(1,2n+1)$ reduction of the orthonormal frame bundle $\Pe^{h_{\theta}}_F \rightarrow F$ and $\Pe_{U,F} \cong \pi^* \Pe_{U,H}$. It follows again with Remark \ref{drr} that there is a canonically induced $Spin^c(1,2n+1)$-structure for $(F,h_{\theta})$, namely
\begin{align*}
\left( \mathcal{Q}^c_F:=\Pe_{U,F} \times_l Spin^c(1,2n+1), f^c_F, \Pe_{1,F}:=\Pe_{U,F} \times_{\text{det}}S^1 \right),
\end{align*}
where $U(n) \stackrel{l}{\rightarrow} Spin^c(2n) \hookrightarrow Spin^c(1,2n+1)$, together with reduction maps
\[ \phi_{c,F}:\Pe_{U,F} \rightarrow \Q^c_F\text{, } \phi_{1,F}:\Pe_{U,F} \rightarrow \Pe_{1,F}. \]
There are two distinct natural maps between the $S^1$-bundles $\Pe_{1,F}$ and $F$: Viewing $\Pe_{1,F}$ as the total space of an $S^1-$bundle over the manifold $F$ gives the projection $\pi_F:\Pe_{1,F} \rightarrow F$, whereas the isomorphism $\pi^*\Pe_{U,H} \cong \Pe_{U,F}$ leads to a natural $S^1-$equivariant bundle map
\begin{align*}
\widehat{\pi}_F: \Pe_{1,F} \cong \pi^*\Pe_{U,H} \times_{\text{det}}S^1 &\rightarrow F \cong \Pe_{U,H} \times_{\text{det}}S^1, \\
[v,z] &\mapsto [\pi_U(v),z ],
\end{align*}
with $\pi_U : \pi^*\Pe_{U,H} \rightarrow \Pe_{U,H}$ being the natural projection. The proof of the following statements is a matter of unwinding the definitions:
\begin{Proposition} \label{dum}
Let $s \in \Gamma(V,\Pe_{U,H})$ be a local section for some open set $V \subset M$ and define $\widehat{s} \in \Gamma(\pi^{-1}(V),\Pe_{U,F} \cong \pi^*\Pe_{U,H})$ by $\widehat{s}(f):= (f,s(\pi(f)) \in (\pi^*\Pe_{U,H})_f$. Further, let $\pi_U : \pi^*\Pe_{U,H} \rightarrow \Pe_{U,H}$ be the natural projection. Then the following diagram commutes:
\begin{align*}
\begin{xy}
  \xymatrix{
      F  \ar@{->}[r]^{\widehat{s}} \ar[d]^{\pi}    &   \Pe_{U,F} \ar@{->}[r]^{\phi_{1,F}} \ar[d]^{\pi_U} & \Pe_{1,F} \ar[d]^{\widehat{\pi}_F}  \\
      M \ar@{->}[r]^{s}   &   \Pe_{U,H} \ar@{->}[r]^{\phi_{1,M}}   &  F=\Pe_{1,M} \\
  }
\end{xy}
\end{align*}
\end{Proposition}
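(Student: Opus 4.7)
The plan is to verify the commutativity of the diagram by checking each of the two squares separately, both of which reduce to unwinding the definitions of the various bundle maps; the statement is really a book-keeping assertion about the identification $\Pe_{U,F} \cong \pi^*\Pe_{U,H}$.

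For the \emph{left square}, I would compute $\pi_U \circ \widehat{s}$ directly. By the definition of $\widehat{s}$, for $f \in \pi^{-1}(V)$ one has $\widehat{s}(f) = (f, s(\pi(f)))$ when $\Pe_{U,F}$ is identified with $\pi^*\Pe_{U,H}$, and $\pi_U$ is projection onto the second factor by definition of a pullback bundle. Hence $\pi_U(\widehat{s}(f)) = s(\pi(f))$, which is exactly $s \circ \pi$ applied to $f$. The only subtle point here is that one needs $\widehat{s}$ to land in $\Pe_{U,F}$ and not just in $\pi^*\Pe_{U,H}$, but this is guaranteed by the very isomorphism $\Pe_{U,F} \cong \pi^*\Pe_{U,H}$ on which the whole construction is based.

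For the \emph{right square}, I would pick $p \in \Pe_{U,F}$ and compute $\widehat{\pi}_F(\phi_{1,F}(p))$. By the definition of $\phi_{1,F}$ recalled in Remark \ref{drr}, we have $\phi_{1,F}(p) = [p,1]_{\det} \in \Pe_{1,F}$. Applying $\widehat{\pi}_F$, which is defined by $[v,z] \mapsto [\pi_U(v), z]$, yields $[\pi_U(p), 1]_{\det}$, which by the same Remark is precisely $\phi_{1,M}(\pi_U(p))$. So $\widehat{\pi}_F \circ \phi_{1,F} = \phi_{1,M} \circ \pi_U$, as required. One small sanity check to include is that $\widehat{\pi}_F$ is well-defined on equivalence classes modulo the $S^1$-action, which is immediate because $\pi_U$ is $U(n)$-equivariant and the $S^1$-action on $\Pe_{1,F}$ is just on the second factor.

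I do not expect any genuine obstacle; the only hazard is notational, since three different bundles over $M$ and three corresponding bundles over $F$ appear, each related through either $\pi^*$, an $l$-extension, or a $\det$-associated construction. The key is to fix once and for all the identification $\Pe_{U,F} \cong \pi^*\Pe_{U,H}$ used in the definition of $\widehat{\pi}_F$, and then both squares become tautological.
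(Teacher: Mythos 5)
Your verification is correct and coincides with the paper's own treatment: the paper offers no written argument beyond the remark that the statement follows by unwinding the definitions, which is exactly what you do, square by square. Both computations ($\pi_U \circ \widehat{s} = s \circ \pi$ via the pullback description, and $\widehat{\pi}_F \circ \phi_{1,F} = \phi_{1,M} \circ \pi_U$ via the explicit formulas for the $\det$-reductions and for $\widehat{\pi}_F$) are carried out as intended, including the equivariance check needed for well-definedness.
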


\begin{Proposition} \label{groko}
Let $A \in \Omega^1(F,i\R)$ be a connection on the $S^1-$bundle $F=\Pe_{1,M} \stackrel{\pi}{\rightarrow}M$. Then $\widehat{\pi}_F^*A \in \Omega^1(\Pe_{1,F},i\R)$ is a connection on the $S^1-$bundle $\Pe_{1,F} \stackrel{\pi_F}{\rightarrow}F$. Locally, $A$ and $\widehat{\pi}_F^*A$ are related as follows: Let $s \in \Gamma(V,\Pe_{U,H})$ and let $\widehat{s} \in \Gamma (\pi^{-1}(V),\Pe_{U,F})$ be the induced local section as in Proposition \ref{dum}. It holds that
\begin{align*}
\left( \widehat{\pi}_F^* A\right)^{\phi_{1,F}(\widehat{s})} = \pi^*\left(A^{\phi_{1,M}(s)}\right) \in \Omega^1(\pi^{-1}(V),i\R).
\end{align*}
\end{Proposition}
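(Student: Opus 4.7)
The plan is to verify Proposition \ref{groko} in two essentially separate steps: first checking that $\widehat{\pi}_F^*A$ satisfies the two defining axioms of a principal $S^1$-connection on the bundle $\Pe_{1,F} \stackrel{\pi_F}{\to} F$, and second extracting the local formula by a diagram chase built on Proposition \ref{dum}. The crucial structural input is that $\widehat{\pi}_F$ is an $S^1$-equivariant bundle map from the $S^1$-bundle $\pi_F:\Pe_{1,F}\to F$ to the $S^1$-bundle $F=\Pe_{1,M}\to M$, covering the projection $\pi:F\to M$. Once this is in place, the proposition reduces to a standard verification, and I do not expect any genuine obstacle.

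For the first claim, I would denote by $\widetilde{X}_{\xi}$ the fundamental vector field on $\Pe_{1,F}$ associated to $\xi \in i\mathbb{R}$, and by $X_{\xi}$ the corresponding field on $F=\Pe_{1,M}$. Equivariance of $\widehat{\pi}_F$ immediately gives $d\widehat{\pi}_F(\widetilde{X}_{\xi}) = X_{\xi}$, so
\begin{align*}
(\widehat{\pi}_F^*A)(\widetilde{X}_{\xi}) \;=\; A(X_{\xi}) \;=\; \xi,
\end{align*}
which is the Maurer--Cartan axiom. Moreover, since $S^1$ is abelian and $\widehat{\pi}_F$ commutes with the $S^1$-action, $R_z^*(\widehat{\pi}_F^*A) = \widehat{\pi}_F^*(R_z^*A) = \widehat{\pi}_F^*A$, which is the equivariance axiom. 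Hence $\widehat{\pi}_F^*A \in \Omega^1(\Pe_{1,F},i\mathbb{R})$ is indeed a connection on $\pi_F:\Pe_{1,F}\to F$.

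For the second claim, I would invoke Proposition \ref{dum}, which yields the commutativity relation
\begin{align*}
\widehat{\pi}_F \circ \phi_{1,F}(\widehat{s}) \;=\; \phi_{1,M}(s) \circ \pi
\end{align*}
on $\pi^{-1}(V)\subset F$. Pulling back $A$ along both sides and using naturality of the pullback gives
\begin{align*}
\bigl(\widehat{\pi}_F^*A\bigr)^{\phi_{1,F}(\widehat{s})}
\;=\; \bigl(\phi_{1,F}(\widehat{s})\bigr)^*\widehat{\pi}_F^*A
\;=\; \bigl(\widehat{\pi}_F \circ \phi_{1,F}(\widehat{s})\bigr)^*A
\;=\; \pi^*\bigl(\phi_{1,M}(s)\bigr)^*A
\;=\; \pi^*\bigl(A^{\phi_{1,M}(s)}\bigr),
\end{align*}
which is the asserted identity. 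The only part requiring any care is the $S^1$-equivariance of $\widehat{\pi}_F$ in the first step, but this is manifest from the explicit formula $[v,z]\mapsto [\pi_U(v),z]$ used to define it, so the whole argument is essentially bookkeeping.
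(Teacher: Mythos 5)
Your proof is correct and is exactly the kind of argument the paper has in mind: the paper offers no written proof, stating only that the statements follow by ``unwinding the definitions,'' and your verification of the two connection axioms via the $S^1$-equivariance of $\widehat{\pi}_F$ together with the pullback computation along the outer rectangle of the diagram in Proposition \ref{dum} is precisely that unwinding, carried out carefully.
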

Let us turn to spinor fields on $F$. The $Spin^c(2n+1)$-bundle $\Q^c_M \rightarrow M$ reduces to the $Spin^c(2n)$-bundle $\mathcal{Q}^c_H :=\Pe_{U,H}\times_l Spin^c(2n) \rightarrow M$. We introduce the reduced spinor bundle of $M$,
\[ S_H:=S_H^{g_{\theta}}:=\mathcal{Q}^c_H \times_{\Phi_{2n}} \Delta_{2n}^{\C} \cong \Pe_{U,H}\times_{\Phi_{2n} \circ l} \Delta_{2n}^{\C}. \]
This allows us to express the spinor bundle $S_F:= S_F^{h_{\theta}} \rightarrow F$ as
\begin{equation} \label{idt}
\begin{aligned}
S_F &= \mathcal{Q}^c_F \times_{\Phi_{1,2n+1}} \Delta_{1,2n+1}^{\C} \cong \pi^*\Pe_{U,H} \times_{\Phi_{1,2n+1} \circ l} \Delta^{\C}_{2n+1} \\
& \cong \pi^*S_H \oplus \pi^* S_H.
\end{aligned}
\end{equation}
The second step is purely algebraic and follows from the decomposition of $\Delta_{1,2n+1}^{\C}$ into the sum $\Delta_{2n}^{\C}\oplus \Delta_{2n}^{\C}$ of $Spin^c(2n) \hookrightarrow Spin^c(1,2n+1)$-representations as presented in \cite{bafe}, where $\R^{2n} \hookrightarrow \R^{2n+2}$ via $x \mapsto (0,0,x)$. Under the identification (\ref{idt}) we have (cf. \cite{bafe}, Proposition 18)
\begin{equation} \label{harvest}
\begin{aligned}
s_1 \cdot (\ph, \phi) &= (-\phi, -\ph), \\
s_2 \cdot (\ph, \phi) &= (-\phi, \ph), \\
X^* \cdot (\ph, \phi) &= (-X \cdot \ph, X \cdot \phi).
\end{aligned}
\end{equation}

This identification allows us to define a global section in $\pi^* S_H \oplus (F \times \{0\}) \subset S_F$ in analogy to the K\"ahler case: $u(-1,...,-1) \in \Delta_{2n}^{\C} $ is the (up to $S^1$-action) unique unit-norm spinor in the eigenspace of the K\"ahler form on $\R^{2n}$ to the eigenvalue $-i\cdot n$. For $s:V \subset M \rightarrow P_{U,H}$ a local section we set 
\begin{align*}
\ph(p):= [\phi_{c,F}(\widehat{s}(p)),u(-1,...,-1)], \text{ }p \in \pi^{-1}(V).
\end{align*}
By (\ref{fu}) this is independent of the choice of $s$. Thus, $\ph \in \Gamma(F,S_F)$ is defined. As last ingredient we introduce the connection
\begin{align*}
A:= \widehat{\pi}^*_F A^W + A^W \in \Omega^1(\Pe_{1,F},i\R)
\end{align*}
on the $S^1$-bundle $\Pe_{1,F} \stackrel{\pi_F}{\rightarrow} F$. This is to be read as follows: $\widehat{{\pi}}^*_F A^W$ is a connection on $\Pe_{1,F}$ by Proposition \ref{groko}. Any other connection is obtained by adding an element of $\Omega^1(F,i\R)$, which we choose to be the connection $A^W$ here, i.e. $A= \widehat{\pi}^*_F A^W + \pi_F^*A^W$.

\begin{satz} \label{feff}
The spinor field $\ph \in \Gamma(F,S^{h_{\theta}}_F)$ is a CCKS wrt. $A$, i.e. $\ph \in \text{ker }P^{A,h_{\theta}}$. The curvature $dA \in \Omega^2(F,i\R)$ is given by 
\[ dA = 2 \pi_{F \rightarrow M}^* Ric^W. \]
In particular, $\ph$ descends to a twistor spinor on a spin manifold iff the Tanaka Webster connection is Ricci-flat. The associated vector field $V_{\ph}$ satisfies
\begin{enumerate}
\item $V_{\ph}$ is a regular isotropic Killing vector field,
\item $\nabla_{V_{\ph}}^A \ph = \frac{1}{\sqrt{2}}i \ph$,
\item $V_{\ph}$ is normal, i.e. $V_{\ph} \invneg W^{h_{\theta}} = 0, V_{\ph} \invneg C^{h_{\theta}} = 0$, \\Furthermore, $K^{h_{\theta}}(V_{\ph},V_{\ph})= \text{const}. < 0,$
\item $V_{\ph} \invneg dA = 0.$
\end{enumerate}
\end{satz}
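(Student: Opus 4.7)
The plan adapts Baum's spin-case construction of the Fefferman conformal twistor spinor \cite{bafe} to the $Spin^c$ setting, with the Tanaka--Webster trace connection $A^{W}$ replacing the spin lift of the Levi--Civita connection on $M$. All assertions are local, so I fix a section $s\in\Gamma(V,\Pe_{U,H})$, take the induced section $\widehat{s}\in\Gamma(\pi^{-1}(V),\Pe_{U,F})$ from Proposition~\ref{dum}, and represent $\ph$ on $\pi^{-1}(V)$ by the constant algebraic spinor $u(-1,\ldots,-1)\in\Delta_{2n}^{\C}$. The crucial algebraic fact to be exploited repeatedly is that $u(-1,\ldots,-1)$ is fixed by the $\mathfrak{su}(n)$-action and picks up only the trace of a $\mathfrak{u}(n)$-element, cf.~(\ref{fu}).

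The heart of the argument is the evaluation of $\nabla^{A}_{X}\ph$ for $X$ in the adapted pseudo-orthonormal frame $(s_{1},s_{2},X_{i}^{*},JX_{i}^{*})$ via the local formula (\ref{lofo}). The $\mathfrak{so}$-part coming from $\omega^{h_{\theta}}$ decomposes, by the O'Neill-type relations between $\nabla^{h_{\theta}}$ and the horizontal lift of $\nabla^{W}$ as derived in detail in \cite{bafe}, into $\mathfrak{u}(n)$-valued horizontal contributions plus contact-torsion and vertical $N$-direction terms involving $d\theta$. The $i\R$-part is controlled by Proposition~\ref{groko}: $\widehat{\pi}_{F}^{*}A^{W}$ reduces locally to $\pi^{*}\mathrm{Tr}(s^{*}\omega^{W})$, while $\pi_{F}^{*}A^{W}$ adds a vertical copy. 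The $\mathfrak{u}(n)$-horizontal pieces act on $u(-1,\ldots,-1)$ only via their trace, which is designed to be cancelled by the trace terms coming from $A$. What remains, after applying the Clifford rules (\ref{harvest}) on the split spinor bundle $\pi^{*}S_{H}\oplus\pi^{*}S_{H}$, organises into an expression of the form $X\cdot\psi$ for a single spinor $\psi$, from which one reads off $D^{A}\ph$ and immediately verifies the twistor equation (\ref{tatar}).

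The curvature identity $dA=2\pi_{F\to M}^{*}\mathrm{Ric}^{W}$ then follows by differentiating $A=\widehat{\pi}_{F}^{*}A^{W}+\pi_{F}^{*}A^{W}$: both summands have the same basic curvature pulled back via the two distinct projections of Proposition~\ref{dum}, and the trace of the Tanaka--Webster curvature is $\mathrm{Ric}^{W}$ in the conventions of Section~\ref{crg}. In particular, if $\mathrm{Ric}^{W}\equiv 0$ then $A$ becomes flat and, after a gauge transformation via Proposition~\ref{huuk}, $\ph$ descends to an honest twistor spinor on a spin structure on $F$, giving the claimed criterion. For the properties of $V_{\ph}$, evaluation of the bilinear $\alpha_{\ph}^{1}$ via (\ref{harvest}) together with the purity of $u(-1,\ldots,-1)$ forces $V_{\ph}$ to be a positive constant multiple of the fundamental vector field $N$; this immediately yields (i), since $V_{\ph}$ is then nowhere zero, Killing because $h_{\theta}$ is right-$S^{1}$-invariant, and isotropic by the $\pi^{*}\theta\circ A_{\theta}$-term in $h_{\theta}$, while (ii) falls out of the Step-2 computation specialised to $X=V_{\ph}\propto N$. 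For (iv) I use $V_{\ph}\invneg dA=2\pi^{*}(T\invneg\mathrm{Ric}^{W})=0$ since the standard Tanaka--Webster identity forces $\mathrm{Ric}^{W}$ to have no leg along the Reeb vector $T$. Combining (iv) with Proposition~\ref{poll} gives $V_{\ph}\invneg W^{h_{\theta}}=0$, and Proposition~\ref{ed} together with one further covariant differentiation yields $V_{\ph}\invneg C^{h_{\theta}}=0$. Pairing (\ref{2p}) with $\ph$ at $X=V_{\ph}$, using (ii) and the constancy of $\langle\ph,\ph\rangle_{S^{h_{\theta}}}$ along $V_{\ph}$, extracts a constant value for $K^{h_{\theta}}(V_{\ph},V_{\ph})$ whose sign is fixed by the positivity of the Levi form.

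The main obstacle is the bookkeeping in the local computation of $\nabla^{A}\ph$: tracking every contribution of $\nabla^{h_{\theta}}$ in terms of $\nabla^{W}$ and the contact torsion, checking that the undesired $\mathfrak{u}(n)$-horizontal pieces either annihilate $u(-1,\ldots,-1)$ via the $\mathfrak{su}(n)$-action or cancel against the trace terms in $A$, and keeping the signs straight through the Clifford action (\ref{harvest}) on the split spinor bundle. It is precisely this cancellation that dictates the particular form $A=\widehat{\pi}_{F}^{*}A^{W}+\pi_{F}^{*}A^{W}$ of the $S^{1}$-connection.
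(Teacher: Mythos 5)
Your outline of the core computation---representing $\ph$ by $u(-1,\ldots,-1)$ over a section $\widehat{s}$, evaluating $\nabla^A\ph$ through (\ref{lofo}) with the connection coefficients of $\nabla^{h_\theta}$ taken from \cite{bafe}, cancelling the $\mathfrak{u}(n)$-trace terms against $A=\widehat{\pi}_F^*A^W+\pi_F^*A^W$, reading off the twistor equation via (\ref{harvest}), and obtaining $dA=2\pi^*Ric^W$ by differentiating the two summands of $A$---is exactly the paper's route, as is the identification of $V_\ph$ with a constant multiple of $N$ and the deduction of items 1 and 2. The genuine gap is in item 3. Proposition \ref{poll} states that $V_\ph \invneg W^{h_\theta}=0$ is \emph{equivalent} to $(Z \invneg \frac{1}{i}dA)^{\sharp} \invneg \alpha_{\ph}^3 = 0$ for \emph{all} $Z\in TF$; knowing only $V_\ph \invneg dA=0$ (your item 4) does not give you this hypothesis, so ``combining (iv) with Proposition \ref{poll}'' does not yield the Weyl condition. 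Likewise Proposition \ref{ed} only gives $L_{V_\ph}\frac{1}{i}dA=0$, and you do not explain how ``one further covariant differentiation'' would produce $V_\ph \invneg C^{h_\theta}=0$; this is not a routine consequence. The paper avoids both problems by observing that $V_\ph=-\sqrt{2}N$ is a fundamental vertical vector field of the Fefferman fibration and that the relations $V\invneg W^{h_\theta}=0$, $V\invneg C^{h_\theta}=0$ and $K^{h_\theta}(V,V)=\text{const}<0$ are known properties of any such field on a Fefferman space (cited from \cite{gr}). If you insist on a spinorial derivation you would have to compute $\alpha_\ph^3$ explicitly and verify the hypothesis of Proposition \ref{poll}, and then supply an independent argument for the Cotton condition; your sketch for $K^{h_\theta}(V_\ph,V_\ph)$ along the lines of Theorem \ref{converse} is plausible but also needs the fact $V_\ph\cdot\ph=0$, which you never record.

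A smaller but real slip occurs in item 4: you write $V_\ph\invneg dA = 2\pi^*(T\invneg Ric^W)$ and then appeal to a Tanaka--Webster identity for $Ric^W$. Since you have already identified $V_\ph$ as a multiple of the vertical field $N$, one has $d\pi(V_\ph)=0$, so $V_\ph$ inserts trivially into the basic form $dA=2\pi^*Ric^W$; no property of $Ric^W$ along the Reeb direction is needed, and the displayed identity is not correct as written (its left-hand side vanishes identically for vertical $V_\ph$). This trivial argument is the one the paper uses.
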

\textit{Proof. }Applying the local formula (\ref{lofo}) to $\ph$ and using Proposition \ref{groko}, we find for a local section $s=(X_1,...,X_{2n},T) \in \Gamma(V,\Pe_{U,H})$ and a vector $Y \in \Gamma(\pi^{-1}(V),TF)$ that
\begin{equation} \label{erty}
\begin{aligned}
\nabla_Y^A \ph_{|\pi^{-1}(V)} =& -\frac{1}{2}h_{\theta}(\nabla_Y^{h_{\theta}}s_1,s_2) s_1 \cdot s_2 \cdot \ph - \frac{1}{2} \sum_{k=1}^{2n} h_{\theta}(\nabla_Y^{h_{\theta}}s_1,X^*_k) s_1 \cdot X_k^* \cdot \ph \\
&+\frac{1}{2} \sum_{k=1}^{2n} h_{\theta}(\nabla_Y^{h_{\theta}}s_2,X^*_k) s_2 \cdot X_k^* \cdot \ph + \frac{1}{2} \sum_{k<l} h_{\theta}(\nabla_Y^{h_{\theta}}X_k^*,X^*_l) X_k^* \cdot X_l^* \cdot \ph\\
&+\frac{1}{2}\left(A^W\right)^{\phi_{1,M}(s)}(d\pi (Y))\cdot \ph +\frac{1}{2}A^W(Y) \cdot \ph,
\end{aligned}
\end{equation}
where for $X \in \mathfrak{X}(M)$, the vector field $X^* \in \mathfrak{X}(F)$ is the horizontal lift wrt. $A_{\theta}$ (not $A^W$!). The calculation of the local connection 1-forms of $\nabla^{h_{\theta}}$ and their pointwise action on the spinor $u(-1,...,-1)$ has been carried out in \cite{bafe}. Inserting these expressions into (\ref{erty}), we arrive at
\begin{align*}
\nabla_{N}^A \ph_{|\pi^{-1}(V)}&=\left(-i \frac{n}{4}\cdot \ph + \frac{1}{2}A^W(N)\cdot \ph, 0 \right), \\
\nabla_{T^*}^A \ph_{|\pi^{-1}(V)}&= \left(i \frac{R^W}{4(n+1)}\ph - \frac{1}{2}\text{Tr } \omega_s(T) + \frac{1}{2}\left(\left(A^W\right)^{\phi_{1,M}(s)}(T)+A^W(T^*)\right) \cdot \ph ,0\right), \\
\nabla_{X^*}^A \ph_{|\pi^{-1}(V)}&= \left(- \frac{1}{2}\text{Tr } \omega_s(T) + \frac{1}{2}\left(\left(A^W\right)^{\phi_{1,M}(s)}(X)+A^W(X^*)\right) \cdot \ph ,0\right) -\frac{1}{4}(X \invneg d\theta)^* \cdot T^* \cdot \ph.\\
\end{align*}
Here, $\omega_s:=s^*A^W \in \Omega^1(V,\mathfrak{u}(n))$. By definition, we have that
\begin{align*}
A^W(N)&=i \cdot \frac{n+2}{2}, \\
\left(A^W\right)^{\phi_{1,M}(s)}(T)+A^W(T^*)&=\text{Tr }\omega_s(T)-i \frac{R^W}{2(n+1)}, \\
\left(A^W\right)^{\phi_{1,M}(s)}(X)+A^W(X^*)&=\text{Tr }\omega_s(X).
\end{align*}
Inserting this and noticing that for $X \in \{X_1,...,X_{2n} \}$ the 1-form $X \invneg  d\theta$ acts on the spinor bundle by Clifford multiplication with $J(X)$, we arrive at
\begin{align*}
\nabla^A_N \ph &= \frac{1}{2}i \ph, \\
\nabla^A_{T^*} \ph &= 0, \\
\nabla^A_{X^*} \ph &= \left(0, -\frac{\sqrt{2}}{4}J(X) \cdot \ph \right).
\end{align*}
Using (\ref{harvest}), we conclude that
\begin{align*}
-s_1 \cdot \nabla_{s_1}^A \ph = s_2 \cdot \nabla_{s_2}^A \ph = X^* \cdot \nabla_{X^*}^A \ph = \left(0, \frac{1}{2 \sqrt{2}} i \ph \right),
\end{align*}
where $X \in \{X_1,...,X_{2n} \}$. This shows that $h(Y,Y) Y \cdot \nabla^A_Y \ph$ is independent of the vector $Y$ with length $\pm 1$, i.e. $\ph \in \text{ker }P^{A}$. It is straightforward to calculate
\begin{align*}
V_{\ph} = - \langle s_1 \cdot \ph, \ph \rangle_{S^{h_{\theta}}} s_1 + \langle s_2 \cdot \ph, \ph \rangle_{S^{h_{\theta}}} s_2 + \sum_{k=1}^{2n} \langle X_k^* \cdot \ph, \ph \rangle_{S^{h_{\theta}}} X_k^* = -s_1 -s_2 = - \sqrt{2} N.
\end{align*}
We conclude that $V_{\ph}$ is regular and isotropic. It is shown in \cite{bafe}, Proposition 19 that the vertical vector field $N$ is Killing. The claimed relation of $V_{\ph}$ to the curvature tensors of $h_{\theta}$ is true for any fundamental vertical vector field on a Fefferman space (see \cite{gr}).\\
It remains to calculate $dA$ and to prove $4.$: Let $s \in \Gamma(V,\Pe_{U,H})$. It holds that (cf. \cite{bafe}) $dA^W = \text{Tr }d\omega_s = Ric^W \in \Omega^2(M,i\R)$. Considered as a 2-form on $F$, the curvature $dA$ is thus using Proposition \ref{groko} given by
\begin{align*}
dA &= d\left(\widehat{\pi}^*_F A^W\right)^{\phi_{1,F}(\widehat{s})} + \pi^*dA^W = \pi^*d \text{Tr }\omega_s + \pi^*Ric^W \\
&= 2 \pi^* Ric^W
\end{align*}
As $dA$ is the lift of a 2-form on $M$, it follows immediately that the fundamental vector field $V_{\ph} = -\sqrt{2}N$ inserts trivially into $dA$.
$\hfill \Box$
\begin{remark}
Generically, we find only one CCKS on the Fefferman space. One can define another natural global section in $S_F$ in analogy to the spin case in \cite{bafe}. However, there is in general no $S^1-$connection which turns it into a CCKS. This is in complete analogy to the K\"ahler case: On a K\"ahler manifold there is a second natural global section in the spinor bundle constructed out of the eigenspinor to the other extremal eigenvalue of the K\"ahler form on spinors which in general is no $Spin^c$-parallel spinor (cf. \cite{mor}).
\end{remark}

As in the $Spin-$case we can also prove a converse of the last statement:
\begin{satz} \label{converse}
Let $(B^{1,2n+1},h)$ be a Lorentzian $Spin^c$-manifold. Let $A \in \Omega^1(\Pe_1,i\R)$ be a connection on the underlying $S^1$-bundle and let $\ph \in \Gamma(S^g)$ be a nontrivial CCKS wrt. $A$ such that
\begin{enumerate}
\item The Dirac current $V:=V_{\ph}$ of $\ph$ is a regular isotropic Killing vector field and $V \cdot \ph = 0$, 
\item $V \invneg W^h=0$ and $V \invneg C^h = 0$, i.e. $V$ is a normal conformal vector field,
\item $V \invneg dA = 0$,
\item $\nabla^A_{V} \ph = i c \ph$, where $c= \text{const} \in \R \backslash \{0 \}$.
\end{enumerate}
Then $(B,h)$ is a $S^1$-bundle over a strictly pseudoconvex manifold $(M^{2n+1},H,J,\theta)$ and $(B,h)$ is locally isometric to the Fefferman space $(F,h_{\theta})$ of $(M,H,J,\theta)$.
\end{satz}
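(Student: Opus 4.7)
The strategy is to run the classical Baum--Leitner argument from \cite{bafe} in the $Spin^c$-category, using the extra hypothesis $V \invneg dA = 0$ to absorb the curvature of $A$ into the construction. Since $V$ is a regular isotropic Killing vector field without zeros, locally we may form the quotient $M^{2n+1} := B/\langle V \rangle$, so that $B \to M$ is an $S^1$- (or $\R$-) principal bundle whose vertical distribution is spanned by $V$. The first step is to identify the CR data on $M$. The distribution $V^{\perp} \subset TB$ is $V$-invariant by the Killing property, it contains $V$ (since $V$ is isotropic), and one checks using $V \invneg dV^{\flat} = 0$ (a consequence of $V$ being Killing and of length zero) that it descends to a corank-$1$ subbundle $H \subset TM$ of real rank $2n$. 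A nowhere vanishing contact form $\theta \in \Omega^1(M)$ is fixed by the normalization coming from $\nabla^A_V \ph = ic\ph$ (which also fixes the conformal gauge).

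Next I would construct the almost complex structure $J$ on $H$ from the spinor. The condition $V \cdot \ph = 0$ means that $\ph$ is a pure-type spinor for $V$, and hence defines a reduction of the transverse $Spin^c(2n)$-structure along the fibres; equivalently, the $2$-form bilinear $\alpha_{\ph}^2$ restricted to $V^{\perp}/\langle V \rangle$ gives a nondegenerate $2$-form whose associated endomorphism (via $L_\theta$) squares to $-\mathrm{id}_H$, defining $J$. Positivity of $L_\theta := d\theta(\cdot, J\cdot)$ on $H$ (strict pseudoconvexity) follows from $\langle \ph,\ph \rangle_{S^g}$ having the right sign together with the CCKS equation applied to directions in $H$, together with $K^h(V,V) = \text{const.}<0$ coming from item 3 of Theorem \ref{feff}, which is forced by $V$ being a nowhere zero normal conformal vector field (see \cite{leihabil,lcc}).

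The hard step is verifying the integrability of $(H,J)$, i.e. $N_J \equiv 0$ and the invariance of $H$ under $[J\cdot,\cdot]+[\cdot,J\cdot]$. Here I would differentiate the defining equation $V \cdot \ph = 0$ using the twistor equation \eqref{tatar} and the formula \eqref{2p} for $\nabla^A D^A \ph$, together with $V \invneg W^h = V \invneg C^h = 0$ and $V \invneg dA = 0$, to show that the covariant derivatives $\nabla^A_X \ph$ for $X \in H$ stay in the $J$-holomorphic subspace; this is precisely the infinitesimal condition guaranteeing the vanishing of $N_J$, mirroring the Spin computation in \cite{bafe} but with the extra $dA$-terms in \eqref{dat} eliminated by the assumption $V \invneg dA = 0$. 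With $(M,H,J,\theta)$ in hand, the Tanaka--Webster connection $\nabla^W$ and its trace connection $A^W$ on $\Pe_{1,M}$ are canonically determined, and the Fefferman construction of Section \ref{crg} produces a Lorentzian metric $h_{\theta}$ on the associated $S^1$-bundle.

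The final step is the comparison $h = \lambda\, h_{\theta}$. Using a $Spin^c$-adapted frame of the form $(s_1,s_2,X_1^*,JX_1^*,\ldots,X_n^*,JX_n^*)$ as in the proof of Theorem \ref{feff}, the explicit local formula \eqref{lofo} for $\nabla^A \ph$ plus the already established relations (the length of $V$, the condition $\nabla^A_V \ph = ic\ph$, the action of $H^*$-directions) uniquely pin down the horizontal lifts and the cross term $\pi^*\theta \circ A_\theta$ in $h_\theta$. Conformal covariance of the CCKS equation (Proposition \ref{huuk} and the conformal transformation laws listed before it) then allows us to rescale $\theta$ so that $h = h_\theta$ exactly, after which uniqueness of the connection satisfying $\nabla^A_V \ph = ic\ph$ and $V \invneg dA = 0$ identifies $A$ with $\widehat{\pi}_F^* A^W + \pi_F^* A^W$ as in Theorem \ref{feff}. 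The main technical obstacle is the integrability argument for $J$, where careful bookkeeping of the $Spin^c$-curvature terms is required to see that the hypotheses $V \invneg dA = 0$ and $\nabla^A_V \ph = ic\ph$ are exactly what is needed to reproduce the clean Spin case.
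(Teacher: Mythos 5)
Your proposal diverges from the paper's argument in a way that matters, and it contains a genuine gap. The paper does not reconstruct the CR data $(H,J,\theta)$ from the spinor at all: it verifies the hypotheses of Sparling's characterization of Fefferman metrics (Graham, \cite{gr}) --- $V$ regular isotropic Killing, $V \invneg W^h = 0$, $V \invneg C^h = 0$, and $K^h(V,V)=\text{const.}<0$ --- and then quotes that theorem, which already delivers the strictly pseudoconvex structure on $B/S^1$ and the local isometry with $(F,h_\theta)$. The only nontrivial step is the Schouten condition, obtained by computing $V\cdot\nabla^A_V D^A\ph$ in two ways: via \eqref{2p}, where the $dA$-terms drop out because $V$ is lightlike and $V\invneg dA=0$, giving $-nK^h(V,V)\ph$; and via the twistor equation with $V\cdot\ph=0$ and $\nabla^A_V\ph=ic\ph$, giving $nc^2\ph$; hence $K^h(V,V)=-c^2<0$. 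This is where hypotheses 1, 3 and 4 (with $c\neq 0$) are actually consumed. Your proposal instead asserts that $K^h(V,V)=\text{const.}<0$ is ``forced by $V$ being a nowhere zero normal conformal vector field'', which is false: a parallel isotropic vector field on a Brinkmann space (case 1 of Theorem \ref{clar}, where $\nabla^A_V\ph=0$, i.e.\ $c=0$) is normal conformal without any such Schouten condition. Since your positivity/strict-pseudoconvexity argument rests on this claim, that part of your proof collapses as written.

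Beyond that, the remaining steps of your plan --- building $J$ from $\alpha_\ph^2$ on $V^{\perp}/\langle V\rangle$, proving $N_J\equiv 0$ by differentiating $V\cdot\ph=0$, and matching $h$ with $h_\theta$ via the frame computation of Theorem \ref{feff} --- are exactly the heavy lifting that the appeal to Sparling/Graham is designed to avoid, and they are only sketched: the assertion that ``$\nabla^A_X\ph$ staying in the $J$-holomorphic subspace is precisely the infinitesimal condition guaranteeing $N_J=0$'' is not justified, and the final identification of the metric and of $A$ with $\widehat{\pi}_F^*A^W+\pi_F^*A^W$ is stated rather than derived (note the theorem only claims a local isometry of metrics, not an identification of the $S^1$-connection). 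If you want to keep your constructive route, you must first prove the Schouten identity $K^h(V,V)=-c^2$ from the spinorial hypotheses as above, and then either carry out the integrability and isometry computations in full or, as the paper does, hand the problem to Sparling's characterization once its hypotheses are established.
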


\textit{Proof. }
The proof runs through the same lines as in the $Spin-$case in \cite{bafe} and references given there: 
First, we prove that the Schouten tensor $K:=K^h$ of $(B,h)$ satisfies
\begin{align}
K(V,V) = \text{const.} < 0. \label{ho}
\end{align}
To this end, we calculate using (\ref{2p})
\begin{align*}
V \cdot \nabla_V^A D^A \ph = \frac{n}{2}V \cdot K(V) + c_1 \cdot V \cdot (V \invneg dA) \cdot \ph + c_2 \cdot V \cdot (V^{\flat} \wedge dA) \cdot \ph,
\end{align*}
where the real constants $c_{1,2}$ are specified by (\ref{2p}). However, as $V$ is lightlike and $V \invneg dA = 0$, the last two summands vanish by (\ref{clid}). Consequently, \[
V \cdot \nabla_V^A D^A \ph = \frac{n}{2}V \cdot K^g(V) \stackrel{1.}{=} -n \cdot K(V,V) \cdot \ph. \] 
On the other hand, the twistor equation and our assumptions yield  
\[V \cdot \nabla_V^A D^A \stackrel{1.}{=} \nabla_V^A (V \cdot D^A) = -n \cdot \nabla_V^A \nabla_V^A \ph \stackrel{4.}{=} nc^2 \cdot \ph. \] 
Consequently, $K(V,V) = -c^2$ and (\ref{ho}) holds.\\ 
Regularity of $V$ implies that there is a natural $S^1$-action on $B$,
\begin{align*}
B \times S^1 \ni (p,e^{it}) \mapsto \gamma^V_{t \cdot \frac{L}{2\pi}}(p) \in B,
\end{align*}
where $\gamma_t^V(p)$ is the integral curve of $V$ through $p$ and $L$ is the period of the integral curves. Thus, $M:=B/S^1$ is a $2n+1$-dimensional manifold and $V$ is the fundamental vector field defined by the element $\frac{2\pi}{L}i \in i\R$ in the $S^1$-principal bundle $(B,\pi,M;S^1)$.\\
As $V$ is by assumption normal and satisfies (\ref{ho}), Sparlings characterization of Fefferman spaces applies (see \cite{gr}), yielding that there is a strictly pseudoconvex pseudo-Hermitian structure $(H,J,\theta)$ on $M$ such that $(B,h)$ is locally isometric to the Fefferman space $(F,h_{\theta})$ of $(M,H,J,\theta)$. For more details regarding the construction of the local isometries $\phi_U : B_{|U} \rightarrow F_{|U}$ we refer to \cite{bafe,gr}.
$\hfill \Box$

\section{A partial classification result for the Lorentzian case} \label{pcr}
We give a complete description of Lorentzian manifolds admitting CCKS under the additional assumption that $V_{\ph}$ is normal. The proof closely follows the $Spin-$case from \cite{leihabil}. For a 1-form $\alpha \in \Omega^1(M)$ we define the rank of $\alpha$ to be ${rk}(\alpha):= \text{max} \{ n \in \mathbb{N}\mid \alpha \wedge (d\alpha)^n \neq 0 \}.$

\begin{satz} \label{clar}
Let $(M,g)$ be a Lorentzian $Spin^c$-manifold admitting a CCKS $\ph \in \Gamma(S^g)$ wrt. a connection $A \in \Omega^1(\Pe_1,i\R)$. Assume further that $V:=V_{\ph}$ is a normal conformal vector field. Then locally off a singular set exactly one of the following cases occurs:
\begin{enumerate}
\item It holds that $rk(V^{\flat})=0$ and $||V||^2_g = 0$.\\ $\ph$ is locally conformally equivalent to a $Spin^c$-parallel spinor on a Brinkmann space.
\item It holds that $rk(V^{\flat})=0$ and $||V||^2_g < 0$.\\ Locally, $[g]=[-dt^2+h]$, where $h$ is a Riemannian metric admitting a $Spin^c$-parallel spinor. The latter metrics are completely classified, cf. \cite{mor}.
\item $n$ is odd and  $rk(V^{\flat})=(n-1)/2$ is maximal.\\ $(M,g)$ is locally conformally equivalent to a Lorentzian Einstein Sasaki manifold. There exist geometric $Spin$-Killing spinors $\ph_{1,2}$ on $(M,g)$ which might be different from $\ph$, but satisfying $V_{\ph_{1,2}} = V$.
\item $n$ is even and $rk(V^{\flat})=(n-2)/2$ is maximal.\\ In this case, $(M,g)$ s locally conformally equivalent to a Fefferman space.
\item If none of these cases occurs, there exists locally a product metric $g_1 \times g_2 \in [g]$, where $g_1$ is a Lorentzian Einstein Sasaki metric on a space $M_1$ admitting a geometric Killing spinor $\ph_1$ and $g_2$ is a Riemannian Einstein metric on a space $M_2$ such that $M=M_1 \times M_2$ and $V=V_{\ph_1}$
\end{enumerate}
Conversely, given one of the above geometries with a CCKS of the mentioned type, the associated Dirac current $V$ is always normal.
\end{satz}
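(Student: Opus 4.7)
The plan is to follow closely the strategy used for the analogous $Spin$-case result in \cite{leihabil}, with the additional burden of tracking the $S^1$-connection $A$ and its curvature $dA$ throughout the argument.

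First, I would invoke the classification of Lorentzian geometries admitting a normal conformal vector field, as developed in \cite{leihabil,lcc,al,baju}. This classification sorts such manifolds into the five geometric types listed in the theorem, according to the causal character of $V$ and the rank of $V^{\flat}$. In each type there is a distinguished local metric $g_0 \in [g]$ in which $V$ takes a canonical form: parallel and lightlike (Brinkmann) for Case 1, parallel and timelike (static product) for Case 2, the Reeb-type vector field on an (Einstein-)Sasaki factor for Cases 3 and 5, and the fundamental vertical vector field of a Fefferman fibration for Case 4. The assumptions $V \invneg W^g = 0$ and $V \invneg C^g = 0$ are exactly the input of those classification results, so the geometric skeleton of the theorem is immediate; what remains is to upgrade the classification spinorially.

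The spinorial step is to transport $\ph$ to the distinguished metric via the conformal covariance of $P^{A,g}$, obtaining a CCKS $\widetilde{\ph} \in \ker P^{A,g_0}$ with the same connection $A$ on $\Pe_1$. I would then show that, in conjunction with the normality of $V$, the twistor equation in $g_0$ forces $\widetilde{\ph}$ to satisfy a simpler equation: in Cases 1 and 2 that $D^{A,g_0} \widetilde{\ph} = 0$, so $\widetilde{\ph}$ becomes $\nabla^A$-parallel; in Cases 3 and 5 a $Spin^c$-Killing equation on the Lorentzian Einstein-Sasaki factor; and in Case 4 precisely the hypotheses of Theorem \ref{converse}, which then finishes that case. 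The analysis runs through the differential system (\ref{noco}) for the forms associated to $\ph$ (whose right-hand side involves $dA$ and is controlled by $V \invneg dA$ via Proposition \ref{ed}), together with Proposition \ref{poll} and the Schouten-tensor identity $K^{g_0}(V,V) = \text{const.}$, established along the lines of the corresponding step in the proof of Theorem \ref{converse}.

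The main obstacle will be that, unlike for ordinary twistor spinors, $dA$ does not vanish a priori and enters nontrivially both the integrability conditions (\ref{int1}) and the system (\ref{noco}). In Cases 1, 2, 3, and 5 the output should really be a $Spin^c$-parallel or $Spin^c$-Killing spinor with flat auxiliary connection, so one must show that the remaining $dA$ can locally be gauged away. For this I would exploit the gauge freedom of Proposition \ref{huuk}: because $V \invneg dA = 0$ in these cases (via Proposition \ref{poll} or direct computation) and $dA$ is $V$-invariant by Proposition \ref{ed}, $dA$ descends to a closed form on the local leaf space of $V$, so one can choose $f = e^{i\tau/2} \in C^{\infty}(M,S^1)$ with $d\tau$ annihilating the relevant component of $A$. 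Then $f \ph$ is a CCKS for a flat connection, i.e.\ an ordinary twistor spinor, and standard $Spin$-results then apply.

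For the converse, one verifies case by case that each of the five geometries carries a CCKS whose Dirac current realises the prescribed normal conformal vector field. Case 4 is Theorem \ref{feff}. Cases 1 and 2 follow from the classification of $Spin^c$-parallel spinors in \cite{mor} together with the fact that the Dirac current of a parallel spinor is parallel, hence trivially normal conformal. Cases 3 and 5 follow from the existence of geometric Killing spinors on Lorentzian Einstein-Sasaki manifolds and their Riemannian Einstein products with Einstein manifolds, whose associated vector field is proportional to the Reeb field and is classically known to be normal conformal.
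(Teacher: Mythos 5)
There are two genuine gaps. First, your opening step assumes that the classification of Lorentzian geometries admitting a normal conformal vector field already sorts them into exactly the five types of the theorem. It does not: a normal conformal vector field corresponds to a parallel tractor 2-form, i.e.\ a 2-form in $\Lambda^2\R^{2,n}$ fixed by the conformal holonomy, and the $SO^+(2,n)$-orbit types of such 2-forms are far more numerous than five (decomposable spacelike or timelike-spacelike planes, generic non-decomposable forms, etc.), yielding further geometries not on the list. The paper obtains the five-case alternative only after an extra spinorial step: normality of $V$ makes the right-hand side of (\ref{noco}) vanish, the resulting parallel tractor 2-form is then shown (as in \cite{ns}, using that its components are pointwise spinor bilinears of $\ph$ and $D^A\ph$) to be of the form $\alpha^2_{\chi}$ for a spinor $\chi\in\Delta_{2,n}$, and it is the classification of \emph{spinor-square} 2-forms in signature $(2,n)$ from \cite{leihabil} that produces precisely the five orbit types. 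Without this restriction your case distinction is incomplete (causality of the Dirac current alone does not rule out, e.g., non-spinorial non-decomposable holonomy-invariant 2-forms).

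Second, the proposed treatment of $dA$ cannot work: the gauge freedom of Proposition \ref{huuk} changes $A$ into $A-i\,d\tau$, so $dA$ is gauge invariant and a non-flat connection can never be made flat this way. Moreover, reducing to ordinary $Spin$-twistor spinors is not what the theorem asserts: in cases 1 and 2 the conclusion is a genuinely $Spin^c$-parallel spinor (e.g.\ $-dt^2+h$ with $h$ Riemannian carrying a $\nabla^A$-parallel spinor as classified in \cite{mor}, where $dA\neq 0$ in general), in case 4 the Fefferman space has $dA=2\pi^*Ric^W\neq 0$ generically, and in cases 3 and 5 the paper does not convert $\ph$ at all but invokes the known existence (\cite{boh}) of geometric $Spin$-Killing spinors $\ph_{1,2}$, possibly different from $\ph$, with $V_{\ph_{1,2}}=V$. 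What the paper actually proves spinorially is, in case 2, that $D^A\ph=0$ in the Ricci-flat scale, via differentiating $\mathrm{Re}\langle D^A\ph,\ph\rangle=0$ and killing the $dA$-terms with Proposition \ref{poll}, not via any gauge argument. Finally, in your converse the claim that a parallel Dirac current is ``trivially normal conformal'' is unjustified: a parallel vector field need not insert trivially into $W^g$ and $C^g$; the paper derives this from the curvature identity $Ric(X)\cdot\ph=\tfrac12 (X\invneg dA)\cdot\ph$ for $\nabla^A$-parallel spinors together with Proposition \ref{poll} and its analogue for the Cotton tensor.
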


\begin{proof}
The condition that $V$ is normal is equivalent to say that $\alpha_{\ph}^1$ is a normal conformal Killing 1-form (cf. Remark \ref{tdf}), which means that the RHS in (\ref{noco}) vanishes. Using tractor calculus for conformal geometries (cf. \cite{cs,baju,leihabil}), we conclude that there exists a 2-form $\alpha \in \Lambda^2_{2,n}$ which is fixed by the conformal holonomy representation $Hol(M,c) \subset SO^+(2,n)$. The system of equations (\ref{noco}) allows us to conclude as in \cite{ns} that $\alpha = \alpha_{\chi}^2$ for a spinor $\chi \in \Delta_{2,n}$. 2-forms induced by a spinor in signature $(2,n)$ have been classified in \cite{leihabil} and the geometric meaning of a holonomy-reduction imposed by such a  fixed $\alpha_{\chi}^2$ is well-understood. The following possibilities can occur:\\
\newline
$\alpha=l_1^{\flat} \wedge l_2^{\flat}$ for $l_1,l_2$ mutually orthogonal lightlike vectors. Using nc-Killing form theory as in the proof of Theorem 10 in \cite{leihabil} we conclude that this precisely corresponds to the first case of Theorem \ref{clar} and that there is locally a metric such that $V$ is parallel. \cite{bl} shows that also the spinor itself is $\nabla^A$- parallel in this case.\\
\newline
$\alpha = l^{\flat} \wedge t^{\flat}$, where $l$ is a lightlike vector and $t$ a orthogonal timelike vector. Using \cite{nc} it follows that there is locally a Ricci-flat metric in the conformal class on which $V$ is parallel. By constantly rescaling the metric, we may assume that $||V||^2=-1$. We have to show that the spinor itself is parallel in this situation. To this end, we calculate:
\begin{align*}
0 = Vg(V,V) = V\langle V \cdot \ph, \ph \rangle = -\frac{1}{n} (\langle V^2 \cdot D^A \ph, \ph \rangle + \langle V \cdot \ph, V \cdot D^A \ph \rangle) =-\frac{2}{n}\text{Re}\langle D^A \ph, \ph \rangle.
\end{align*}
We differentiate this function wrt. an arbitrary vector $X$, use $K^g=0$ and (\ref{2p}) to obtain
\begin{align*}
0 = \text{Re} \langle  c_1 (X \invneg dA) \cdot \ph + c_2 (X^{\flat} \wedge dA) \cdot \ph, \ph \rangle -\frac{1}{n} \text{Re} \langle X \cdot D^A \ph, D^A \ph \rangle.
\end{align*}
The first scalar product vanishes as $\langle  (X \invneg dA) \cdot \ph, \ph \rangle \in i\R$ and $\langle (X^{\flat} \wedge dA) \cdot \ph, \ph \rangle=0$ by Proposition \ref{poll}. Thus, $0=V_{D^A\ph}$ from which in the Lorentzian case $D^A\ph = 0$ follows. It is clear that $\ph$ descends to a $Spin^c-$parallel spinor on the Riemannian factor.\\
\newline
$n$ is odd and $\alpha = (\omega_0)_{|V}$, where $V \subset \R^{2,n}$ is a pseudo-Euclidean subspace of signature $(2,n-1)$ and $\omega_0$ denotes the pseudo-K\"ahler form on $V$. In this case $Hol(M,c) \subset SU(1,(n-1)/2)$. As in \cite{leihabil} we conclude that there is locally a Lorentzian Einstein Sasaki metric $g$ (of negative scalar curvature) in the conformal class. Moreover, $V$ is unit timelike  Killing on this metric and belongs to the defining data of the Sasakian structure. It is known from \cite{boh} that there are geometric Killing spinors $\ph_i$ on $(M,g)$ with $V_{\ph_i}=V$. \\
\newline
$n$ is even and $\alpha=\omega_0$ is the pseudo-K\"ahler form on $\R^{2,n}$. This corresponds to conformal holonomy in $SU(1,n/2)$ and as known from \cite{leihabil} this is locally equivalent to having a Fefferman space in the conformal class on which a CCKS exists by the preceeding section.\\
\newline
$\alpha = (\omega_0)_{|W}$, where $W \subset \R^{2,n}$ is a pseudo-Euclidean subspace of even dimension and signature $(2,k)$, where $4 \leq k < n-2$ and $\omega_0$ denotes the pseudo-K\"ahler form on $W$. In this case, the conformal holonomy representation fixes a proper, nondegenerate subspace of dimension $\geq 2$ and is special unitary on the orthogonal complement. As shown in \cite{leihabil} this is exactly the case if locally there is a metric in the conformal class such that $(M,g)=(M_1 \times M_2,g_1 \times g_2)$, where the first factor is Lorentzian Einstein Sasaki. As mentioned before, there exists a geometric $Spin-$Killing spinor inducing $V$ on $M_1$.\\
\newline
Conversely, if one of the geometries from Theorem \ref{clar} together with a $Spin^c-$CCKS of mentioned type as in the Theorem is given, it follows that $V_{\ph}$ is normal conformal: In the first two cases, $\ph$ is parallel, for which $Ric(X) \cdot \ph = 1/2 (X \invneg dA) \cdot \ph$ is known (see \cite{mor}). We thus have that $(X \invneg dA)^{\sharp} \invneg \alpha_{\ph}^3 \in i\R \cap \R$. Proposition \ref{poll} yields that $V_{\ph} \invneg W^g = 0$. An analogous straightforward but tedious equation yields that $V_{\ph} \invneg C^g = 0$. In cases 3 and 5 of Theorem \ref{clar}, $V$ is normal as it is the Dirac current of a $Spin-$Killing spinor. Case 4 was discussed in the previous section and $V$ is normal by Theorem \ref{feff}.
\end{proof}

\begin{remark}
Examples of Lorentzian manifolds admitting a CCKS with non-normal Dirac current are easy to generate. In fact, \cite{CCKS1} shows that on any Lorentzian 4-manifold admitting a null-conformal vector field $V$, there exists -at least locally- a CCKS $\ph$ such that $V_{\ph}=V$. Given a generic null conformal vector field, it will not be normal conformal, and thus the preceeding Theorem does not apply. In fact (see \cite{lei}), if $V_{\ph}$ is null and normal conformal on a Lorentzian 4-manifold $(M,g)$, then $(M,g)$ is pointwise conformally flat or of Petrov type N.
\end{remark}
\begin{remark}
The classification for the Riemannian case differs from the $Spin-$case. For instance, every Riemannian 3-manifold with twistor spinor is conformally flat (see \cite{bfkg}), whereas there are recent examples of 3-dimensional non-conformally flat $Spin^c$-manifolds admitting CCKS, see \cite{gross}.
\end{remark}

\section{Low dimensions} \label{lodi}

In the following, all of our considerations will be \textit{local} on some open, simply-connected set $U \subset M$, i.e. we can always assume that there is a uniquely determined $Spin-$structure, the $S^1$-bundle is trivial and under this identification $A$ corresponds to a 1-form $A \in \Omega^1(U,i\R)$.

\subsection*{5-dimensional Lorentzian manifolds with a CCKS} \label{5l}
The spinor representation in signature $(1,4)$ is quaternionic, i.e. $\Delta_{1,4}:=\Delta_{1,4}^{\C}\cong \mathbb{H}^2$. However, we prefer to work with complex quantities. A Clifford representation on $\C^4$ is given by:
\begin{equation}\label{rie}
\begin{aligned}
e_1 &= \begin{pmatrix} & i & & \\ i & & & \\ & & & i \\ & & i & \end{pmatrix}, e_2 = \begin{pmatrix} & -1 & & \\ 1 & & & \\ & & & -1 \\ & & 1 &  \end{pmatrix}, e_3 = \begin{pmatrix} & & -i & \\ & & & i \\ -i & & & \\ & i & & \end{pmatrix}, e_4 = \begin{pmatrix} & & 1 & \\ & & & -1 \\ -1 & & & \\  &1 & & \end{pmatrix}, \\
e_0 &= \begin{pmatrix} 1 & &&\\ & -1 & & \\ & & -1 & \\ &&&1 \end{pmatrix} .
\end{aligned}
\end{equation}
The $Spin^c(1,4)$-invariant scalar product is given by $\langle v,w \rangle_{\Delta_{1,4}} = (e_0 \cdot v, w )_{\C^4}$. According to \cite{br}, the nonzero orbits of the action of $Spin^+(1,4) \cong Sp(1,1)$ on $\Delta_{1,4}$ are just the level sets of $v \mapsto \langle v,v \rangle_{\Delta_{1,4}} \in \R$.
Consider the spinor $u_1 = {\begin{pmatrix} 1 & 0 & 0 & 0 \end{pmatrix}}^T$. It satisfies
\begin{equation}\label{lor5}
\begin{aligned}
\langle u_1, u_1 \rangle_{\Delta_{1,4}}= 1,\text{ } V_{u_1} = e_0,\text{ } \alpha^2_{u_1} = e_1^{\flat} \wedge e_2^{\flat} + e_3^{\flat} \wedge e_4^{\flat},\text{ }\alpha^2_{u_1} \cdot u_1 = 2i \cdot u_1,
\end{aligned}
\end{equation}
whereas for the spinor $u_0 = {\begin{pmatrix} 1 & 1 & 0 & 0 \end{pmatrix}}^T \in \Delta_{1,4}$ we find 
\begin{equation}\label{lor50}
\begin{aligned}
\langle u_0, u_0 \rangle_{\Delta_{1,4}} = 0,\text{ } V_{u_0} = -2(e_0+e_2),\text{ } \alpha^2_{u_0} = 2(e_1^{\flat} \wedge (e_0^{\flat} + e_2^{\flat})),\text{ } \alpha^2_{u_0} \cdot u_0 = 0.
\end{aligned}
\end{equation}
Here, $\langle \alpha^2_u, \alpha \rangle_{1,4} = \frac{1}{i} \langle \alpha \cdot u, u \rangle_{\Delta_{1,4}} \in \R$ for $\alpha \in \Lambda^2_{1,4}$.\\
\newline
Let $(M^{1,4},g)$ be a Lorentzian $Spin^c$-manifold admitting a CCKS $\ph$ wrt. a $S^1-$connection $A$. Locally, around a given point, one has by omitting singular points either that $\langle \ph, \ph \rangle \neq 0$ or $\langle \ph, \ph \rangle \equiv 0$. In the first case let us assume that $\langle \ph, \ph \rangle > 0$. The analysis for CCKS of negative length is completely analogous. Thus, locally there are only two cases to consider:\\
\newline
\textit{Case 1:}\\ 
We may after rescaling the metric assume that $\ph \in \Gamma(S^g)$ is a CCKS with $\langle \ph, \ph \rangle \equiv 1$. Differentiating the length function and inserting the twistor equation yields that
\begin{align}
\text{Re} \langle X \cdot \ph, \eta \rangle \equiv 0, \label{tataff}
\end{align}
where $\eta := -\frac{1}{5} D^A \ph$. Let $s=(s_0,...,s_4)$ be a local orthonormal frame with local lift $\widetilde{s}$ to the spin structure such that locally $\ph = [\widetilde{s},u_1]$, $V_{\ph} = [s,e_0]$, $\alpha_{\ph}^2=[s,\alpha_{u_1}^2]$ and $\eta = \left[\widetilde{s},\begin{pmatrix} a_1+ia_2 \\ a_3+ia_4 \\ a_5+ia_6 \\ a_7+ia_8 \end{pmatrix} \right]$ for functions $a_1,...,a_8: U \rightarrow \R$. However, (\ref{tataff}) is satisfied iff
\begin{align*}
\eta = \left[\widetilde{s},{\begin{pmatrix} ia_2 & 0 & 0 & a_7+ia_8 \end{pmatrix}}^T \right].
\end{align*}
With this preparation, the conformal Killing equation for $\alpha_{\ph}^2$ (cf. the first line of (\ref{noco})) is straightforwardly calculated to be
\begin{align*}
\nabla^g_X \alpha_{\ph}^2 = \text{const.}\cdot \text{Im }\langle \ph, D^g \ph \rangle_{S^g} \cdot X^{\flat} \wedge V_{\ph}^{\flat}.
\end{align*}
In particular, $d\alpha_{\ph}^2 = 0$ and $\nabla^g_{V_{\ph}} \alpha_{\ph}^2 = 0$ in this scale. We now differentiate $\alpha_{\ph}^2 \cdot \ph = 2i \cdot \ph$ wrt. $V_{\ph}$ to obtain $\alpha_{\ph}^2 \cdot \nabla^A_{V_{\ph}} \ph = 2i \nabla^A_{V_{\ph}} \ph$. We multiply this equation by $V_{\ph}$. By (\ref{lor5}) the actions of $\alpha_{\ph}^2$ and $V_{\ph}$ on spinors commute. Furthermore, $V_{\ph} \cdot \nabla^A_{V_{\ph}} \ph = \eta$ by the twistor equation, leading to $\alpha_{\ph}^2 \cdot \eta=2i \eta$, i.e.
\begin{align*}
\alpha_{\ph}^2 \cdot \eta = 2i \left[\widetilde{s}, {\begin{pmatrix} ia_2 & 0 & 0 & -a_7-ia_8 \end{pmatrix}}^T \right] = 2i \eta = 2i \left[\widetilde{s},{\begin{pmatrix} ia_2 & 0 & 0 & a_7+ia_8 \end{pmatrix}}^T \right].
\end{align*}
Consequently, $D^A \ph = -5ia_2 \cdot \ph$, and thus $\nabla^A_X \ph = ia_2 \cdot X \cdot \ph$. However, it is proven in \cite{mor2} that this forces $a_2$ to be constant, i.e. $\ph$ is a $Spin^c$-Killing spinor or a $Spin^c$-parallel spinor. In the second case, $V_{\ph}$ is parallel as well and the metric splits into a product $(\R,-dt^2) \times(N,h)$ where the Riemannian 4-manifold $(N,h)$ admits a parallel $Spin^c$-spinor.  As moreover $\alpha^2_{\ph}$ descends to a parallel 2-form on $(N,h)$ of K\"ahler type, we conclude that $(N,h)$ is K\"ahler. Conversely, every K\"ahler $Spin^c$-manifold endowed with its canonical $Spin^c$-structure admits parallel spinors. If $\ph$ is an imaginary Killing spinor, Re$\langle \ph, D^A \ph \rangle = 0$, thus $V_{\ph}$ is a timelike Killing vector field of unit length satisfying $V_{\ph} \cdot \ph = \ph$. By a constant rescaling of the metric we may moreover assume that the Killing constant is given by $\pm \frac{i}{2}$. Then it is known from \cite{boh}, Thm. 46, that $V_{\ph}$  defines a (not necessarily Einstein) Lorentzian Sasaki structure. Conversely, by \cite{mor} every Lorentzian Sasaki structure endowed with its canonical $Spin^c$-structure admits imaginary $Spin^c$-Killing spinors.\\
\newline
\textit{Case 2:}\\ 
Let us turn to the case in which the CCKS satisfies $\langle \ph, \ph \rangle \equiv 0$. We first remark that in the $Spin$-case, i.e. $A \equiv 0$, this always implies that the spinor is locally conformally equivalent to a parallel spinor off a singular set (see \cite{lei}, Lemma 4.4.6). As we shall see, in the $Spin^c$-case something more interesting happens:
By passing to a dense subset we may assume that $\ph$ and $V_{\ph}$ have no zeroes. We locally rescale the metric such that $V_{\ph}$ becomes Killing,  which is by (\ref{ncf}) equivalent to 
\begin{align}\text{Re} \langle \ph,D^A \ph \rangle = 0 \label{dasda}
\end{align} in this metric $g$. In the chosen metric we also have (see (\ref{lor50})) that $\alpha_{\ph}^2 = r^{\flat} \wedge V_{\ph}^{\flat}$, where $r$ is a spacelike vector field of constant length orthogonal to $V_{\ph}$. Proceeding exactly as in the first case, i.e. locally evaluating the conditions $\text{Re} \langle X \cdot \ph, D^A \ph \rangle \equiv 0$ (resulting from differentiating the length function) and (\ref{dasda}) and inserting this into the definition (\ref{varfom}) for $\alpha_{\mp}^1$ and using the conformal Killing equation for $\alpha_{\ph}^2$ leads to 
\begin{align}
\alpha_{\mp}^1 = {\text{const.}}_1 \cdot d^*\alpha_{\ph}^2 = {\text{const.}}_2 \cdot \text{Im}\langle \ph, D^A \ph \rangle_{S^g} \cdot V_{\ph}^{\flat}, \label{esse}
\end{align} 
for some real nonzero constants. Conversely, a local computation shows that given a conformal Killing form $\alpha = r^{\flat} \wedge l^{\flat}$ such that $\alpha_{\mp} = f \cdot l^{\flat}$ and $r$ is spacelike, orthogonal to $l$ and of constant length, then $l$ has to be a Killing vector field. We summarize:
\begin{Proposition}
Given a CCKS $\ph \in \text{ker }P^A$ without zeroes such that $\langle \ph, \ph \rangle \equiv 0$, the conformal Killing form $\alpha_{\ph}^2$ satisfies $\alpha_{\ph}^2 = r^{\flat} \wedge V_{\ph}^{\flat}$ for a spacelike vector field $r$. There is a local metric $g \in c$ such that $d^*\alpha_{\ph}^2 = \text{const.}\cdot \text{Im}\langle \ph, D^A \ph \rangle_{S^g} \cdot V_{\ph}^{\flat}$. In this scale, $V_{\ph}$ is Killing.
\end{Proposition}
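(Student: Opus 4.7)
The assertion has two ingredients: the algebraic shape of $\alpha_{\ph}^2$, and the existence of a distinguished local conformal scale in which $V_{\ph}$ is Killing and $d^*\alpha_{\ph}^2$ takes the stated form. My plan is to handle the first by pure linear algebra, and the second by combining the transformation behavior of the spinor bilinears with the conformal Killing identity for $\alpha_{\ph}^k$.

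For the shape, I would argue pointwise using the orbit description of $Spin^+(1,4)$ on $\Delta_{1,4}$ recorded before (\ref{lor5}): the level sets of $\langle \cdot, \cdot \rangle_{\Delta_{1,4}}$ are the nonzero orbits, so a nowhere-vanishing isotropic spinor lies pointwise in the orbit of $u_0$. The equivariance statement in Proposition \ref{10}(3) then transports (\ref{lor50}) to each point, forcing $\alpha_{\ph}^2 = r^{\flat} \wedge V_{\ph}^{\flat}$ for some spacelike $r$ orthogonal to $V_{\ph}$; a smooth choice of $r$ follows locally by choosing a $Spin^c$-lift realizing $\ph = [\widetilde{s},u_0]$.

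For the conformal scale, observe that $V_{\ph}$ is a conformal vector field by (\ref{ncf}) with $k=1$, and becomes Killing precisely when $d^* V_{\ph}^{\flat} = 0$; via the identity $(n-k+1)\alpha_{\mp}^{k-1} = d^*\alpha_{\ph}^k$ and the defining formula (\ref{varfom}), this is equivalent to the vanishing of $\text{Re}\langle \ph, D^A \ph \rangle_{S^g}$. Since this quantity transforms by a derivative of the conformal factor (using the transformation rule for $D^A$ stated in the subsection on basic properties), it can be killed locally by solving a single ODE for $\sigma$, yielding a metric $g \in c$ in which (\ref{dasda}) holds and $V_{\ph}$ is Killing.

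Finally, in this gauge I would compute $\alpha_{\mp}^1$ directly from (\ref{varfom}). Differentiating $\langle \ph, \ph \rangle \equiv 0$ and inserting the twistor equation yields the auxiliary constraint $\text{Re}\langle X \cdot \ph, D^A \ph \rangle_{S^g} \equiv 0$ for every $X$; together with (\ref{dasda}), this reduces the spinor bilinear defining $\alpha_{\mp}^1$ to its imaginary part, which in the local frame realizing $\ph = [\widetilde{s},u_0]$ is easily seen to be a multiple of $\text{Im}\langle \ph, D^A\ph \rangle_{S^g} \cdot V_{\ph}^{\flat}$; applying $(n-k+1)\alpha_{\mp}^{k-1} = d^*\alpha_{\ph}^k$ for $k=2$ then gives the claimed formula for $d^*\alpha_{\ph}^2$. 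The main obstacle I expect is the final step: verifying that the surviving imaginary components in the frame computation really do align with $V_{\ph}^{\flat}$ and not with some other lightlike direction. This is a careful but elementary Clifford computation that uses both constraints simultaneously, and is the reason the explicit matrix realization (\ref{rie}) together with the concrete form (\ref{lor50}) must be invoked rather than purely invariant arguments.
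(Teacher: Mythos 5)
Your proposal follows essentially the same route as the paper's own argument: the pointwise orbit identification via (\ref{lor50}) for the shape $\alpha_{\ph}^2 = r^{\flat}\wedge V_{\ph}^{\flat}$, the local rescaling making $V_{\ph}$ Killing (equivalently $\text{Re}\langle \ph, D^A\ph\rangle_{S^g}=0$), and then evaluating $\alpha_{\mp}^1$ from (\ref{varfom}) using the constraint $\text{Re}\langle X\cdot\ph, D^A\ph\rangle_{S^g}\equiv 0$ obtained by differentiating the zero-length condition, together with $(n-k+1)\alpha_{\mp}^{k-1}=d^*\alpha_{\ph}^k$. The final frame/Clifford computation you flag as the remaining work is exactly the step the paper also leaves implicit ("proceeding exactly as in the first case"), so the proposal is correct and matches the paper's proof.
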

We will now prove that the converse is also true, i.e. we show:

\begin{Proposition}
Given a zero-free conformal Killing 2-form $\alpha = r^{\flat} \wedge l^{\flat} \in \Omega^{2}(M)$ where $r \in \mathfrak{X}(M)$ is spacelike and of unit length, $l \in \mathfrak{X}(M)$ is a orthogonal lightlike vector field such that $d^* \alpha = f \cdot l^{\flat}$, for some function $f$, then there exists locally $A \in \Omega^1(U,i\R)$ and a CCKS $\ph \in \Gamma(U,S^g_{\C})$ wrt. $A$ such that $\alpha_{\ph}^2 = \alpha$ and $f=\text{const.}\cdot \text{Im}\langle \ph, D^A \ph \rangle_{S^g}$.
\end{Proposition}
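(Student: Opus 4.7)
The plan is to reverse the computation of the previous paragraph: construct a local pseudo-orthonormal frame adapted to $\alpha$, define $\ph$ as a constant spinor in that frame, and use the $i\R$-valued gauge freedom of $A$ to absorb exactly those components of $\nabla^g \ph$ that would otherwise obstruct the twistor equation. This is the $Spin^c$-analogue of the $Spin$-construction in \cite{lei,leihabil}, where such a null twistor spinor exists only under the additional normality assumption $l \invneg W^g = l \invneg C^g = 0$; in our setting the $S^1$-connection is precisely the ingredient that permits this assumption to be dropped.

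Concretely, I would first pass to a conformal representative in which $l$ is Killing. The conformal Killing equation for $\alpha = r^{\flat}\wedge l^{\flat}$ together with $d^*\alpha = f\cdot l^{\flat}$ forces $l$ to be a conformal vector field, and solving a first-order ODE along its integral curves produces the required conformal factor; since $P^A$ is conformally covariant, any CCKS found in the new scale gives one in the original. Next, choose a local pseudo-orthonormal frame $(s_0,\ldots,s_4)$ with $s_1 = r$ and $l = \tfrac{1}{\sqrt{2}}(s_0+s_2)$, lift it to a section $\widetilde s$ of the local spin structure on a simply-connected neighbourhood $U\subset M$, and set $\ph := [\widetilde s, u_0]$ with $u_0$ the model spinor of (\ref{lor50}). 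Proposition \ref{10} then gives that $V_{\ph}$ and $\alpha_{\ph}^2$ are constant multiples of $l$ and $\alpha$ respectively; a single constant $U(1)$-rescaling of $\ph$ normalizes the proportionality.

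It then remains to determine $A$. Writing out the twistor equation for this candidate $\ph$ via (\ref{lofo}) yields, at each point, an identity in $\Delta_{1,4}$ of the schematic form
\[
\Omega(X)\cdot u_0 + \tfrac{1}{2} A(X)\,u_0 + \tfrac{1}{5} X\cdot D^g\ph + \tfrac{1}{10} X\cdot A^{\sharp}\cdot u_0 = 0,
\]
where $\Omega(X)$ collects the Levi-Civita connection $2$-forms acting on $u_0$. Expanding in a basis of $\Delta_{1,4}$ built from $\{u_0, e_j\cdot e_k\cdot u_0\}$, the algebraic identity $\alpha_{u_0}^2 \cdot u_0 = 0$ from (\ref{lor50}) and the resulting large stabilizer of $u_0$ in $Spin^+(1,4)$ should force every coefficient, except one $i\R$-valued linear functional in $X$, to be compensated automatically by the Killing equation for $l$ and the conformal Killing equation for $\alpha$; this residual functional is what I would take as the definition of $A$. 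The identification $f = \mathrm{const}\cdot \mathrm{Im}\langle\ph, D^A\ph\rangle_{S^g}$ then falls out by running the derivation of (\ref{esse}) in reverse with the newly constructed pair.

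The main obstacle is this last step: a priori the twistor equation imposes many more scalar constraints than the five real functions of $A$ can absorb, so the whole strategy rests on showing that all the spin-level obstructions collapse into a single $i\R$-valued $1$-form transverse only to the centre of $\mathfrak{spin}^c(1,4)$. I expect this collapse to be a consequence of the stabilizer structure of $u_0$ combined with the two geometric hypotheses on $\alpha$, but making the bookkeeping rigorous --- and in particular verifying that no hidden component obstructs solvability in an $X$-direction orthogonal to $l$ or $r$ --- will require the most care.
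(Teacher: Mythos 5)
Your strategy is the same as the paper's: choose a local pseudo-orthonormal frame adapted to $\alpha$ so that $\alpha=[s,\alpha_{u_0}^2]$, set $\ph=[\widetilde s,u_0]$ with $u_0$ the model spinor of (\ref{lor50}), and then determine $A$ from the twistor equation via the local formula (\ref{lofo}). However, the step you yourself flag as the ``main obstacle'' is precisely the content of the proof, and you leave it as an expectation rather than establishing it. It is not a consequence of the stabilizer structure of $u_0$ alone: the stabilizer of $u_0$ in $Spin^+(1,4)$ is only three-dimensional (isomorphic to $\R^3$), while $\mathfrak{spin}(1,4)$ is ten-dimensional, so for a generic frame the Levi-Civita terms $\Omega(X)\cdot u_0$ sweep out far more directions in $\Delta_{1,4}$ than the single imaginary-valued function $A(X)$, acting centrally, can absorb. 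The collapse happens only because the hypotheses on $\alpha$ impose an explicit list of linear relations on the connection coefficients $\omega_{ij}^k$: the paper translates the conformal Killing condition together with $d^*\alpha=f\cdot l^{\flat}$, using the characterization of conformal Killing forms from \cite{sem}, into the system following (\ref{tata}), reads off from it that $V_{\ph}$ is Killing in the given scale, and then checks by linear algebra that the twistor equation (\ref{jol}) holds if and only if the $A_i$ are the specific combinations of $\omega_{34}$ and $\omega_{14}$ given in (\ref{alo}). Without carrying out this verification (or an equivalent invariant argument), you have not shown solvability for $A$, nor the uniqueness of $A$, nor the identity $f=\text{const.}\cdot\text{Im}\langle\ph,D^A\ph\rangle_{S^g}$, which in the paper follows from (\ref{esse}) only once $\ph$ is known to be a CCKS.

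A secondary issue is your preliminary conformal rescaling to make $l$ Killing. It is unnecessary --- in the paper the Killing property of $V_{\ph}$ in the \emph{given} scale is a consequence of the $\omega$-relations derived from the hypotheses --- and it is not innocuous: under $\widetilde g=e^{2\sigma}g$ the divergence part $\alpha_{\mp}$ of a conformal Killing $2$-form picks up a term proportional to $\text{grad}\,\sigma\invneg\alpha$, which for $\alpha=r^{\flat}\wedge l^{\flat}$ contains an $r^{\flat}$-component unless $d\sigma(l)=0$; hence the hypothesis $d^*\alpha=f\cdot l^{\flat}$ is not manifestly preserved by the scale you propose to choose, and you would have to check compatibility of the two normalizations. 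Dropping that step and working directly in the given metric, as the paper does, avoids the problem.
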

\textit{Proof. }
There exists  a local orthonormal frame $s=(s_0,...,s_4)$ such that locally $\alpha = [s,\alpha_{u_0}^2]$. Defining $\ph=[\widehat{s},u_0]$, where $\widehat{s}$ is the local lift of $s$ to the spin structure shows that $\alpha_{\ph}^2 = \alpha$ and $\alpha = r^{\flat} \wedge V_{\ph}^{\flat}$. It is a purely  algebraic observation that $\ph$ is the up to local $U(1)$-action unique spinor field with this property, i.e. the surjective map
\begin{align*}
\Delta_{1,4}^{\C} \supset \{ \epsilon \mid \langle \epsilon, \epsilon \rangle_{\Delta} = 0 \} \mapsto  \alpha_{\epsilon}^2 \in \{ \alpha \mid \alpha = r^{\flat} \wedge l^{\flat}, ||r||_{1,4}^2 = 1, ||l||_{1,4}^2 = 0, \langle r,l\rangle_{1,4} = 0 \} \subset \Lambda^2_{1,4}
\end{align*}
is an $S^1$-fibration. Locally, the mentioned properties of $\alpha$ give a linear system of equations for the local connection coefficients $\omega_{ij}$. By the local formula (\ref{lofo}) the property of $\ph$ being a CCKS becomes a linear system of equations for the $\omega_{ij}$ and the $A_i :=A(s_i) \in C^{\infty}(U,i\R)$. A tedious but straightforward computation shows that there is a unique choice of $A$ such that these equations are indeed satisfied. In our chosen gauge one has that
\begin{equation} \label{alo}
\begin{aligned}
A_1 &= -2i \omega_{34}(s_1), A_2 = -2i \omega_{34}(s_2), A_3 = -2i (\omega_{34}(s_3) +\omega_{14}(s_3)),\\ 
A_4 &= -2i (\omega_{34}(s_4) +\omega_{14}(s_4)), A_0 = -2i \omega_{34}(s_0).
\end{aligned}
\end{equation}
In detail, this argument goes as follows: We have to show that if the locally given 2- form $\alpha = \alpha_{\ph}^2=[s,\alpha_{u_0}^2]=s_1^{\flat} \wedge (s_2^{\flat} + s_0^{\flat})$ where $s=(s_0,...,s_4)$ is a local ONB, is a conformal Killing 2-form such that $\alpha_{\mp} = \widetilde{f} \cdot V^{\flat}_{\ph} = \widetilde{f} \cdot [s, e_2^{\flat} + e_0^{\flat}] =\widetilde{f} \cdot (s_2^{\flat} + s_0^{\flat})$ for some function $\widetilde{f}$, then there is a uniquely determined $A \in \Omega^1(U,i\R)$ such that the spinor $\ph=[\widetilde{s},u_0]$ is a CCKS wrt. $A$. By (\ref{esse}) we then necessarily have that $\widetilde{f}$ is a constant multiple of $\text{Im }\langle D^A \ph, \ph \rangle_{S^g}$. \\
To this end, note that by the equivalent characterization of conformal Killing forms in \cite{sem}, the requirement on $\alpha$ is equivalent to
\begin{align}
X \invneg \nabla^g_Y \alpha + Y \invneg \nabla^g_X \alpha = f \cdot \left(X \invneg (Y^{\flat} \wedge V_{\ph}^{\flat}) + Y \invneg (X^{\flat} \wedge V_{\ph}^{\flat}) \right) \text{ }\forall X,Y \in TM, \label{tata}
\end{align}
where $f=\text{const.} \cdot \widetilde{f}$. We let $X,Y$ run over the local ONB $(s_0,s_1,s_2,s_3,s_4)$ and use
\begin{align*}\nabla^g_X (s^{\flat}_i \wedge s^{\flat}_k ) = \sum_j \epsilon_i \omega_{ij}(X) s^{\flat}_j \wedge s^{\flat}_k + \sum_j \epsilon_k \omega_{kj}(X) s^{\flat}_i \wedge s^{\flat}_j \end{align*}
to obtain that (\ref{tata}) is the following system of linear equations in $\omega_{ij}^k:= \epsilon_i \epsilon_j g(\nabla_{s_k} s_i, s_j)$:

{\allowdisplaybreaks\begin{align*} 
&\omega_{20}^1 = f, \omega_{23}^1+\omega_{30}^1=0, \omega_{24}^1+\omega_{40}^1=0,& \\
&\omega_{20}^2 =0, \omega_{12}^1-\omega_{10}^1=0, \omega_{24}^2+\omega_{40}^2 =0, \omega_{23}^2+\omega_{30}^2=\omega_{13}^1,& \\
&\omega_{13}^1 =-\omega_{20}^3, \omega_{23}^3+\omega_{30}^3=0, \omega_{24}^3+\omega_{40}^3=0,& \\
&\omega_{14}^1+\omega_{20}^4=0, \omega_{23}^4+\omega_{30}^4=0,& \\
&\omega_{20}^0=\omega_{10}^1-\omega_{12}^1, \omega_{23}^0+\omega_{30}^0=-\omega_{13}^1, \omega_{24}^0+\omega_{40}^0=-\omega_{14}^1, \omega_{20}^0=0,& \\
&\omega_{13}^2=0, \omega_{14}^2=0, \omega_{12}^2-\omega_{10}^2 = f, &\\
&\omega_{23}^2+\omega_{30}^2=-\omega_{20}^3, \omega_{13}^3=f, \omega_{14}^3=0, \omega_{12}^3-\omega_{10}^3=0,& \\
&\omega_{24}^2=-\omega_{20}^4, \omega_{14}^2=0, \omega_{13}^4=0, \omega_{14}^4=f, \omega_{12}^4-\omega_{10}^4=0,& \\
&\omega_{13}^2=\omega_{13}^0, \omega_{14}^0=\omega_{14}^2, \omega_{12}^0-\omega_{10}^0 = -f,& \\
&\omega_{23}^3+\omega_{30}^3=0, \omega_{13}^3=f, \omega_{14}^3=-\omega_{13}^4, \\
&\omega_{20}^3=\omega_{23}^0+\omega_{30}^0, \omega_{13}^0=\omega_{10}^3-\omega_{12}^3, \omega_{13}^3=f, \omega_{14}^3=0, \omega_{13}^0=0,&\\
&\omega_{24}^4+\omega_{40}^4=0, \omega_{14}^4=f,& \\
&\omega_{24}^0+\omega_{40}^0=\omega_{20}^4, \omega_{12}^4-\omega_{10}^4=-\omega_{14}^0, \omega_{14}^0=0,& \\
&\omega_{12}^0 - \omega_{10}^0 = 0.&
\end{align*}}
It follows that $V_{\ph}$ is a Killing vector field as this is equivalent to
$\epsilon_j (\omega_{2j}^i - \omega_{0j}^i) + \epsilon_i (\omega_{2i}^j-\omega_{0i}^j) = 0$.
On the other hand, by the local formula (\ref{lofo}), the twistor equation for $\ph$ is equivalent to
\begin{align}
\epsilon_i e_i \cdot \left(\sum_{k<l}\omega_{kl}^i e_k \cdot e_l \cdot u_0 + \frac{1}{2} A_i \cdot u_0 \right)= \epsilon_j e_j \cdot \left( \sum_{k<l} \omega_{kl}^i e_k \cdot e_l \cdot u_0 + \frac{1}{2} A_j \cdot u_0 \right), \label{jol}
\end{align}
for $0 \leq i < j \leq 4$ and $A_i :=A(s_i) :U \rightarrow i\R$. Inserting the above $\omega-$equations, it is pure linear algebra to check that (\ref{jol}) holds if and only if we set the local functions $A_i$ as given in (\ref{alo}).
$\hfill \Box$\\

We summarize our observations:

\begin{satz} \label{frt}
Let $\ph \in \Gamma(M,S^g)$ be a CCKS wrt. a connection $A$ on a Lorentzian 5-manifold $(M,g)$. Locally and off a singular set the metric can be rescaled such that exactly one of the following cases occurs:
\begin{enumerate}
\item The spinor is of nonzero length and a parallel $Spin^c$-spinor on a metric product $\R \times N$, where $N$ is a Riemannian 4-K\"ahler manifold with parallel spinor.
\item $\ph$ is an imaginary $Spin^c$-Killing spinor of nonzero length, its vector field $V_{\ph}$ is Killing and defines a Sasakian structure.
\item $|\ph|^2 \equiv 0$. The conformal Killing form $\alpha_{\ph}^2=:\alpha$ can be written as $\alpha= r^{\flat} \wedge l^{\flat}$, where $r \in \mathfrak{X}(M)$ is spacelike and $l \in \mathfrak{X}(M)$ is orthogonal to $r$ and lightlike. There is a scale in which $d^* \alpha = f \cdot l^{\flat}$ for some function $f$.
\end{enumerate}
Conversely, for all the geometries listed in 1.-3. there exists (in case 3. only locally) a $Spin^c$-structure, a $S^1-$connection $A$ and a CCKS $\ph \in \text{ker }P^A$. 
\end{satz}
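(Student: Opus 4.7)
The plan is to split the analysis according to the orbit type of $\varphi$ under the fibrewise action of $Spin^+(1,4) \cong Sp(1,1)$ on $\Delta_{1,4}$. Since these orbits are precisely the level sets of the real function $\epsilon \mapsto \langle \epsilon, \epsilon \rangle_{\Delta_{1,4}}$, locally off the singular set $\{x \mid \langle \varphi, \varphi \rangle_{S^g}(x) = 0, \ \varphi(x) \neq 0\}$ we either have $\langle \varphi, \varphi \rangle$ nowhere vanishing or $\langle \varphi, \varphi \rangle \equiv 0$. Each case can then be studied using the explicit algebraic models $u_1$ and $u_0$ together with their Dirac currents and 2-form bilinears (\ref{lor5}), (\ref{lor50}).

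In the first case, after a constant rescaling, I may assume $\langle \varphi, \varphi \rangle \equiv 1$. Differentiating the length function and invoking the twistor equation forces $\mathrm{Re}\langle X \cdot \varphi, D^A\varphi \rangle \equiv 0$; in a local frame adapted to the model spinor $u_1$ this cuts out an explicit two-dimensional subspace in which $D^A\varphi$ must lie. Combining with the algebraic identity $\alpha_\varphi^2 \cdot \varphi = 2i\varphi$ Clifford-multiplied by $V_\varphi$ (using the commutation $[\alpha_\varphi^2, V_\varphi] = 0$ on spinors that follows from (\ref{lor5})), I obtain $D^A\varphi = -5i a_2 \varphi$ for a real function $a_2$, so $\nabla^A_X \varphi = i a_2 \cdot X \cdot \varphi$. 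The result of \cite{mor2} then forces $a_2$ to be constant, so $\varphi$ is either $Spin^c$-parallel or an imaginary $Spin^c$-Killing spinor. In the first subcase $V_\varphi$ is a parallel timelike unit vector field, so $g$ splits as $-dt^2 \oplus h$, and the parallel 2-form $\alpha_\varphi^2$ descends to a Kähler form on $(N,h)$; the classification in \cite{mor} gives the statement of case 1. In the second subcase, after normalising the Killing constant to $\pm i/2$ and applying Theorem 46 of \cite{boh}, the field $V_\varphi$ is Killing of unit length and defines a Lorentzian Sasakian structure, giving case 2.

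In the case $\langle \varphi, \varphi \rangle \equiv 0$, I first rescale the metric so that $V_\varphi$ is Killing; by (\ref{ncf}) this is equivalent to $\mathrm{Re}\langle \varphi, D^A\varphi \rangle = 0$, and a standard conformal change always achieves this. The algebraic form of $u_0$ forces $\alpha_\varphi^2 = r^\flat \wedge V_\varphi^\flat$ with $r$ spacelike of constant length and orthogonal to $V_\varphi$. Computing $\alpha_{\mp}^1$ from the definition (\ref{varfom}) and using the Killing condition together with the reality constraint on $\langle X \cdot \varphi, D^A\varphi\rangle$ yields $\alpha_{\mp}^1 = \mathrm{const.}\cdot \mathrm{Im}\langle \varphi, D^A\varphi\rangle_{S^g} \cdot V_\varphi^\flat$, which is precisely $d^* \alpha_\varphi^2 / (n-k+1)$ up to the constant and so gives the structural equation in case 3.

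The main obstacle is the converse direction in case 3, where the spinor and the $S^1$-connection must be reconstructed from purely tensorial data. The first ingredient is the observation that the map from zero-length spinors in $\Delta_{1,4}^{\C}$ to 2-forms of the type $r^\flat \wedge l^\flat$ with the stated normalisations is a surjective $S^1$-fibration, so a spinor field $\varphi$ with $\alpha_\varphi^2 = \alpha$ exists and is unique up to local $U(1)$-gauge. The second and harder ingredient is to produce $A \in \Omega^1(U, i\mathbb{R})$ such that $\varphi \in \mathrm{ker}\,P^A$. Writing $\varphi = [\widetilde{s}, u_0]$ in an adapted frame, the local formula (\ref{lofo}) turns the twistor equation into a linear system for the connection coefficients $\omega_{ij}(s_k)$ and the components $A_i := A(s_i)$. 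The conformal Killing condition on $\alpha$ together with the prescribed form of $d^*\alpha$ already determines the relevant $\omega$-components (and, as a byproduct, re-derives that $V_\varphi$ is Killing); a tedious but elementary linear-algebra elimination then shows that setting the $A_i$ as in (\ref{alo}) solves all remaining equations uniquely. The converse in cases 1 and 2 is immediate from the fact that canonical $Spin^c$-structures on Kähler, respectively Lorentzian Sasakian, manifolds always carry $Spin^c$-parallel, respectively imaginary $Spin^c$-Killing, spinors whose Dirac currents have the stated properties (cf.\ \cite{mor, boh}).
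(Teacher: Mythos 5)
Your proposal is correct and takes essentially the same route as the paper's own proof: the same case split by the vanishing or nonvanishing of $\langle\ph,\ph\rangle$ using the model spinors $u_1$ and $u_0$, the same derivation of $D^A\ph=-5ia_2\,\ph$ and appeal to the quoted rigidity result to get the parallel/imaginary-Killing dichotomy (hence the product with a K\"ahler $4$-manifold resp.\ the Sasakian structure), the same treatment of the isotropic case yielding $\alpha_\ph^2=r^\flat\wedge V_\ph^\flat$ and $d^*\alpha_\ph^2=\mathrm{const.}\cdot\mathrm{Im}\langle\ph,D^A\ph\rangle_{S^g}\,V_\ph^\flat$, and the same converse in case 3 via the $S^1$-fibration of isotropic spinors over $2$-forms $r^\flat\wedge l^\flat$ together with the linear system for the connection solved uniquely by (\ref{alo}). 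The only minor imprecision is calling the normalisation $\langle\ph,\ph\rangle\equiv 1$ a ``constant rescaling'': it is a genuine non-constant conformal rescaling, legitimate by the conformal covariance of $P^A$, and this does not affect the argument.
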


\begin{remark}
It is easy to verify that the correspondence in the third part of this Theorem descends to parallel objects, i.e. on a Lorentzian $Spin^c$-manifold $(M^{1,4},g)$ there exists a $Spin^c$-parallel spinor of zero length if and only if there is a parallel 2-form of type $\alpha = l^{\flat} \wedge r^{\flat}$. This can be understood well from a holonomy-point of view: The $Spin^+(1,4)$-stabilizer of an isotropic spinor in signature $(1,4)$ is by \cite{br} isomorphic to $\R^3$, thus for a $\nabla^A$-parallel spinor of zero length, we have $Hol(\nabla^A) \subset S^1 \cdot \R^3 \cong SO(2) \ltimes \R^3 \subset SO^+(1,4)$ which is precisely the stabilizer of a 2-form $\alpha=r^{\flat} \wedge l^{\flat}$ of causal type as above under the $SO^+(1,4)-$action. That means, $Hol(\nabla^A)$ fixes an isotropic spinor iff $Hol(\nabla^g)$ fixes a 2-form $\alpha=r^{\flat} \wedge l^{\flat}$.
\end{remark}

Finally, the third case from Theorem \ref{frt} can with (\ref{esse}) be specialized and yields the following spinorial characterization of geometries admitting certain Killing forms, i.e. conformal Killing forms $\alpha$ with $d^* \alpha = 0$:
\begin{satz} \label{satz3}
On every Lorentzian 5-manifold admitting a Killing 2-form of type $r^{\flat} \wedge l^{\flat}$ for a spacelike vector field $r$ of unit length and a orthogonal lightlike vector field $l$, there exists (locally) a CCKS with $\langle \ph, D^A \ph \rangle_{S^g}= 0$ and vice versa.
\end{satz}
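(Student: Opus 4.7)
The plan is to read off Theorem \ref{satz3} as an immediate specialization of the third case of Theorem \ref{frt} via the identity (\ref{esse}), which expresses $d^{*}\alpha_{\ph}^{2}$ as a real multiple of $\text{Im}\langle \ph, D^{A}\ph\rangle_{S^{g}}\cdot V_{\ph}^{\flat}$. Once this dictionary is in place, imposing the \emph{Killing} condition on $\alpha_{\ph}^{2}$ (rather than the merely conformal Killing condition) is exactly the geometric translation of demanding that the imaginary part of $\langle \ph, D^{A}\ph\rangle_{S^{g}}$ vanish; combined with the real part (which is automatically zero whenever $V_{\ph}$ is Killing, cf.\ (\ref{dasda})), this recovers the full scalar vanishing.

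For the implication ``Killing $2$-form $\Rightarrow$ CCKS'', I would start with $\alpha = r^{\flat}\wedge l^{\flat}$ as given. It is in particular a conformal Killing $2$-form of the algebraic type hypothesized in the second Proposition of Section \ref{5l}, now with $d^{*}\alpha = 0\cdot l^{\flat}$. Applying that Proposition produces locally a connection $A\in\Omega^{1}(U,i\mathbb{R})$ and a CCKS $\ph$ with $\alpha_{\ph}^{2}=\alpha$; its proof shows $V_{\ph}\propto l$ is Killing in the given scale, which by (\ref{dasda}) forces $\text{Re}\langle \ph, D^{A}\ph\rangle_{S^{g}}=0$, while the stated conclusion of the Proposition with $f=0$ forces $\text{Im}\langle \ph, D^{A}\ph\rangle_{S^{g}}=0$. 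Combined, $\langle \ph, D^{A}\ph\rangle_{S^{g}}=0$.

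For the converse, starting with a CCKS $\ph$ satisfying $\langle \ph, D^{A}\ph\rangle_{S^{g}}=0$, the vanishing of the real part makes $V_{\ph}$ Killing, and the vanishing of the imaginary part, via (\ref{esse}), makes $d^{*}\alpha_{\ph}^{2}=0$. It remains to identify $\alpha_{\ph}^{2}$ as decomposable of the required form: reading off the local models (\ref{lor5}) and (\ref{lor50}) and the fact that, off a singular set, case 3 of Theorem \ref{frt} is the one producing a decomposable $\alpha_{\ph}^{2}=r^{\flat}\wedge V_{\ph}^{\flat}$ with $r$ spacelike of unit length and $V_{\ph}$ lightlike orthogonal to $r$. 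The nonzero-length cases 1 and 2 of Theorem \ref{frt} produce an $\alpha_{\ph}^{2}$ of K\"ahler rank-$2$ type instead, algebraically excluded by the factorization hypothesis; moreover case 2 is already incompatible with $\langle \ph, D^{A}\ph\rangle_{S^{g}}=0$ because an imaginary $Spin^c$-Killing spinor satisfies $D^{A}\ph = -5ic\,\ph$ with $c\in\mathbb{R}\setminus\{0\}$.

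The main technical obstacle is not this final bookkeeping but the preceding Proposition of Section \ref{5l}, which recovers $A$ and $\ph$ from the given conformal Killing form by solving an explicit linear system for the local connection coefficients $\omega_{ij}$ and the components $A_{i}$. Once that is granted, Theorem \ref{satz3} reduces to inspecting the proportionality constant relating $f$ to $\text{Im}\langle \ph, D^{A}\ph\rangle_{S^{g}}$ and matching it with the Killing condition $d^{*}\alpha=0$.
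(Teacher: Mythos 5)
Your proposal is correct and takes essentially the same route as the paper: Theorem \ref{satz3} is obtained there precisely by specializing the two Propositions on zero-length CCKS from Section \ref{lodi} (the third case of Theorem \ref{frt}) together with (\ref{esse}) and (\ref{dasda}), so that $d^*\alpha_{\ph}^2=0$ corresponds to $\text{Im}\langle \ph, D^A\ph\rangle_{S^g}=0$ while the Killing property of $V_{\ph}$ accounts for $\text{Re}\langle \ph, D^A\ph\rangle_{S^g}=0$. The only cosmetic slip is your remark that case 1 of Theorem \ref{frt} is ``algebraically excluded by the factorization hypothesis'' in the converse direction, where no such hypothesis is available (a nonzero-length parallel spinor also has $\langle \ph, D^A\ph\rangle_{S^g}=0$); the paper avoids this by reading the converse as the specialization of the zero-length case, so nothing essential differs.
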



\subsection*{Other signatures}
We investigate the CCKS-equation on manifolds of signature $(0,5), (2,2)$ and $(3,2)$. Together with the last section and the results from \cite{CCKS1,CCKS2,CCKS3} this yields a complete local description of geometries admitting CCKS in all signatures for dimension $\leq 5$. \\
\newline
\textbf{Signature (0,5)}\\
Let us start with the Riemannian 5-case. A Clifford representation of $Cl_{0,5}$ on $\Delta_{0,5}=\C^4$ is given by (\ref{rie}) where one has to replace the $e_0-$matrix by $-i \cdot e_0$ (see \cite{bfkg}). The $Spin^+(0,5) \cong Sp(2)$-invariant scalar product on $\Delta_{0,5}^{\C}$ is just the usual Hermitian product on $\C^4$ and the nonzero orbits of the $Spin^+(0,5)$-action on spinors are given by its level sets. Let us consider the spinor $u:= \begin{pmatrix} 1 & 0 & 0 & 0 \end{pmatrix}$. We have that $V_u = e_0$, $\alpha_u^2$ is the K\"ahler form on $\text{span}\{e_1,...,e_4\}$ and $\alpha_u^2 \cdot u = 2i \cdot u$
Now exactly the same considerations as carried out for spinors of nonzero length in the Lorentzian case in the previous section reveal the following:
\begin{satz} \label{50}
Let $\ph \in \Gamma(S^g)$ be a CCKS of constant length on a 5-dimensional Riemannian $Spin^c$-manifold $(M,g)$. Locally, exactly one of the following cases occurs:
\begin{enumerate}
\item There is a metric split of $(M,g)$ into a line and a 4-dimensional K\"ahler manifold on which $\ph$ is parallel.
\item After a rescaling of the metric, $\ph$ is a $Spin^c$-Killing spinor to Killing number $\pm \frac{1}{2}$. $V_{\ph}$ is a unit-norm Killing vector field which defines a Sasakian structure.
\end{enumerate}
Conversely, these geometries, equipped with their canonical $Spin^c$ structures, admit $Spin^c$-parallel/Killing spinors.
\end{satz}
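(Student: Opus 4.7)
The plan is to repeat Case~1 of Theorem~\ref{frt} with the Riemannian algebraic data, substituting the spacelike model spinor for the timelike one. Since $Spin^+(0,5) \cong Sp(2)$ acts transitively on the level sets of $\langle\cdot,\cdot\rangle_{\Delta_{0,5}}$, constant length of $\ph$ permits -- after a conformal rescaling -- the normalisation $\langle\ph,\ph\rangle \equiv 1$. I would then pick a local oriented orthonormal frame $s = (s_0,\ldots,s_4)$ with spin lift $\widetilde s$ so that $\ph = [\widetilde s, u]$ for $u = (1,0,0,0)^T$; the algebraic identities recorded before the theorem give $V_\ph = s_0$ of unit length, $\alpha_\ph^2$ the pullback of the K\"ahler form on $\mathrm{span}\{s_1,\ldots,s_4\}$, and pointwise $\alpha_\ph^2 \cdot \ph = 2i\,\ph$.

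Next I would extract two pointwise constraints on $D^A\ph$. Differentiating $\langle\ph,\ph\rangle \equiv 1$ gives $\mathrm{Re}\,\langle X\cdot\ph, D^A\ph\rangle = 0$ for all $X \in TM$. Differentiating the identity $\alpha_\ph^2\cdot\ph = 2i\ph$ along $V_\ph$ and using the conformal Killing form equation (\ref{ncf}) -- which in the unit-length scale and with the current spinor type collapses to $\nabla^g_{V_\ph}\alpha_\ph^2 = 0$, exactly as in the Lorentzian calculation -- yields $\alpha_\ph^2 \cdot \nabla^A_{V_\ph}\ph = 2i\,\nabla^A_{V_\ph}\ph$. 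Clifford-multiplying on the left by $V_\ph$, using that $V_\ph$ and $\alpha_\ph^2$ Clifford-commute on spinors (their algebraic models involve disjoint basis vectors), and invoking the twistor equation to replace $V_\ph\cdot\nabla^A_{V_\ph}\ph$ by $\tfrac{1}{5}D^A\ph$, one arrives at
\[ \alpha_\ph^2 \cdot D^A\ph = 2i\,D^A\ph. \]
Since the $2i$-eigenspace of the K\"ahler form inside $\Delta_{0,5}^\C$ is one-dimensional and spanned by $u$, this forces $D^A\ph = \lambda\,\ph$ with $\lambda \in C^\infty(M,\C)$. The reality constraint now reads $\mathrm{Re}(\bar\lambda\,\langle X\cdot\ph,\ph\rangle) = 0$; in Riemannian signature the identity $\langle X\cdot u, v\rangle + \langle u, X\cdot v\rangle = 0$ forces $\langle X\cdot\ph,\ph\rangle \in i\R$, so $\lambda$ must be real. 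Hence $\ph$ satisfies $\nabla^A_X\ph = -\tfrac{\lambda}{5}\,X\cdot\ph$, a real $Spin^c$-Killing equation with an a priori non-constant coefficient.

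At this stage I would invoke the Riemannian rigidity result for $Spin^c$-Killing spinors from \cite{mor2}, which forces $\lambda$ to be constant. If $\lambda = 0$, the spinor is $\nabla^A$-parallel; its Dirac current $V_\ph$ is a parallel unit vector field, producing a local metric split $(M,g) \cong (\R, dt^2) \times (N^4, h)$, and the descended parallel 2-form $\alpha_\ph^2$ is of K\"ahler type, so $N$ is K\"ahler. If $\lambda \neq 0$, a constant homothety normalises the Killing number to $\pm\tfrac{1}{2}$; $V_\ph$ is then a unit Killing vector field which, by \cite{mor}, defines a (not necessarily Einstein) Sasakian structure on $(M,g)$. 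The converse is immediate from results already in the excerpt: the canonical $Spin^c$-structure on a K\"ahler manifold admits a $\nabla^A$-parallel spinor by the proposition opening section~\ref{crg}, and \cite{mor} supplies the required $Spin^c$-Killing spinors on Sasakian 5-manifolds.

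The main obstacle is the algebraic pair of steps that pin $D^A\ph$ to the complex line $\C\cdot\ph$ and then identify the scalar $\lambda$ as a real function, together with the appeal to $Spin^c$-rigidity to upgrade $\lambda$ from a function to a constant; the rest is a routine transcription of the Lorentzian bookkeeping to positive-definite signature, with the key sign differences traced back to the Clifford identity $\langle X\cdot u, v\rangle + (-1)^p\langle u, X\cdot v\rangle = 0$ taken at $p=0$ instead of $p=1$.
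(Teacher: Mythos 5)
Your proposal is correct and follows essentially the same route as the paper, which proves Theorem \ref{50} by repeating verbatim the nonzero-length Lorentzian argument of section \ref{lodi}: constant length gives $\text{Re}\langle X \cdot \ph, D^A \ph\rangle = 0$, the conformal Killing equation gives $\nabla^g_{V_{\ph}}\alpha_{\ph}^2 = 0$, differentiating $\alpha_{\ph}^2 \cdot \ph = 2i\ph$ and using the twistor equation pins $D^A\ph$ to $\C\cdot\ph$ with real factor, and the rigidity result of \cite{mor2} plus the parallel/Killing dichotomy yields the two cases and the converse via \cite{mor}. Your use of the one-dimensionality of the $2i$-eigenspace of the K\"ahler form is only a mild streamlining of the paper's explicit component computation, not a different method.
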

Consequently, CCKS in signature $(0,5)$ locally equivalently characterize the existence of Sasakian structures or splits into a line and a K\"ahler 4-manifold in the conformal class.\\
\newline
\textbf{Signature (2,2)}\\
$Cl_{2,2} \cong \mathfrak{gl}(4,\R)$, and thus the complex representation of $Cl^{\C}_{2,2}$ on $\Delta_{2,2}^{\C}=\C^4$ arises as a complexification of the real representation
\begin{align*}
e_1=- \begin{pmatrix}  & &  & 1\\  & & 1 &\\  & 1 & &  \\ 1 & & & \end{pmatrix}, e_2=\begin{pmatrix}  & & 1 &\\  & &  &-1\\  1&  & &  \\  &-1 & & \end{pmatrix}, e_3= \begin{pmatrix}  & & -1 &\\  & &  &-1\\  1&  & &  \\  &1 & & \end{pmatrix}, e_4=\begin{pmatrix}  & &  & 1\\  & & -1 &\\  & 1 & &  \\ -1 & & & \end{pmatrix}
\end{align*}
of $Cl_{2,2}$ on $\Delta_{2,2}^{\R}=\R^4$. In this realisation, $Spin^+(2,2) \cong SL(2,\R) \times SL(2,\R)$ and the indefinite scalar product on $\Delta_{2,2}^{\C}$ given by $(e_1 \cdot e_2 \cdot v,w)_{\C^4}$ satisfies $\langle v,v \rangle_{\Delta} \in i\R$. The nonzero orbits of the $Spin^c(2,2)$-action on $\Delta_{2,2}^{\C,\pm}$ are given by the level sets of $\langle \cdot , \cdot  \rangle_{\Delta}$ where half spinors of zero length are precisely the real half spinors $\Delta_{2,2}^{\R,\pm}$, multiplied by elements of $S^1 \subset \C$. These algebraic observations lead to the following local analysis:\\
\newline
Let $(M^{2,2},g)$ be a $Spin^c(2,2)$-manifold admitting a nontrivial CCKS halfspinor $\ph \in \Gamma(S^g_{\C,\pm})$ wrt. the $S^1-$connection $A$. As we are only interested in local considerations, we may (after passing to open neighbourhoods of a given point and omitting a singular set) assume that $||\ph||^2 \equiv 0$ or $||\ph||^2 \neq 0$ everywhere. In the first case, the $Spin^c(2,2)$-orbit structure shows that $\ph$ can be chosen to be a local section of $S^g_{\R,\pm}$ (see also Proposition \ref{huuk}), i.e. there exists locally a pseudo-orthonormal frame $s=(s_1,...s_4)$ with lift $\widetilde{s}$ such that $\ph=[\widetilde{s},u_{0,\pm}]$ for some fixed spinor $u_{0,\pm} \in \Delta_{2,2}^{\R,\pm}$. As $\ph$ is a CCKS, we must have that 
\begin{align} \epsilon_i s_i \cdot \nabla^A_{s_i} \ph = \epsilon_j s_j \cdot \nabla^A_{s_j} \ph \in \Gamma(S^g_{\C,\pm} = S^g_{\R,\pm} \oplus iS^g_{\R,\pm}) \text{ }\forall 1 \leq i,j \leq 4. \label{sonn} \end{align}
Using the local formula (\ref{lofo}) and splitting (\ref{sonn}) into real and imaginary part, we arrive at $\epsilon_i A(s_i) \cdot s_i = \epsilon_j A(s_j) \cdot s_j$ which is possible only if $A \equiv 0$. Consequently, we are dealing with real $Spin^+(2,2)$ twistor half spinors which have been shown to be locally conformally equivalent to parallel spinors in \cite{lis}.\\
If, on the other hand, the spinor norm is nonvanishing, we may rescale the metric such that $||\ph||^2= \pm i$. Differentiating yields that $\text{Im }\langle X \cdot \ph, D^A \ph \rangle_{S^g} \equiv 0$ for $X \in TM$. It is purely algebraic to check that this is possible only if $D^A \ph = 0$. Moreover, $\alpha_{\ph}^2$ is a constant multiple of the pseudo-K\"ahler form, i.e. $\ph $ is a $Spin^c$-parallel half spinor on a K\"ahler manifold of signature $(2,2)$. We summarize:
\begin{satz} \label{22}
Let $\ph \in \Gamma(S^g_{\C,\pm})$ be a CCKS on a $Spin^c$-manifold $(M^{2,2},g)$. Locally, one of the following cases occurs:
\begin{enumerate}
\item $||\ph||^2 = 0$. This implies $A \equiv 0$. The spinor can be locally rescaled to a parallel pure spinor with normal form of the metric given in \cite{kath,br}.
\item There is a scale such that $||\ph||^2 = \text{const}$. In this case, $\ph$ is a parallel $Spin^c-$CCKS on a pseudo-K\"ahler manifold.
\end{enumerate}
In particular, CCKS half spinors of nonzero length equivalently characterize the existence of pseudo-K\"ahler metrics in the conformal class.
\end{satz}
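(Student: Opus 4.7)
The plan is to split the analysis along the dichotomy suggested by Case 1 and Case 2 of the statement, i.e.\ according to whether $\langle \ph, \ph \rangle_{S^g}$ vanishes identically or never vanishes on a small open set around a given point. After shrinking $M$ this dichotomy is exhaustive, because the nonzero $Spin^+(2,2)$-orbits on each half-spinor space $\Delta_{2,2}^{\C,\pm}$ are precisely the level sets of the indefinite scalar product, so continuity forces $\langle \ph, \ph \rangle$ to stay in a single level set generically.

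For the isotropic case, the orbit description gives more: every $v \in \Delta_{2,2}^{\C,\pm}$ with $\langle v, v\rangle = 0$ lies in the $S^1$-orbit of a real half-spinor in $\Delta_{2,2}^{\R,\pm}$. I would invoke the gauge invariance from Proposition \ref{huuk} to pick, locally, a gauge in which $\ph$ is a section of the real half-spinor bundle. The twistor equation is equivalent to $\epsilon_i s_i \cdot \nabla^A_{s_i} \ph = \tfrac{1}{n} D^A \ph$ for every frame vector $s_i$, so the right-hand side is in particular independent of $i$. Expanding the left-hand side with the local formula (\ref{lofo}), the Levi-Civita piece is a real section of $S^g_{\C,\mp}$ because $\ph$ and the connection coefficients $\omega_{kl}$ are real, whereas the $A(s_i)$-piece contributes the purely imaginary term $\tfrac{1}{2}\epsilon_i A(s_i) s_i \cdot \ph$. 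Subtracting the $i$-th from the $j$-th equation, the imaginary part of the difference gives $\epsilon_i A(s_i) s_i \cdot \ph = \epsilon_j A(s_j) s_j \cdot \ph$. Clifford multiplication by distinct frame vectors on a pure real spinor produces linearly independent vectors in $S^g_{\C,\mp}$, so this forces $A \equiv 0$ in the chosen gauge and reduces the problem to the $Spin$-twistor equation. At that point I would invoke the classification in \cite{lis}, which states that a real twistor half-spinor on a signature-$(2,2)$ manifold is locally conformally equivalent to a parallel pure spinor, whose normal forms are recorded in \cite{kath,br}.

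For the nonvanishing-norm case, conformal covariance of the twistor equation lets me rescale to $\langle \ph, \ph\rangle_{S^g} \equiv \pm i$. Differentiating this constant and feeding in the twistor equation produces the condition $\text{Im}\langle X \cdot \ph, D^A \ph\rangle_{S^g} = 0$ for every $X$. The next step is the purely algebraic claim that, for a half-spinor $\ph \in \Delta_{2,2}^{\C,\pm}$ with $\langle \ph, \ph\rangle = \pm i$, the linear map $\psi \mapsto (X \mapsto \text{Im}\langle X \cdot \ph, \psi\rangle)$ from $\Delta_{2,2}^{\C,\mp}$ to $(\R^{2,2})^*$ is injective; I would verify this by putting $\ph$ in normal form under the transitive $Spin^+(2,2) \cong SL(2,\R) \times SL(2,\R)$ action on the relevant level set and then computing explicitly in the Clifford representation given at the top of this subsection. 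Once injectivity is established, $D^A \ph \equiv 0$, and the twistor equation collapses to $\nabla^A \ph = 0$. The associated 2-form $\alpha_{\ph}^2$ is then parallel with respect to $\nabla^g$; its pointwise algebraic type, computed from the explicit $u$ with $\langle u, u\rangle = \pm i$, matches that of the K\"ahler form on a signature-$(2,2)$ vector space, so it induces a $\nabla^g$-parallel compatible almost complex structure, i.e.\ a pseudo-K\"ahler metric in the conformal class. The converse direction is the signature-$(2,2)$ version of the Proposition preceding Remark \ref{drr}: every pseudo-K\"ahler manifold in the canonical $Spin^c$-structure carries a $\nabla^A$-parallel spinor of constant nonzero norm.

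The step I expect to be the main obstacle is the algebraic injectivity claim in the second case. A priori the pairing $\text{Im}\langle X \cdot \ph, \cdot\rangle_{S^g}$ may have nontrivial kernel on the opposite chirality space, and a naive dimension count does not settle it because the target $(\R^{2,2})^*$ has the same real dimension as $\Delta_{2,2}^{\C,\mp}$ only after projecting away a single direction. I expect to handle this either by a direct calculation in the representation fixed at the beginning of the subsection, exploiting the specific form of the $Spin^+(2,2) \cong SL(2,\R) \times SL(2,\R)$-orbits on the level sets, or more invariantly by checking that $\ph$ determines the full spinor data up to $SL(2,\R)$-scaling on one factor, which pins down $D^A \ph$ by its pairing with $\ph$. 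The remaining steps in both cases are routine and mirror the analysis already carried out in signature $(1,4)$.
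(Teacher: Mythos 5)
Your proposal follows essentially the same route as the paper's proof: the same local dichotomy on $||\ph||^2$, the same use of Proposition \ref{huuk} to gauge $\ph$ into the real half-spinor bundle and split the twistor equation into real and imaginary parts to force $A \equiv 0$ (then citing \cite{lis}), and in the non-isotropic case the same rescaling to $||\ph||^2 = \pm i$, the algebraic conclusion $D^A\ph = 0$, and the identification of $\alpha^2_{\ph}$ with a pseudo-K\"ahler form. One minor caveat: the Clifford products $s_i \cdot \ph$ of distinct frame vectors with a pure real spinor need \emph{not} be pairwise linearly independent (in the standard normal form one has, e.g., $s_2 \cdot \ph = s_3 \cdot \ph$ because $s_2 - s_3$ lies in the annihilator plane of $\ph$), but the full system $\epsilon_i A(s_i)\, s_i \cdot \ph = \epsilon_j A(s_j)\, s_j \cdot \ph$ over all pairs $i \neq j$ still forces $A \equiv 0$, so your argument goes through after this small repair.
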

\textbf{Signature (3,2)}\\
A real representation of $Cl_{3,2}$ on $\Delta_{3,2}^{\R}=\R^4$ is given by
\begin{align*}
e_1 = \begin{pmatrix}  &  &  & -1 \\  &  & 1 &  \\  & 1 &  &  \\ -1 &  &  &  \end{pmatrix}, e_2 = \begin{pmatrix} -1 &  &  &  \\  & 1 &  &  \\  &  & -1 &  \\  &  &  & 1 \end{pmatrix}, e_3 = \begin{pmatrix}  & -1 &  &  \\ -1 &  &  &  \\  &  &  & -1 \\  &  & -1 &  \end{pmatrix}, \\
e_4 = \begin{pmatrix}  & 1 &  &  \\ -1 &  &  &  \\  &  &  & -1 \\  &  & 1 &  \end{pmatrix}, e_5 = \begin{pmatrix}  &  &  & 1 \\  &  & -1 &  \\  & 1 &  &  \\ -1 &  &  &  \end{pmatrix}. 
\end{align*} 
The complex representation on $\Delta_{3,2}^{\C} \cong \C^4$ arises by complexification and in this realisation $Spin^+(3,2) \cong Sp(2,\R)$ . The scalar product $\langle \cdot , \cdot \rangle_{\Delta^{\C}_{3,2}}$ is given by $\langle v,w \rangle_{\Delta^{\C}_{3,2}} = v^T J \overline{w}$, where $J= \begin{pmatrix} 0 & -I_2 \\ I_2 & 0 \end{pmatrix}$. Note that $\langle v,v \rangle_{\Delta^{\C}_{3,2}} \in i \R$. Orbit representatives for the action of $Spin^c(3,2)$ on $\Delta^{\C}_{3,2}$ are $u:= \begin{pmatrix} 1 & 0 & 0 & 0 \end{pmatrix}, u_0 := \begin{pmatrix} i & 1 & 0 & 0 \end{pmatrix}$ and $u_b:= \frac{1}{\sqrt2} \begin{pmatrix} 1 & 0 & ib & 0 \end{pmatrix}$, where $b \in \R \backslash \{0 \}$. One calculates that
\begin{equation} \label{blg}
\begin{aligned}
\langle u,u \rangle_{\Delta^{\C}_{3,2}} = 0, V_u = 0, \alpha_u^2=(e^{\flat}_3 - e^{\flat}_4 ) \wedge ( e^{\flat}_1 - e^{\flat}_5), \alpha_u^2 \cdot u = 0, \\
\langle u_0,u_0 \rangle_{\Delta^{\C}_{3,2}} = 0, V_{u_0}= 2 (e^{\flat}_1 - e^{\flat}_5), \alpha_{u_0}^2 = 2 e_4^{\flat} \wedge (e_1^{\flat} - e_5^{\flat}), \alpha_{u_0}^2 \cdot u_0 =0, \\
\langle u_1,u_1 \rangle_{\Delta^{\C}_{3,2}} = -i, V_{u_1}= - e_2^{\flat}, \alpha_{u_1}^2 = (-e_{1}^{\flat} \wedge e_3^{\flat} + e_4^{\flat} \wedge e_5^{\flat}), \alpha_{u_1}^2 \cdot u_1 =-2i \cdot u_1.
\end{aligned}
\end{equation}
Let $(M^{3,2},[g]=c)$ be a $Spin^c$-manifold with CCKS $\ph \in \text{ker }P^A$. In our local analysis, we have two cases to consider:
In the first case, we find a metric $g \in c$ such that $||\ph||^2 = \pm i.$ Using (\ref{blg}) it follows exactly as in the Lorentzian $(1,4)$-case that after constantly rescaling the metric, $\ph$ is either parallel, in which case by (\ref{blg}) the metric splits into a timelike line and a pseudo-K\"ahler manifold,  or a real or imaginary Killing spinor and $V_{\ph}$, which is a timelike unit Killing vector field, defines a pseudo-Sasakian structure.\\
In the second case, we have that $||\ph||^2 \equiv 0$. If $\ph$ is of orbit type $u \in \Delta_{3,2}^{\R}$ on an open set, it follows exactly as in the signature $(2,2)$ case that $A \equiv 0$, i.e. $\ph$ is an ordinary $Spin-$twistor spinor. The local analysis for this case has been carried out in {hs1,lis}. Thus, we are left with the case that $\ph$ is locally of orbit type $u_0$. However, the analysis of this case is completely analogous to the case of Lorentzian $Spin^c$ CCKS of nonzero length and one gets a one-to-one correspondence to certain conformal Killing forms. Carrying out these steps is straightforward and we arrive at
\begin{satz}
Let $(M^{3,2},g)$ be a $Spin^c-$manifold of signature $(3,2)$ and let $\ph \in \Gamma(S^g)$ be a CCKS wrt. a non-flat $S^1-$connection $A$ satisfying $||\ph||^2 \equiv 0$. Then there is a scale in which the conformal Killing form $\alpha_{\ph}^2$ writes as $\alpha_{\ph}^2 = r^{\flat} \wedge V_{\ph}^{\flat}$, where $r$ is a spacelike vector field of constant length, $V_{\ph}$ is orthogonal to $r$ and lightlike Killing and moreover \[d^* \alpha_{\ph}^2 = \text{const.} \cdot \text{Im}\langle D^A \ph, \ph \rangle_{S^g} \cdot V_{\ph}^{\flat}.\] Conversely, if $\alpha = r^{\flat} \wedge l^{\flat}$ is a conformal Killing form such that $r$ is of constant positive length, $l$ is lightlike and orthogonal to $r$ and $d^* \alpha = f \cdot l^{\flat}$ for some function $f$, then there exists a nontrivial $S^1-$connection $A$ and a up to $S^1-$action unique CCKS $\ph$ wrt. $A$ such that $\alpha_{\ph}^2 = \alpha$ and $f=\text{const.} \cdot \text{Im}\langle D^A \ph, \ph \rangle_{S^g}$.
\end{satz}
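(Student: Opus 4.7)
The strategy mirrors the zero-length case of Theorem~\ref{frt} for Lorentzian 5-manifolds. Of the two isotropic $Spin^c(3,2)$-orbits exhibited in (\ref{blg}), the orbit of $u$ is excluded by the hypothesis that $A$ is non-flat: an argument identical to the signature-$(2,2)$ analysis (splitting the CCKS equation into real and imaginary parts with respect to $\Delta_{3,2}^{\C} = \Delta_{3,2}^{\R}\oplus i\Delta_{3,2}^{\R}$) would force $A\equiv 0$ on any open set where $\ph$ takes values in that orbit. Hence $\ph$ lies pointwise in the $Spin^c(3,2)$-orbit of $u_0$.

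For the forward direction, I would choose a local pseudo-orthonormal frame $s=(s_1,\dots,s_5)$ with lift $\widetilde{s}$ so that $\ph_{|U} = [\widetilde{s},u_0]$. Then (\ref{blg}) immediately gives $V_{\ph}$ lightlike and $\alpha_{\ph}^2 = r^{\flat}\wedge V_{\ph}^{\flat}$ with $r$ spacelike of constant length and $r\perp V_{\ph}$. The $k=1$ case of (\ref{ncf}) identifies $V_{\ph}$ as a conformal vector field, and arranging $\mathrm{Re}\,\langle D^A\ph,\ph\rangle_{S^g}\equiv 0$ by a conformal rescaling makes $V_{\ph}$ Killing. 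Using $d^*\alpha_{\ph}^2 = (n-1)\alpha_{\mp}^1$ and computing $\alpha_{\mp}^1$ from (\ref{varfom}) on the orbit representative $u_0$ — only the components of $D^A\ph$ transverse to the $Spin^c$-orbit of $u_0$ contribute to the imaginary pairing with $X\cdot \ph$ — yields $\alpha_{\mp}^1 = \mathrm{const.}\cdot\mathrm{Im}\,\langle D^A\ph,\ph\rangle_{S^g}\cdot V_{\ph}^{\flat}$, as required.

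For the converse, the fibrewise map $\chi\mapsto \alpha_{\chi}^2$ from isotropic spinors (outside the real orbit) to 2-forms of type $r^{\flat}\wedge l^{\flat}$ with $r$ spacelike unit, $l$ lightlike and $l\perp r$ is a surjective $S^1$-fibration. This allows one to pick a local pseudo-orthonormal frame $s$ adapted to the given $\alpha$ (with $r$ a multiple of $s_4$ and $l$ a multiple of $s_1-s_5$, say), lift to $\widetilde{s}$, and define $\ph := [\widetilde{s},u_0]$, so that $\alpha_{\ph}^2 = \alpha$. It remains to construct $A$. Writing the CCKS equation via (\ref{lofo}) in this frame requires that
\[
\epsilon_i e_i\cdot\bigl(\tfrac{1}{2}\sum_{k<l}\omega_{kl}^i\, e_k e_l\, u_0 + \tfrac{1}{2} A_i\, u_0\bigr)
\]
be independent of $i$, where $\omega_{kl}^i := \epsilon_k\epsilon_l\, g(\nabla^g_{s_i}s_k,s_l)$ and $A_i := A(s_i)\in i\R$. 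The hypotheses on $\alpha$ translate, exactly as in the derivation of (\ref{tata}), into a linear system on the $\omega_{kl}^i$, and splitting the resulting CCKS equation along the real-imaginary decomposition of $\Delta_{3,2}^{\C}$ pins down the $A_i$ uniquely, in analogy with (\ref{alo}).

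The principal obstacle is this final linear-algebra step: one must carry out the explicit $Cl^{\C}_{3,2}$ matrix computation and verify consistency and unique solvability of the system for the $A_i$, which parallels but is not identical to the $(1,4)$ case because the real structure of $\Delta_{3,2}^{\C}$ and the stabiliser of $u_0$ under $Spin^c(3,2)$ differ from their Lorentzian counterparts. Uniqueness of $\ph$ up to local $S^1$-gauge follows from Proposition~\ref{huuk} combined with the $S^1$-fibration structure, and the identification $f = \mathrm{const.}\cdot\mathrm{Im}\,\langle D^A\ph,\ph\rangle_{S^g}$ is then forced by applying the forward direction to the constructed $\ph$.
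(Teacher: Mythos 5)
Your plan coincides with the paper's own treatment: the paper likewise rules out the real orbit type $u$ by the signature-$(2,2)$ argument (so a non-flat $A$ forces orbit type $u_0$) and then declares the remaining analysis ``completely analogous'' to the zero-length case of the Lorentzian $(1,4)$ section, i.e.\ the adapted-frame choice $\ph=[\widetilde{s},u_0]$, the rescaling making $V_{\ph}$ Killing, the identification of $d^*\alpha_{\ph}^2$, and the converse construction of $A$ from a linear system as in (\ref{alo}) together with the $S^1$-fibration argument for uniqueness. The explicit $Cl^{\C}_{3,2}$ matrix computation you flag as the remaining obstacle is also left implicit in the paper, which simply asserts that carrying out these steps is straightforward.
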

\textbf{Acknowledgement} The author gladly acknowledges support from the DFG (SFB 647 - Space Time Matter at Humboldt University Berlin) and the DAAD (Deutscher Akademischer Austauschdienst / German Academic Exchange Service). Furthermore, it is a pleasure to thank Jose Figueroa O Farrill for various discussions about mathematical phsycis. 
\small
\bibliographystyle{plain}
\bibliography{literatur}

\end{document}